\newcommand{\ds}{\displaystyle}
\newcommand{\Nb}{{\mathbb{N}}}
\newcommand{\Rb}{{\mathbb{R}}}
\newcommand{\Zb}{{\mathbb{Z}}}
\newcommand{\A}{{\mathcal{A}}}
\newcommand{\D}{{\mathcal{D}}}
\newcommand{\R}{{\mathcal{R}}}
\newcommand{\C}{{\mathcal{C}}}
\newcommand{\F}{{\mathcal{F}}}
\newcommand{\SN}{{\mathbb{S}^{N-1}}}
\newcommand{\Sd}{{\mathbb{S}^{d-1}}}
\newcommand{\LL}{{\mathcal{L}}}
\newcommand{\HH}{{\mathcal{H}}}
\newcommand{\M}{{\mathcal{M}}}
\newcommand{\med}{- \hskip -1em \int}
\newcommand{\res}{\mathop{\hbox{\vrule height 7pt width .5pt depth 0pt
\vrule height .5pt width 6pt depth 0pt}}\nolimits}
\def\hom{\text{hom}}
\def\dist{\text{dist}}
\let\e=\varepsilon
\let\O=\Omega
\let\G=\Gamma
\let\a=\alpha
\let\b=\beta
\let\d=\delta
\let\g=\gamma
\begin{document}

\markboth{J.-F. Babadjian \& V. Millot} {Homogenization of
variational problems in manifold valued $BV$-spaces}

\title{HOMOGENIZATION OF VARIATIONAL PROBLEMS\\ IN MANIFOLD VALUED $BV$-SPACES}
\author{Jean-Fran\c{c}ois BABADJIAN\footnote{{\it Current adress:} CMAP, Ecole Polytechnique, 91128 Palaiseau, France. 
{\it E-mail:} \texttt{babadjian@cmap.polytechnique.fr}}}

\address{Laboratoire Jean Kuntzmann\\
Universit\'e Joseph Fourier\\
BP 53\\
38041 Grenoble Cedex 9, France.\\
\emph{\tt{babadjia@imag.fr}}}

\author{Vincent MILLOT}

\address{Univesit\'e Paris Diderot - Paris 7\\
CNRS, UMR 7598 Laboratoire Jacques-Louis Lions\\
F-75005 Paris, France.\\
\emph{\tt{millot@math.jussieu.fr}}}

\maketitle

\begin{abstract}
{\bf Abstract.} This paper extends the result of \cite{BM} on the
homogenization of integral functionals with linear growth defined for
Sobolev maps taking values in a given manifold. Through a $\Gamma$-convergence analysis, we identify the homogenized energy
in the space of functions of bounded variation. It turns out to be finite for $BV$-maps with values in the manifold. The bulk and Cantor parts of the
energy involve the tangential homogenized density introduced in
\cite{BM}, while the jump part involves an homogenized surface density given by a geodesic type problem on the manifold.
\end{abstract}

\keywords{Homogenization, $\Gamma$-convergence, manifold valued
maps, functions of bounded variation.}

\ccode{Mathematics Subject Classification 2000: 74Q05; 49J45;
49Q20.}



\section{Introduction}

\noindent In this paper we extend our previous resut \cite{BM} concerning the
homogenization of integral functionals with linear growth involving manifold valued mappings.
More precisely, we are interested in energies of the form
\begin{equation}\label{mainfunct}\int_\O f\left(\frac{x}{\e},\nabla
u\right)dx\,,\quad u : \O \to \mathcal{M}\subset\Rb^d\,,\end{equation}
where $\O \subset \Rb^N$ is a bounded open set,
$f:\Rb^N \times \Rb^{d \times N} \to [0,+\infty)$ is a periodic integrand
in the first variable with linear growth in the second one, and $\M$ is a smooth submanifold. Our main goal is to
find an effective description of such energies as $\varepsilon\to 0$. To this aim we perform a $\Gamma$-convergence analysis
which is an appropriate approach to study asymptotics in variational problems (see \cite{DM} for a detailed description of this subject).
For energies with superlinear growth, the most general homogenization result has been obtained independently in \cite{Br,M} in the nonconstrained case, and in
\cite{BM} in the setting of manifold valued maps.

The functional  in \eqref{mainfunct} is naturally defined for maps in the Sobolev class $W^{1,1}$. However if one wants to apply
the Direct Method in the Calculus of Variations, it becomes necessary to extend the original energy to a larger class of functions (possibly singular) in which the existence
of minimizers is ensured. In the nonconstrained case, this class is exactly the space of functions of bounded variation and the problem of finding an
integral representation for the extension, the so-called {\it ``relaxed functional"},  has been widely invetigated, see {\it e.g.}, \cite{Serr,GMSbv,DMsc,AMT,AmPal,FR,ADM,FM,FM2,BFF}
and \cite{Bouch,DAG} concerning homogenization in $BV$-spaces.

Many models from material science involve vector fields taking their values into a manifold.
This is for example the case in the study of equilibria
for liquid crystals, in ferromagnetism or for magnetostrictive
materials.  It then became necessary to understand the
behaviour of integral functionals of the type (\ref{mainfunct})
under this additional constraint.
In the framework of Sobolev spaces, it was the object of
\cite{DFMT,AL,BM}. For $\varepsilon$ fixed, the complete analysis in the linear growth case has been performed in
\cite{AEL} assuming that the manifold is the unit sphere of $\Rb^d$.
Using a different approach, the arbitrary manifold case has been recently treated in
\cite{Mucci} where a further isotropy assumption on
the integrand is made. We will present in the Appendix the analogue result to \cite{AEL}
for a general integrand and a general manifold.

We finally mention that the topology of $\M$ does not play an important role here. This is in
contrast with a slightly different problem originally introduced in \cite{BCL,BBC}, where the starting
energy is assumed to be finite only for smooth maps. In this direction,
some recent results in  the linear growth case can be found in \cite{GM,GM1} where the study
is performed within  the framework of Cartesian Currents \cite{GMS}.
When the manifold $\M$ is topologically nontrivial, it shows the emergence in the relaxation process of non
local effects essentially related to the non density of
smooth maps (see \cite{B,BZ}).
\vskip5pt

Throughout this paper we consider a
compact and connected smooth submanifold $\M$ of $\Rb^d$ without
boundary. The classes of maps we are interested in are defined as
$$BV(\O;\M):=\big\{ u \in BV(\O;\Rb^d) : \; u(x) \in \M \text{ for $\LL^N$-a.e. }x \in \O\big\}\,,$$
and $W^{1,1}(\O;\M)=BV(\O;\M) \cap W^{1,1}(\O;\Rb^d)$. For a smooth $\M$-valued map, it is well known that first order derivatives belong to the tangent  space of $\M$, and this
property has a natural extension to $BV$-maps with values in $\M$, see Lemma \ref{manifold}.
\vskip5pt

The function $f : \Rb^N \times \Rb^{d \times N} \to [0,+\infty)$ is
assumed to be a Carath\'eodory integrand satisfying
\begin{itemize}
\item[$(H_1)$] for every $\xi \in
\Rb^{d \times N}$ the function $f(\cdot,\xi)$ is $1$-periodic, {\it
i.e.} if $\{e_1,\ldots,e_N\}$ denotes the canonical basis
of $\Rb^N$, one has $f(y+e_i,\xi)=f(y,\xi)$ for every $i=1,\ldots,N$ and $y \in \Rb^N$;\\
\item[$(H_2)$] there exist $0<\a \leq \b < +\infty$
such that
$$\a |\xi| \leq f(y,\xi)\leq \b(1+|\xi|) \quad \text{ for a.e. }y \in \Rb^N \text{ and all }
\xi \in \Rb^{d \times N}\,;$$
\item[$(H_3)$]there exists $L>0$ such that
$$|f(y,\xi)-f(y,\xi')| \leq L |\xi-\xi'|\, \quad \text{ for a.e. }y \in \Rb^N \text{ and all }
\xi,\, \xi' \in \Rb^{d \times N}\,.$$
\end{itemize}
For $\e>0$, we define the functionals $\F_\e:L^1(\O;\Rb^d) \to
[0,+\infty]$ by
$$\F_\e(u):=\begin{cases} \ds \int_\O f\left(\frac{x}{\e},\nabla
u\right) dx & \text{if }u \in
W^{1,1}(\O;\mathcal{M})\,,\\[8pt]
+\infty & \text{otherwise}\,.
\end{cases}$$
\vskip5pt

We have proved in \cite{BM} the following representation result on
$W^{1,1}(\O;\M)$.

\begin{theorem}[\cite{BM}]\label{babmilp=1}
Let $\M$ be a compact and connected smooth submanifold of $\Rb^d$
without boundary, and  $f:\Rb^N \times \Rb^{d \times N} \to
[0,+\infty)$~be a Carath\'eodory function satisfying $(H_1)$ to
$(H_3)$. Then the family $\{\F_\e\}_{\e>0}$ $\G$-converges for the
strong $L^1$-topology at every $u \in W^{1,1}(\O;\M)$ to $\F_{\rm
hom} : W^{1,1}(\O;\M) \to [0,+\infty)$, where
$$\F_{\rm hom}(u):= \int_\O Tf_{\rm hom}(u,\nabla u)\, dx\,,$$
and $Tf_{\rm hom}$ is the tangentially homogenized energy density
defined for every $s\in \M$ and $\xi\in [T_s(\M)]^N$ by
\begin{equation}\label{Tfhom}
Tf_{\rm hom}(s,\xi)=\lim_{t\to+\infty}\inf_{\varphi} \bigg\{
\med_{(0,t)^N} f(y,\xi+ \nabla \varphi(y))\, dy :  \varphi \in
W^{1,\infty}_0((0,t)^N;T_s(\mathcal{M})) \bigg\}.
\end{equation}
\end{theorem}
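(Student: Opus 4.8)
\emph{Proof plan.} The strategy is the classical two-step $\G$-convergence scheme — a lower bound along arbitrary converging sequences, then the construction of a recovery sequence — preceded by a study of the density $Tf_{\rm hom}$. For the latter, fix $s\in\M$ and $\xi\in[T_s(\M)]^N$: the competitors in \eqref{Tfhom} vanish on $\partial(0,t)^N$ and can be glued across translated cubes, so the set function $A\mapsto\inf\{\int_A f(y,\xi+\nabla\varphi)\,dy:\varphi\in W^{1,\infty}_0(A;T_s(\M))\}$ is subadditive; together with the $1$-periodicity of $f$ from $(H_1)$, a standard almost-subadditivity argument of periodic homogenization shows that the limit in \eqref{Tfhom} exists and equals the infimum over $t>0$. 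Then $(H_2)$ and $(H_3)$ transfer to $Tf_{\rm hom}$, giving $\a|\xi|\le Tf_{\rm hom}(s,\xi)\le\b(1+|\xi|)$ and $L$-Lipschitz dependence on $\xi$; since $\M$ is smooth and compact, the tangent spaces $T_s(\M)$ and the associated orthogonal projections $P_s$ depend smoothly on $s$, which promotes this to joint continuity of $(s,\xi)\mapsto Tf_{\rm hom}(s,\xi)$. In particular $\F_{\rm hom}$ is well defined and finite on $W^{1,1}(\O;\M)$, recalling that $\nabla u(x)\in[T_{u(x)}(\M)]^N$ for $\LL^N$-a.e.\ $x$ by Lemma \ref{manifold}.

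\emph{Lower bound.} Let $u\in W^{1,1}(\O;\M)$ and $u_\e\to u$ in $L^1$; assume $\liminf_\e\F_\e(u_\e)<+\infty$ and, along a subsequence, that $\F_\e(u_\e)$ converges, $u_\e\in W^{1,1}(\O;\M)$, and $f(\cdot/\e,\nabla u_\e)\LL^N\wto\mu$ weakly-$*$ as measures on $\O$. By $(H_2)$ the $u_\e$ are bounded in $BV$, so $\nabla u_\e\LL^N\wto\nabla u\LL^N$. Writing $\mu=g\LL^N+\mu^s$ with $g\ge0$ and $\mu^s\ge0$, it suffices to show $g(x_0)\ge Tf_{\rm hom}(u(x_0),\nabla u(x_0))$ for $\LL^N$-a.e.\ $x_0$, since then $\lim_\e\F_\e(u_\e)=\mu(\O)\ge\F_{\rm hom}(u)$. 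I would get this by the blow-up method: pick $x_0$ that is a Lebesgue point of $g$ and $\nabla u$, a point of approximate differentiability of $u$, and a point of vanishing density for $\mu^s$, and set $s:=u(x_0)$, $\xi:=\nabla u(x_0)\in[T_s(\M)]^N$. Localizing and rescaling $u_\e$ at $x_0$ and comparing with the cell problem for $(s,\xi)$: using $f\ge0$ one discards the small set where $u_\e$ is far from $s$; on its complement the rescaled maps are $\M$-valued near $s$, so composing with $P_s$ — admissible up to controlled errors, since $\nabla u_\e\in[T_{u_\e}(\M)]^N$ by Lemma \ref{manifold} and $T_{u_\e}(\M)$ is close to $T_s(\M)$ there — and adjusting the boundary datum by a De Giorgi slicing produces genuine competitors for \eqref{Tfhom}; since $Tf_{\rm hom}(s,\xi)$ is the infimum of the rescaled cell quantities, passing to the limit in the scales and using the continuity of $Tf_{\rm hom}$ yields $g(x_0)\ge Tf_{\rm hom}(s,\xi)$.

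\emph{Recovery sequence.} Given $u\in W^{1,1}(\O;\M)$ and $\delta\in(0,1)$: since $u$ and $\nabla u$ are approximately continuous, a Scorza--Dragoni/Lusin argument lets me remove a set of measure $<\delta$ and split the rest into finitely many open pieces $\O_1,\dots,\O_m$ on each of which $u$ and $\nabla u$ oscillate by less than $\delta$; on $\O_j$ pick $(s_j,\xi_j)$ with $\xi_j\in[T_{s_j}(\M)]^N$ and $\varphi_j\in W^{1,\infty}_0((0,t)^N;T_{s_j}(\M))$, $t=t(\delta)$ large, that is $\delta$-almost optimal in \eqref{Tfhom} for $(s_j,\xi_j)$ — with $\|\varphi_j\|_{L^\infty}$ finite and $\med_{(0,t)^N}|\nabla\varphi_j|$ controlled via $(H_2)$ — extend it by $(0,t)^N$-periodicity and cut it off by a fixed cutoff $\chi_j$ vanishing near $\partial\O_j$; set $\psi_\e:=\varphi_j(\cdot/\e)\chi_j$ on $\O_j$ and $\psi_\e:=0$ elsewhere, so $\psi_\e\in W^{1,\infty}(\O;\Rb^d)$. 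Define $v_\e:=\pi_\M(u+\e\psi_\e)$ with $\pi_\M$ the smooth nearest-point projection near $\M$; for $\e$ small this is well defined, $v_\e\in W^{1,1}(\O;\M)$, and $v_\e\to u$ in $L^1$. Using the chain rule, $D\pi_\M(s)|_{T_s(\M)}=\mathrm{id}$, $\nabla u(x)\in[T_{u(x)}(\M)]^N$, the closeness of $T_{u(x)}(\M)$ to $T_{s_j}(\M)$ on $\O_j$, and the $C^1$-bound on $\pi_\M$, one gets $\nabla v_\e=\nabla u+\nabla_y\varphi_j(\cdot/\e)$ on the bulk of $\O_j$ up to an error that is $O(\e)$ plus $O(\delta)$ in $L^1$; combining $(H_3)$ with the Riemann--Lebesgue lemma for the oscillating integrand then yields $\limsup_\e\int_{\O_j}f(\cdot/\e,\nabla v_\e)\le\LL^N(\O_j)\med_{(0,t)^N}f(y,\xi_j+\nabla\varphi_j(y))\,dy+O(\delta)$, while the cutoff layers and the discarded set contribute $O(\delta)$ plus a term vanishing as the layer width $\to0$, by $(H_2)$ and absolute continuity of the integral. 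Summing over $j$, using $\delta$-optimality and the continuity of $Tf_{\rm hom}$, gives $\limsup_\e\F_\e(v_\e)\le\int_\O Tf_{\rm hom}(u,\nabla u)\,dx+C\delta$; a diagonal argument (first $\e\to0$, then the layer width, then $\delta$) concludes.

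\emph{Main obstacle.} The genuine difficulty, in both steps, is the interaction of the manifold constraint with rescaling: the admissible functions in \eqref{Tfhom} must be valued in the \emph{linear} space $T_s(\M)$ and vanish on the boundary, whereas the objects produced by blowing up an $\M$-valued map, or by localizing an arbitrary element of $W^{1,1}(\O;\M)$ — a space with no convenient dense class of piecewise affine $\M$-valued competitors — are only approximately tangent-valued. Making the projection onto $T_s(\M)$, the De Giorgi slicing and the boundary-layer cutoffs quantitatively harmless (crucially using Lemma \ref{manifold} to know that $\nabla u_\e$ already lies in the tangent bundle) and keeping all these errors uniform through the successive limits is the technical core; the rest is the standard blow-up and localization machinery for integral functionals of linear growth.
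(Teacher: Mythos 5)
This theorem is not proved in the present paper: it is quoted verbatim from the companion paper \cite{BM}, and the only indications the text gives about its proof are the references to ``\cite[Proposition~2.1]{BM}'' for the properties of $Tf_{\rm hom}$, to ``\cite[Lemma~5.2]{BM}'' for the blow-up/truncation lower bound, and, above all, the introduction in Subsection~\ref{thbe} of the auxiliary integrand $g(y,s,\xi)=f(y,\mathbb{P}_s\xi)+|\xi-\mathbb{P}_s\xi|$, together with the identity $Tf_{\rm hom}=g_{\rm hom}$ of Proposition~\ref{properties1}(i). So what you are really up against is \cite{BM}'s argument, not a proof contained here.

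Your plan follows the same high-level blueprint — cell-formula study, blow-up lower bound with a truncation step, recovery by a piecewise-constant partition plus rescaled correctors projected back to $\M$ — and I don't see a fatal flaw. But the route is genuinely different in one structural respect: you never introduce $g$. In \cite{BM} (and throughout this paper, e.g.\ the Cantor-part argument in Section~6), the constrained problem is systematically reduced to an unconstrained one: one shows that after truncation the energy of the $\M$-valued blow-up is controlled from below by $\int g(x/\e_n,\bar v_n,\nabla\bar v_n)$ for an unconstrained $\bar v_n$, and then invokes classical periodic homogenization for $g$, recovering $Tf_{\rm hom}$ through $Tf_{\rm hom}=g_{\rm hom}$. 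That device packages in one place all the error bookkeeping you are doing by hand: the coercivity term $|\xi-\mathbb{P}_s\xi|$ absorbs exactly the non-tangential component that your ``compose with $P_s$'' step discards, the Lipschitz estimates \eqref{moduluscont}--\eqref{lipg} encode the ``$T_{u_\e}(\M)$ close to $T_s(\M)$'' control you invoke verbally, and the unconstrained cell problem dispenses with the need to keep your competitors $T_s(\M)$-valued before projecting. Your direct approach, projecting onto $T_s(\M)$ for the lower bound and using $\pi_\M(u+\e\psi_\e)$ for the recovery, is more elementary in statement but accumulates the same errors term by term; each of your $O(\delta)$ estimates ($|P_u-P_{s_j}||\nabla\varphi_j|$ in $L^1$, the mismatch between $D\pi_\M(u+\e\psi_\e)$ and $D\pi_\M(u)$, the De Giorgi slicing penalty) must be uniform across the successive $\e\to0$, layer-width~$\to0$, $\delta\to0$ limits, and you rightly identify this as the crux. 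Two smaller points you should make explicit if you were to write this out: (i) you need to know that, for each integer $t$, the cell infimum in \eqref{Tfhom} is $\ge Tf_{\rm hom}(s,\xi)$ (this is the monotonicity along $t=mk$ produced by zero boundary data plus $\Zb^N$-periodicity, not just existence of the limit); (ii) the competitors are defined in $W^{1,\infty}_0((0,t)^N;T_s(\M))$, while blow-up/slicing produces $W^{1,1}$ competitors, so one needs the density argument (mollification inside the linear space $T_s(\M)$ plus $(H_3)$) to close the gap.
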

\vskip5pt

Note that the previous theorem is not really satisfactory since the domain of the $\G$-limit is obviously larger than the Sobolev
space $W^{1,1}(\O;\M)$. In view of the studies performed in
\cite{GM,Mucci}, the domain is exactly given by  $BV(\O;\M)$. 
Under the additional (standard)
assumption,
\begin{itemize}
\item[$(H_4)$]there exist $C>0$ and $0<q<1$ such that
$$|f(y,\xi)-f^\infty(y,\xi)| \leq C (1+|\xi|^{1-q}) \quad \text{ for a.e. }y \in \Rb^N \text{ and all }
\xi \in \Rb^{d \times N}\,,$$ where $f^\infty:\Rb^N \times \Rb^{d
\times N} \to [0,+\infty)$ is the recession function of $f$ defined
by
$$f^\infty(y,\xi):=\limsup_{t \to +\infty}\,\frac{f(y,t\xi)}{t}\,,$$
\end{itemize}
we have extended Theorem~\ref{babmilp=1} to
$BV$-maps, and our main result can be stated as follows.

\begin{theorem}\label{babmil2}
Let $\M$ be a compact and connected smooth submanifold of $\Rb^d$
without boundary, and let $f:\Rb^N \times \Rb^{d \times N} \to
[0,+\infty)$ be a Carath\'eodory function satisfying $(H_1)$ to
$(H_4)$. Then the family $\{\F_\e\}$ $\G$-converges for the strong
$L^1$-topology to the functional $\F_{\rm hom} : L^1(\O;\Rb^d) \to
[0,+\infty]$ defined by
$$\F_{\rm hom}(u):= \begin{cases} \ds
\begin{multlined}[9cm]
\,\int_\O Tf_{\rm hom}(u,\nabla u)\, dx +
\int_{\O\cap S_u}\vartheta_{\rm hom}(u^+,u^-,\nu_u)\, d\HH^{N-1}\,+ \\[-12pt]
+ \int_\O Tf^\infty_{\rm hom}\left(\tilde
u,\frac{dD^cu}{d|D^cu|}\right)\, d|D^cu|
\end{multlined}
& \text{if }u \in BV(\O;\M)\,,\\
& \\
\,+\infty & \text{otherwise}\,,
\end{cases}$$
where $Tf_{\rm hom}$ is given in (\ref{Tfhom}), $Tf_{\rm
hom}^\infty$ is the recession function of $Tf_{\rm hom}$ defined for
every $s \in \M$ and every $\xi \in [T_s(\M)]^N$ by
$$Tf_{\rm hom}^\infty(s,\xi):=\limsup_{t \to
+\infty}\,\frac{Tf_{\rm hom}(s,t\xi)}{t}\, ,$$
and for all $(a,b,\nu)
\in \M \times\M \times\SN$,
\begin{multline}\label{thetahom}
\vartheta_{\rm hom}(a,b,\nu) := \lim_{t\to+\infty}\inf_\varphi
\bigg\{\frac{1}{t^{N-1}} \int_{t\, Q_\nu} f^\infty(y,\nabla
\varphi(y))\, dy : \varphi \in W^{1,1}(tQ_\nu;\M)\,, \\
 \varphi=a \text{ on }\partial (tQ_\nu)\cap\{x\cdot \nu>0\}
\text{ and }\varphi=b \text{ on }\partial (tQ_\nu)\cap\{x\cdot \nu \leq 0\}\bigg\}\,,
\end{multline}
$Q_\nu$ being any open unit cube in $\Rb^N$ centered at the
origin with two of its faces orthogonal to $\nu$.
\end{theorem}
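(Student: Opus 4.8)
The plan is to prove the $\Gamma$-convergence by establishing separately the $\Gamma$-liminf inequality and the existence of a recovery sequence, working in the space $BV(\O;\M)$ equipped with the strong $L^1$-topology. The starting point is Theorem \ref{babmilp=1}, which already identifies the limit on $W^{1,1}(\O;\M)$; the new content is the extension to maps with jump and Cantor parts, together with the identification of the jump density $\vartheta_{\rm hom}$. Throughout, I would rely on the manifold structure through Lemma \ref{manifold} (derivatives are tangential), on the projection $\pi_\M$ onto $\M$ defined and smooth in a tubular neighborhood, and on standard $BV$-theory: the decomposition $Du = \nabla u \,\LL^N + (u^+-u^-)\otimes\nu_u\,\HH^{N-1}\res S_u + D^c u$, the blow-up method of Fonseca--Müller, and the integral representation / localization machinery for $\Gamma$-limits of functionals on $BV$ (treating the $\Gamma$-liminf as a measure via De Giorgi--Letta).

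\emph{Lower bound.} Fix $u\in BV(\O;\M)$ and $u_\e\to u$ in $L^1$ with $\sup_\e \F_\e(u_\e)<+\infty$; in particular $u_\e\in W^{1,1}(\O;\M)$ and $|Du_\e|(\O)$ is bounded by $(H_2)$, so up to a subsequence $Du_\e\overset{*}{\wto} Du$. Define the nonnegative Radon measures $\mu_\e := f(x/\e,\nabla u_\e)\,\LL^N\res\O$; passing to a further subsequence, $\mu_\e\overset{*}{\wto}\mu$. The goal is to show $\mu \geq Tf_{\rm hom}(u,\nabla u)\,\LL^N + Tf^\infty_{\rm hom}(\tilde u, dD^cu/d|D^cu|)\,|D^cu| + \vartheta_{\rm hom}(u^+,u^-,\nu_u)\,\HH^{N-1}\res(S_u\cap\O)$. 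I would differentiate $\mu$ with respect to each of the three mutually singular reference measures and blow up: at $\LL^N$-a.e.\ point the analysis reproduces the superlinear-type argument of \cite{BM} (replacing cubes by small balls, freezing $u$ at its Lebesgue value using the tangential minimization \eqref{Tfhom}); at $|D^cu|$-a.e.\ point one uses $(H_4)$ to replace $f$ by $f^\infty$ at the cost of an error controlled by $|\xi|^{1-q}$, which is $|D^cu|$-negligible in the blow-up since the Cantor part carries no atoms, and recognizes $Tf^\infty_{\rm hom}$; at $\HH^{N-1}$-a.e.\ jump point, after blow-up $u_\e$ converges to a two-valued profile jumping from $u^+(x_0)$ to $u^-(x_0)$ across the hyperplane $\nu_u(x_0)^\perp$, and the lower bound for $d\mu/d\HH^{N-1}$ is exactly the constrained cell problem defining $\vartheta_{\rm hom}$ once we pass from $f$ to $f^\infty$ via $(H_4)$ and use the scaling $y=tx$. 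The subadditivity and continuity properties of $\vartheta_{\rm hom}$ (to be proved first, via pasting of competitors and the geodesic-type structure) are needed to run the De Giorgi--Letta argument and to pass from cubes centered at $x_0$ to general shapes.

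\emph{Upper bound.} By lower semicontinuity of the $\Gamma$-limsup and a standard truncation/approximation it suffices to build recovery sequences for a dense class. First, for $u\in W^{1,1}(\O;\M)$ the recovery sequence is provided by Theorem \ref{babmilp=1}, giving $\Gamma\text{-}\limsup \F_\e(u)\leq \F_{\rm hom}(u)$ there. Next I would treat a \emph{pure jump} map taking finitely many values on a polyhedral partition: near each interface one inserts a transition layer at scale $\e t$ obtained by rescaling a near-optimal competitor for $\vartheta_{\rm hom}(a,b,\nu)$, glued to the constant values away from the interface and composed with $\pi_\M$ to keep values in $\M$; the energy of the layer is $\HH^{N-1}(\text{interface})\,\vartheta_{\rm hom}(a,b,\nu)+o(1)$ and the bulk contribution of the constants vanishes by $(H_2)$ since $\nabla u=0$ there up to the layer. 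The Cantor part is handled by the standard approximation of a general $BV$ function by maps whose singular part is purely of jump type with small total variation, using the continuity of $Tf^\infty_{\rm hom}$ and a diagonalization; finally a general $u\in BV(\O;\M)$ is approximated in energy by combining the three pieces, again post-composing with $\pi_\M$ to restore the constraint and controlling the extra term by $(H_3)$ and the uniform closeness to $\M$. A careful gluing (boundary values on cubes, matching of competitors) together with the lower semicontinuity of the $\Gamma$-limsup closes the argument.

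\emph{Main obstacle.} The delicate point is the jump part: both showing that the blow-up limit at a jump point forces competitors admissible for the cell problem \eqref{thetahom} — in particular controlling the lack of compactness coming only from an $L^1$ bound on $Du_\e$ (one must upgrade to genuine convergence of traces on the blow-up cube and discard the part of the energy escaping to the boundary, which is where the geodesic structure and the choice of cube $Q_\nu$ matter) — and, in the construction, producing transition layers that simultaneously have nearly optimal $f^\infty$-energy, satisfy the prescribed boundary values on a cube of the correct orientation, and take values exactly in $\M$ after projection without the projection blowing up the energy. Proving the fundamental properties of $\vartheta_{\rm hom}$ (finiteness via connectedness of $\M$, upper semicontinuity in $(a,b)$, subadditivity in $\nu$, and the BV-ellipticity needed for the measure-representation step) is a prerequisite that I would establish before either inequality.
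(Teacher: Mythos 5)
Your overall plan is sound and runs parallel to the paper on the lower bound (blow-up with respect to $\LL^N$, $|D^cu|$, and $\HH^{N-1}\res S_u$; using $(H_4)$ to pass from $f$ to $f^\infty$; truncation for the Cantor part; De Giorgi--Letta to get the measure property). The departure — and the spot where you should be more careful — is the upper bound, and a few technical ingredients that the constraint $u(x)\in\M$ forces on you.

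\emph{Upper bound: diffuse part.} You propose to approximate the Cantor part by maps whose singular part is of jump type and then diagonalize. This is not what the paper does and, as written, there is a gap: to make the jump energies $\vartheta_{\rm hom}(u_n^+,u_n^-,\nu_n)$ converge to the Cantor term $\int Tf^\infty_{\rm hom}(\tilde u,\, dD^cu/d|D^cu|)\,d|D^cu|$ you would need an asymptotic relation of the form $\vartheta_{\rm hom}(a,b,\nu)\sim Tf^\infty_{\rm hom}(s,(a-b)\otimes\nu)$ as $a,b\to s$, which is delicate on a manifold because the geodesic structure of competitors for $\vartheta_{\rm hom}$ does not obviously match the tangential cell problem defining $Tf_{\rm hom}$. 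The paper avoids this issue entirely: it observes that $\F(u,A)\leq\inf\{\liminf_n\int_A Tf_{\rm hom}(u_n,\nabla u_n): u_n\in W^{1,1}(A;\M),\ u_n\to u \text{ in } L^1\}$ by lower semicontinuity of the localized $\Gamma$-limit plus Theorem~\ref{babmilp=1}, and then invokes a \emph{relaxation theorem} on $BV(\O;\M)$ (Theorem~\ref{relax}, the manifold-valued analogue of Alicandro--Esposito--Leone) that represents the right-hand side as bulk $+$ jump $+$ Cantor with the recession density $Tf^\infty_{\rm hom}$; restricting to $\O\setminus S_u$ gives the diffuse upper bound without any explicit Cantor construction. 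You would be re-deriving a piece of that relaxation result anyway, so it is cleaner and safer to state it as a lemma and use it.

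\emph{Upper bound: jump part.} Your pure-jump construction (tiling interfaces with rescaled near-optimal competitors, then projecting) is the right idea and essentially matches the paper's Lemma~\ref{upbdsurf}. But the paper first derives an \emph{abstract} integral representation on partitions (Proposition~\ref{reppart}, via Ambrosio--Braides) producing an unknown density $K$, and only then proves $K\leq\vartheta_{\rm hom}$ by the tiling argument, with the passage to general $u\in BV(\O;\M)$ done via the Ambrosio--Mortola--Tortorelli approximation and the Lipschitz-in-$(a,b)$ bound on $\vartheta_{\rm hom}$. Doing this directly (without $K$) forces you to prove by hand the subadditivity and $BV$-ellipticity you list as prerequisites; the abstract route gets them for free.

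\emph{Two hidden but essential manifold ingredients.} First, the nearest-point projection $\pi_\M$ you invoke is \emph{not} enough for the gluing steps: when you interpolate two $\M$-valued competitors with cut-off functions, the intermediate map takes values in ${\rm co}(\M)$, typically far outside the tubular neighborhood where $\pi_\M$ is defined, and $\pi_\M$ would blow up the gradient near the focal set. The paper uses the Hardt--Lin/Federer--Fleming averaging projection (Proposition~\ref{proj}), which retracts all of ${\rm co}(\M)$ onto $\M$ with an $L^1$-gradient bound and fixes points already on $\M$. This is what makes both the localization lemma (Lemma~\ref{measbis}) and the slicing arguments go through, and without it your constructions stall. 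Second, for the jump lower bound, ``upgrading traces'' is not quite the difficulty; blowing up gives $w_k\to u_{s_0^+,s_0^-,\nu_0}$ only in $L^1(Q_{\nu_0})$, so $w_k$ is not an admissible competitor for the cell problem. The paper fixes this with a De Giorgi-type slicing: interpolating $w_k$ with an explicit geodesic profile $\psi_k(x)=\gamma(x\cdot\nu_0/\delta_k)$ over a frame of nested cubes, averaging over the slices to choose a good one, and then projecting back to $\M$ via Proposition~\ref{proj}. Finally, for $\vartheta_{\rm hom}$ to be usable at all, the paper first proves the limit in \eqref{thetahom} \emph{exists} by comparing with an auxiliary geodesic formulation $\tilde\vartheta_{\rm hom}$ (Propositions~\ref{limitsurfenerg}--\ref{limsurf2}); you acknowledge ``finiteness via connectedness'' but the well-posedness of the $\lim_{t\to\infty}$ requires this nontrivial comparison argument.

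In short: your lower-bound outline matches the paper; your upper-bound outline is a genuinely more constructive route that would, done carefully, require proving (rather than quoting) a relaxation result for $Tf_{\rm hom}$ in $BV(\O;\M)$ and would have to replace $\pi_\M$ by the averaging projection throughout — both nontrivial steps you should flag explicitly.
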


The paper is organized as follows. We first review in Section 2
standard facts about of manifold valued Sobolev mappings and
functions of bounded variation that will be used all the way
through. The main properties of the energy densities $Tf_{\rm hom}$ and
$\vartheta_{\rm hom}$ are  the object of
Section 3. A locality property of the $\Gamma$-limit is established in Section 4. The upper bound inequality
in  Theorem \ref{babmil2} is the object of Section~5.  The lower bound is obtained
 in Section 6 where  the proof of the theorem is completed.
Finally we state in the Appendix a relaxation
result for general manifolds and integrands which extends
\cite{AEL} and \cite{Mucci}.

\section{Preliminaries}

Let $\O$ be a generic
bounded open subset of $\Rb^N$. We write $\A(\O)$ for the family of
all open subsets of $\O$, and $\mathcal B(\O)$ for the
$\sigma$-algebra of all Borel subsets of $\O$. We also consider a
countable subfamily $\R(\O)$ of $\A(\O)$ made of all finite unions
of cubes with rational
edge length centered at rational points of $\Rb^N$.
Given $\nu \in \SN$, $Q_\nu$ stands for an open unit cube in $\Rb^N$
centered at the origin with two of its faces orthogonal to $\nu$ and
$Q_\nu(x_0,\rho):= x_0 + \rho \,Q_\nu$. Similarly $Q:=(-1/2,1/2)^N$
is the unit cube in $\Rb^N$ and $Q(x_0,\rho):= x_0 + \rho \,Q$. We
denote by $h^\infty$ the recession function of a generic scalar
function $h$, {\it i.e.},
$$h^\infty(\xi):=\limsup_{t\to+\infty}\,\frac{h(t\xi)}{t}\,.$$

The space of vector valued Radon measures in $\O$ with finite total
variation is denoted by $\M(\O;\Rb^m)$. We shall follow \cite{AFP}
for the standard notation on functions of bounded variation. We only
recall Alberti Rank One Theorem which states that for $|D^c u|$-a.e.
$x \in \O$, $$A(x):=\frac{dD^cu}{d|D^cu|}(x)$$ is a rank one matrix.

\bigskip

In this paper, we are interested in Sobolev and $BV$ maps taking
their values into a given manifold. We consider a connected smooth
submanifold $\M$ of $\Rb^d$ without boundary. The tangent space of
$\M$ at $s \in \M$ is denoted by $T_s(\M)$, ${\rm co}(\M)$ stands
for the convex hull of $\M$, and $\pi_1(\M)$ is the
fundamental group of $\M$.  

It is well known that if $u \in W^{1,1}(\O;\M)$, then  $\nabla u(x)
\in [T_{u(x)}(\M)]^N$ for $\LL^N$-a.e. $x \in \O$. The analogue
statement for $BV$-maps is given in Lemma  \ref{manifold} below.

\begin{lemma}\label{manifold}
For every $u \in BV(\O;\M)$,
\begin{align}
\label{aplimM}&\tilde u(x) \in \M\text{ for every } x \in \O\setminus S_u\,;\\[0.2cm]
\label{jumpM}&u^\pm(x) \in \M\text{ for every  }x \in J_u\,;\\[0.2cm]
\label{gradM}&\nabla u(x) \in [T_{u(x)}(\M)]^N \text{ for } \LL^N\text{-a.e. }x \in \O\,;\\[0.2cm]
\label{cantM}&\ds A(x):=\frac{dD^c u}{d|D^c u|}(x) \in [T_{\tilde u(x)}(\M)]^N \text{ for }|D^c u|\text{-a.e. }x \in \O\,.
\end{align}
\end{lemma}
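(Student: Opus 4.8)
\emph{Proof strategy.} The plan is to deduce all four assertions from a single tool. Since $\M$ is a compact smooth submanifold of $\Rb^d$ without boundary, the nearest point projection $\Pi$ onto $\M$ is well defined and smooth on a tubular neighborhood $U_\delta:=\{z\in\Rb^d:\dist(z,\M)<\delta\}$, and for every $s\in\M$ its differential $D\Pi(s)$ is the orthogonal projection of $\Rb^d$ onto $T_s(\M)$. First I would fix a cut-off $\chi\in C^\infty_c(U_\delta)$ equal to $1$ on a neighborhood of $\M$ and replace $\Pi$ by the map $\widetilde\Pi:=\chi\,\Pi$, extended by $0$ outside $U_\delta$, which is globally $C^1$ with bounded differential and still satisfies $\widetilde\Pi(s)=s$ and $D\widetilde\Pi(s)=D\Pi(s)$ for every $s\in\M$. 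Since $u(x)\in\M$ for $\LL^N$-a.e.\ $x$ and $\M$ is bounded, $u$ is essentially bounded and $\widetilde\Pi\circ u=u$ $\LL^N$-a.e.\ in $\O$.

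I would prove \eqref{aplimM} and \eqref{jumpM} directly, using only that $\M$ is closed. If $x\in\O\setminus S_u$ and $\tilde u(x)\notin\M$, then $r:=\dist(\tilde u(x),\M)>0$; since $|u(y)-\tilde u(x)|\ge r$ for $\LL^N$-a.e.\ $y$, the averages $\rho^{-N}\int_{B_\rho(x)}|u(y)-\tilde u(x)|\,dy$ stay bounded below by a positive constant as $\rho\to0$, contradicting the definition of the approximate limit. Running the same argument on the two half-balls $\{y\in B_\rho(x):\pm(y-x)\cdot\nu_u(x)>0\}$ yields $u^\pm(x)\in\M$ for every $x\in J_u$, which is \eqref{jumpM}.

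For \eqref{gradM} and \eqref{cantM} the plan is to apply the chain rule in $BV$ (see \cite[Theorem~3.96]{AFP}) to $v:=\widetilde\Pi\circ u$, which belongs to $BV(\O;\Rb^d)$ and whose derivative has diffuse part
\[
\nabla v\,\LL^N+D^cv=D\widetilde\Pi(u)\,\nabla u\,\LL^N+D\widetilde\Pi(\tilde u)\,D^cu,
\]
the $d\times d$ matrix $D\widetilde\Pi$ acting column by column on the $d\times N$ matrices $\nabla u$ and $dD^cu/d|D^cu|$. Since $v=u$ $\LL^N$-a.e., one has $Dv=Du$, so the absolutely continuous and Cantor parts of the two measures coincide. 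Matching the absolutely continuous parts and using $u(x)\in\M$ gives $\nabla u=D\widetilde\Pi(u)\,\nabla u=D\Pi(u)\,\nabla u$ $\LL^N$-a.e.; as $D\Pi(u(x))$ is the orthogonal projection onto $T_{u(x)}(\M)$, every column of $\nabla u(x)$ lies in $T_{u(x)}(\M)$, which is \eqref{gradM}. Matching the Cantor parts gives $D^cu=D\widetilde\Pi(\tilde u)\,D^cu$; since $|D^cu|$ is concentrated on $\O\setminus S_u$ by the definition of the Cantor part, and there $\tilde u(x)\in\M$ by \eqref{aplimM}, taking Radon--Nikodym derivatives with respect to $|D^cu|$ yields $A(x)=D\Pi(\tilde u(x))\,A(x)\in[T_{\tilde u(x)}(\M)]^N$ for $|D^cu|$-a.e.\ $x$, which is \eqref{cantM}. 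The main point to watch is technical rather than conceptual: one must ensure the $BV$ chain rule genuinely applies, which is exactly why $\Pi$ is first turned into a globally $C^1$ map with bounded gradient and why one exploits that $u$ is bounded because $\M$ is compact; once that is in place, everything reduces to identifying the canonical parts of $D(\widetilde\Pi\circ u)$ with those of $Du$.
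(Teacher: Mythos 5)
Your proof is correct and uses the same overall strategy as the paper's: a direct density argument for \eqref{aplimM}--\eqref{jumpM}, and the $BV$ chain rule \cite[Theorem~3.96]{AFP} applied to a composition with a smooth auxiliary map for \eqref{gradM}--\eqref{cantM}. The only substantive difference is the choice of that auxiliary map. The paper composes with the \emph{scalar} function $\Phi(s)=\chi(\delta^{-1}\dist(s,\M)^2)\dist(s,\M)^2$, which vanishes on $\M$ and has $\mathrm{Ker}\,\nabla\Phi(s)=T_s(\M)$; since $\Phi\circ u\equiv 0$, the chain rule gives $\nabla\Phi(u)\nabla u=0$ and $\nabla\Phi(\tilde u)A=0$, whence tangency. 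You instead compose with the \emph{vector-valued} truncated nearest-point projection $\widetilde\Pi=\chi\,\Pi$, which is the identity on $\M$ and whose differential at $s\in\M$ is the orthogonal projection $P_s$ onto $T_s(\M)$; since $\widetilde\Pi\circ u=u$ a.e., identifying the diffuse parts of $D(\widetilde\Pi\circ u)$ with those of $Du$ gives $P_{u}\nabla u=\nabla u$ and $P_{\tilde u}A=A$, whence tangency again. The two arguments are dual: the paper's $\Phi$ annihilates what is normal to $\M$, while your $\widetilde\Pi$ fixes what is tangent. Both are equally rigorous; the scalar version is perhaps marginally lighter in that one concludes $D(\Phi\circ u)\equiv 0$ outright and need not separately track the jump part, while your version makes the geometric mechanism (invariance under the tangential projection) more visible. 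One small point worth stating explicitly in your write-up is why $D\widetilde\Pi(s)=D\Pi(s)$ for $s\in\M$: you need $\chi\equiv 1$ on a neighborhood of $\M$ so that the $D\chi\otimes\Pi$ term in the product rule vanishes there; you do say "equal to $1$ on a neighborhood of $\M$," so this is fine, but it is the hinge of the computation.
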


\begin{proof}
We first show (\ref{aplimM}). By definition of the space
$BV(\O;\M)$, $u(y)\in\M$ for a.e. $y\in \O$. Therefore for any
$x\in\O\setminus S_u$, we have $|u(y)-\tilde u(x)|\geq
\text{dist}(\tilde u(x),\M)$ for a.e. $y\in\O$. By definition of
$S_u$, this yields $\text{dist}(\tilde u(x),\M)=0$, {\it i.e.},
$\tilde u(x)\in \M$. Arguing as for the approximate limit points,
one obtains (\ref{jumpM}).

Now it remains to prove \eqref{gradM} and \eqref{cantM}. We introduce the function $\Phi:\Rb^d\to\Rb$ defined by
$$\Phi(s)=\chi(\delta^{-1}\text{dist}(s,\M)^2)\, \text{dist}(s,\M)^2\,,$$
where $\chi\in \C_c^\infty(\Rb;[0,1])$ with $\chi(t)=1$ for $|t|\leq
1$, $\chi(t)=0$ for $|t|\geq2$, and $\delta>0$ is small enough so
that $\Phi \in \C^1(\Rb^d)$.  Note that for every $s \in \M$,
$\Phi(s)=0$ and
\begin{equation}\label{ker}
\text{Ker}\,\nabla\Phi(s)=T_s(\M)\,.
\end{equation}
By the Chain Rule formula in $BV$ (see, {\it e.g., }
\cite[Theorem~3.96]{AFP}), $\Phi\circ u\in BV(\O)$ and
\begin{align*}
D(\Phi\circ u)=&\,\nabla\Phi(u)\nabla u \,\mathcal{L}^N\res \, \O+\nabla \Phi(\tilde u) D^cu
+\big(\Phi(u^+)-\Phi(u^-)\big)\otimes\nu_u\,\mathcal{H}^{N-1}\res \, J_u\\
=&\,\nabla\Phi(u)\nabla u \,\mathcal{L}^N\res \, \O+\nabla \Phi(\tilde u)A|D^cu|\,,
\end{align*}
thanks to \eqref{jumpM}. On the other hand, $\Phi\circ u=0$ a.e. in
$\O$ since $u(x)\in\M$ for a.e. $x\in\O$. Therefore we have that
$D(\Phi\circ u)\equiv 0$. Since $\mathcal{L}^N\res \, \O$ and $|D^c
u|$ are mutually singular measures, we infer that
$\nabla\Phi(u(x))\nabla u(x)=0$ for $\mathcal{L}^N$-a.e.  $x\in\O$
and $\nabla \Phi(\tilde u(x))A(x)=0$ for $|D^cu|$-a.e. $x\in\O$.
Hence \eqref{gradM} and \eqref{cantM} follow from \eqref{ker}
together with \eqref{aplimM}.
\end{proof}

In \cite{B,BZ}, density results of smooth functions between
manifolds into Sobolev spaces have been established. In the
following theorem, we summarize these results only in $W^{1,1}$. Let
$\mathcal S$ be the family of all finite unions of subsets contained
in a $(N-2)$-dimensional submanifold of $\Rb^N$.

\begin{theorem}\label{density}Let $\D(\O;\M) \subset
W^{1,1}(\O;\M)$ be defined by
$$\D(\O;\M):=\begin{cases}
W^{1,1}(\O;\M)\cap\C^\infty(\O;\M) & \text{if $\,\pi_1(\M)=0$}\,,\\[10pt]
\big\{ u \in W^{1,1}(\O;\M)\cap  \C^\infty(\O \setminus \Sigma;\M)
\text{ for some } \Sigma \in \mathcal S \big\}
 & \text{otherwise}\,.
\end{cases}$$
Then $\D(\O;\M)$ is dense in $W^{1,1}(\O;\M)$ for the strong
$W^{1,1}(\O;\Rb^d)$-topology.
\end{theorem}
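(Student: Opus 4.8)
The plan is to prove Theorem~\ref{density} by combining the density results of \cite{B,BZ} with a localization and gluing argument, splitting according to whether $\pi_1(\M)=0$ or not. Since $W^{1,1}(\O;\M)$ is the relevant class and the statement only asserts strong $W^{1,1}(\O;\Rb^d)$-density of $\D(\O;\M)$, the core input is the known fact that maps in $W^{1,1}(\O;\M)$ can be approximated by maps that are smooth away from a singular set of codimension at least $2$ (and genuinely smooth when there is no topological obstruction, i.e.\ $\pi_1(\M)=0$, since the only possible obstruction to $W^{1,1}$-density of $\C^\infty(\O;\M)$ comes from nontrivial $\pi_1(\M)$ — the first homotopy group being the one relevant to the critical dimension $p=1$).

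First I would recall the structure of the argument from \cite{B,BZ}: given $u\in W^{1,1}(\O;\M)$, one first regularizes by a standard mollification combined with a nearest-point projection onto $\M$ (valid in a tubular neighborhood), which produces maps that are smooth except on a small set where $u$ concentrates energy; a careful covering/Fubini argument on a grid of small cubes, removing the bad cubes and redefining $u$ on their skeleta, reduces the singular set to a finite union of pieces of $(N-2)$-dimensional submanifolds, which is exactly the class $\mathcal S$. When $\pi_1(\M)=0$, any such codimension-$2$ singularity can be removed: near a point of $\Sigma$ the map restricted to a small linking $(N-2)$-sphere — or rather the relevant $1$-dimensional linking circle — is nullhomotopic, so one can fill it in smoothly, at the cost of an arbitrarily small amount of $W^{1,1}$-energy since the filling is supported in a small tube. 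I would then assemble these pieces: approximate $u$ first by $u_\eta$ smooth off $\Sigma_\eta\in\mathcal S$ with $\|u-u_\eta\|_{W^{1,1}}<\eta$, and in the simply connected case further approximate $u_\eta$ by a genuinely smooth map within $\eta$ in $W^{1,1}$, then conclude by a diagonal argument.

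The key steps, in order, are: (i) state precisely which density theorems of \cite{B,BZ} are being invoked and check their hypotheses (compact connected $\M$ without boundary, $p=1$, $\O$ a bounded open set in $\Rb^N$); (ii) in the case $\pi_1(\M)=0$, carry out (or cite) the removal of the topological singularities of codimension $2$, reducing to $\C^\infty(\O;\M)$; (iii) in the general case, verify that the singular sets produced are of the required form $\Sigma\in\mathcal S$, i.e.\ contained in finite unions of $(N-2)$-dimensional submanifolds; (iv) combine with a diagonalization to get strong $W^{1,1}$ convergence. One should also note that the smoothing must be done so that the approximants still take values exactly in $\M$ (hence the nearest-point projection, controlled because $\M$ is compact and smooth so it has a uniform tubular neighborhood).

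The main obstacle is step~(ii): showing that in the simply connected case the point singularities (codimension $2$) of a generic smooth-off-a-small-set approximation can be removed with vanishing $W^{1,1}$-cost. This is where the hypothesis $\pi_1(\M)=0$ is essential and where the geometry of $\M$ (existence of a uniform smooth nearest-point retraction onto $\M$ from a tubular neighborhood, plus a quantitative nullhomotopy/filling estimate) enters. However, since Theorem~\ref{density} is essentially a restatement of results already in the literature, I would keep the proof short: cite \cite{B,BZ} for the reduction to maps smooth off a set in $\mathcal S$, cite the relevant removability lemma in the $\pi_1(\M)=0$ case, and limit the original content to the bookkeeping needed to match the precise statements of those references to the formulation of $\D(\O;\M)$ used here, together with the diagonal argument producing the claimed strong $W^{1,1}$-density.
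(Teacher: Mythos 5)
The paper provides no proof of Theorem~\ref{density}: it is stated as a direct summary of the density results of B\'ethuel and B\'ethuel--Zheng in the case $p=1$, with the two references \cite{B,BZ} doing all the work. Your proposal reduces the statement to exactly those references and gives a (correct) sketch of what they establish, so it is consistent with the paper's treatment.
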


We now present a useful projection technique (taken from \cite{Dem}
for $\M=\Sd$). It was first introduced in \cite{HKL,HL}, and makes
use of an averaging device going back to \cite{FF}. We sketch the
proof for the convenience of the reader.

\begin{proposition}\label{proj}
Let $\M$ be a compact connected $m$-dimensional smooth submanifold
of $\Rb^d$ without boundary, and let $v \in W^{1,1}(\O;\Rb^d) \cap
\C^\infty(\O\setminus \Sigma;\Rb^d)$ for some $\Sigma\in\mathcal{S}$
such that $v(x) \in {\rm co}(\M)$ for a.e. $x \in \O$. Then there
exists $w \in W^{1,1}(\O;\M)$ satisfying $w=v$ a.e. in $\big\{x \in
\O\setminus\Sigma: v(x) \in \M \big\}$ and
\begin{equation}\label{1127}
\int_\O |\nabla w|\, dx \leq C_\star \int_\O |\nabla
v|\,dx\,,\end{equation} for some constant $C_\star >0$ which only
depends on $d$ and $\M$.
\end{proposition}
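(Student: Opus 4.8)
The plan is to proceed in two steps: first deform $v$ into a map with values in a fixed tubular neighborhood of $\M$, controlling the $W^{1,1}$ norm and without altering $v$ where it already takes values in $\M$; then compose with the smooth nearest point projection onto $\M$. Since $\M$ is a compact boundaryless smooth submanifold of $\Rb^d$, its dimension $m$ satisfies $m<d$, so $\M$ is $\LL^d$-negligible; we may also assume $m\ge1$, the case $m=0$ being trivial since $\M$ is then a point by connectedness. Fix $\rho_0>0$ small enough that the nearest point projection $\Pi$ onto $\M$ is well defined, smooth and Lipschitz on $\mathcal U_{\rho_0}:=\{y\in\Rb^d:\dist(y,\M)<\rho_0\}$, with $\Pi=\mathrm{id}$ on $\M$, and set $C_0:=\mathrm{Lip}(\Pi)$; recall that $\co(\M)$ is compact.

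The heart of the matter is Step 1, the averaging device of \cite{FF,HKL,HL}. In the model situation where $\M=\partial(\co(\M))$ (e.g.\ $\M=\Sd$, treated in \cite{Dem}), one takes for $\psi_a$, with $a$ ranging in a ball $A$ contained in the interior of $\co(\M)$, the map sending $y\neq a$ to the exit point on $\M$ of the ray issued from $a$ through $y$; this $\psi_a$ is smooth off $a$, equals the identity on $\M$, satisfies $|\nabla\psi_a(y)|\le C|y-a|^{-1}$, and $\med_A|y-a|^{-1}\,da$ is bounded uniformly for $y\in\co(\M)$ because $|\cdot|^{-1}$ is locally integrable in $\Rb^d$ ($d\ge2$). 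For a general $\M$, which need not lie on the boundary of its convex hull, one first fixes $\mathcal U_{\rho_0}$ and then deals with the compact set $\co(\M)\setminus\mathcal U_{\rho_0/2}\subset\Rb^d\setminus\M$ by covering it with finitely many small balls and projecting radially away from generic points of those balls, iterated in the spirit of Federer--Fleming so that at each stage the dimension of the part of the image still outside $\mathcal U_{\rho_0}$ drops. This produces, for $a$ ranging over a positive-measure family of parameters $A$, a map $\psi_a:\co(\M)\setminus Z_a\to\mathcal U_{\rho_0}$, with $Z_a$ closed, $\LL^d$-negligible and disjoint from $\M$, $\psi_a$ locally Lipschitz and equal to the identity on $\M$, whose gradient is dominated by a product of reciprocal-distance factors, one per projection performed; integrating $|\nabla(\psi_a\circ v)|$ over $\O$ and then averaging in the corresponding translation parameters --- each average being finite by local integrability of the reciprocal distance on bounded sets --- yields $\med_A\int_\O|\nabla(\psi_a\circ v)|\,dx\,da\le C_1\int_\O|\nabla v|\,dx$ for some $C_1=C_1(d,\M)$.

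Granting Step 1, I would finish as follows. Since $v$ is smooth on $\O\setminus\Sigma$ with values in $\co(\M)$, a Fubini argument using the translation invariance of $\LL^d$ shows that for a.e.\ $a$ the set $v^{-1}(Z_a)$ is $\LL^N$-negligible; on the complement of $\Sigma\cup v^{-1}(Z_a)$ the map $\psi_a\circ v$ is locally Lipschitz, hence differentiable $\LL^N$-a.e.\ on $\O$, with a chain-rule bound on its gradient. Combining this with the averaged inequality of Step 1, one finds a positive-measure set of $a\in A$ for which both $\LL^N(v^{-1}(Z_a))=0$ and $\int_\O|\nabla(\psi_a\circ v)|\,dx\le C_1\int_\O|\nabla v|\,dx$; fix one such $a$. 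A standard truncation argument --- cutting off near $\{\dist(v,Z_a)<\delta\}$, using that $\psi_a\circ v$ is bounded with an $L^1$ gradient, and letting $\delta\to0$ --- then yields $\psi_a\circ v\in W^{1,1}(\O;\Rb^d)$, with values in $\overline{\mathcal U_{\rho_0}}$ and $\psi_a\circ v=v$ wherever $v\in\M$ (as $\psi_a=\mathrm{id}$ on $\M$ and $Z_a\cap\M=\emptyset$). Setting $w:=\Pi\circ(\psi_a\circ v)$, we get $w\in W^{1,1}(\O;\M)$ as a Lipschitz map composed with a bounded $W^{1,1}$ map, $w=v$ a.e.\ on $\{x\in\O\setminus\Sigma:v(x)\in\M\}$ since both $\psi_a$ and $\Pi$ fix $\M$, and $\int_\O|\nabla w|\,dx\le C_0\int_\O|\nabla(\psi_a\circ v)|\,dx\le C_0C_1\int_\O|\nabla v|\,dx$, which is \eqref{1127} with $C_\star:=C_0C_1$ depending only on $d$ and $\M$.

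The main obstacle is Step 1: one has to organize the iterated radial projections so that the resulting $\psi_a$ simultaneously (i) sends \emph{all} of $\co(\M)$ into the fixed neighborhood $\mathcal U_{\rho_0}$, (ii) is the identity on $\M$, (iii) is locally Lipschitz off a closed set of codimension at least two, so that the averaging inequality is available and $v^{-1}(Z_a)$ is negligible for generic $a$, and (iv) obeys the uniform bound on the (iterated) average of the reciprocal-distance factors; reconciling these requirements is the delicate combinatorial-geometric part of the HKL/HL construction. A lesser technical point is the passage from the a.e.\ gradient bound to genuine $W^{1,1}$ membership of $\psi_a\circ v$, handled by the truncation argument and not automatic because $\psi_a$ has a non-Lipschitz singularity along $Z_a$.
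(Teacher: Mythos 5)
Your overall strategy is the one the paper uses: the Federer--Fleming/Hardt--Kinderlehrer--Lin averaging device applied to a projection with a codimension-two singular set, followed by a Fubini argument to pick a good parameter. The difference is in how the constraint ``$w=v$ wherever $v\in\M$'' is enforced, and the paper's handling is slicker and spares you exactly the step you flag as delicate. You try to build a whole family $\psi_a$ of projections, each of which is literally the identity on $\M$ (by keeping all the radial projection centers away from a tubular neighborhood of $\M$), and then compose with the nearest point projection $\Pi$; this forces you to re-derive a variant of \cite[Lemma~6.1]{HL} with the extra identity-on-$\M$ property, which is where your sketch becomes vague. The paper instead takes the retraction $\pi:\Rb^d\setminus X\to\M$ \emph{as a black box} from \cite[Lemma~6.1]{HL} (so $\pi|_\M=\mathrm{id}$, $\int_{B^d(0,R)}|\nabla\pi|<\infty$, and $\pi$ is smooth of full rank $m$ near $\M$), defines the averaging family simply by translation, $\pi_a(s):=\pi(s-a)$, and observes that $\pi_a|_\M:\M\to\M$ is, for small $a$, a diffeomorphism with uniformly Lipschitz inverse (Inverse Function Theorem plus compactness). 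The final map is then $w:=(\pi_a|_\M)^{-1}\circ\pi_a\circ v$: the post-composition by $(\pi_a|_\M)^{-1}$ undoes the displacement that translating $\pi$ causes on $\M$, so that $w=v$ a.e.\ on $\{v\in\M\}$ without any modification of the retraction itself. This also keeps the Fubini/Sard computation elementary: one integrates $\int_\O|\nabla(\pi_a\circ v)|$ over $a\in B^d(0,\sigma)$, swaps the order of integration, and uses the $L^1$ bound on $\nabla\pi$; no product of reciprocal-distance factors needs to be controlled by hand. Both routes lead to the same conclusion, but you should note that your ``main obstacle'' disappears if you relax the requirement $\psi_a|_\M=\mathrm{id}$ and instead correct with the inverse diffeomorphism at the end.
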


\begin{proof} According to \cite[Lemma 6.1]{HL} (which holds for
$p=1$), there exist a compact Lipschitz polyhedral set $X\subset\Rb^d$
of codimension greater or equal to $2$, and a locally Lipschitz map $\pi:\Rb^d \setminus X \to
\M$ such that
\begin{equation}\label{gradproj}
\int_{B^d(0,R)} |\nabla \pi(s)|\, ds <+\infty \quad \text{ for every
}R<+\infty\,.
\end{equation}
Moreover, in a neighborhood of $\M$ the mapping $\pi$ is smooth of
constant rank  equal to $m$.

We argue as in the proof of \cite[Theorem 6.2]{HL}. Let $B$ be an
open ball in $\Rb^d$ containing $\M \cup X$, and let $\d>0$ small
enough so that the nearest point projection on $\M$ is a well
defined smooth mapping in the $\d$-neighborhood of $\M$. Fix $\sigma <
\inf\{\d,\dist({\rm co}(\M),\partial B)\}$ small enough, and for $a
\in B^d(0,\sigma)$ we define the translates
$B_a:=a+B$ and $X_a:=a+X$,
and the projection $\pi_a:B_a \setminus X_a \to \M$ by
$\pi_a(s):=\pi(s-a)$. Since $\pi$ has full rank and is smooth in a neighborhood of $\M$,
by the Inverse Function Theorem the number
\begin{equation}\label{gradlambda}
\Lambda:=\sup_{a \in B^d(0,\sigma)} {\rm
Lip}\big({\pi_a}_{|\M}\big)^{-1}
\end{equation}
is finite and only depends on $\M$. Using Sard's lemma, one can show
that  $\pi_a \circ v \in W^{1,1}(\O;\M)$ for
$\LL^d$-a.e. $a \in B^d(0,\sigma)$. Then Fubini's theorem together
with the Chain Rule formula yields
\begin{multline*}
\int_{B^d(0,\sigma)}
\int_{\O} |\nabla (\pi_a \circ v)(x)|\, d\LL^N(x) \, d\LL^d(a)
\leq\,\\
\leq \int_\O |\nabla v(x)|
\left(\int_{B^d(0,\sigma)}|\nabla \pi(v(x)-a)| \, d\LL^d(a) \right)\, d\LL^N(x)
 \leq \,\\
 \leq  \left(\int_{B} |\nabla \pi(s)|\, d\LL^d(s)\right)
\left(\int_\O |\nabla v(x)|\,d\LL^N(x)\right)\,.
\end{multline*}
Therefore we can find $a \in B^d(0,\sigma)$ such that
\begin{equation}\label{gradpia}\int_\O |\nabla (\pi_a \circ v)|\, dx
\leq C\LL^d\left(B^d(0,\sigma)\right)^{-1} \int_\O |\nabla
v|\,dx\,,\end{equation} where we used (\ref{gradproj}). To conclude,
it suffices to set $w:=\big({\pi_a}_{|\M}\big)^{-1} \circ \pi_a \circ v$,
and (\ref{1127}) arises as a consequence of (\ref{gradlambda}) and
(\ref{gradpia}).
\end{proof}

\section{Properties of homogenized energy densities}

In this section we present the main properties of the energy
densities $Tf_\hom$ and $\vartheta_\hom$ defined in (\ref{Tfhom})
and (\ref{thetahom}). In particular we will prove that
$\vartheta_\hom$ is well defined in the sense that the limit in
(\ref{thetahom}) exists.

\subsection{The tangentially homogenized bulk energy}\label{thbe}

We start by considering the bulk energy density $Tf_{\rm hom}$ defined in \eqref{Tfhom}. 
As in \cite{BM} we first construct a new energy density
$g:\Rb^N\times\Rb^d\times\Rb^{d\times N}\to[0,+\infty)$ satisfying
$$g(\cdot,s,\xi)=f(\cdot,\xi)\quad \text{and}\quad g_{\rm hom}(s,\xi)=Tf_{\rm
hom}(s,\xi)\quad \text{for $s\in\M$ and $\xi\in[T_s(\M)]^N$}\,.$$
Hence upon extending $Tf_\hom$ by $g_\hom$ outside the set
$\big\{(s,\xi) \in \Rb^d \times \Rb^{d \times N}:\; s \in \M,\, \xi
\in [T_s(\M)]^N\big\}$, we will tacitly assume $Tf_\hom$ to be
defined over the whole $\Rb^d \times \Rb^{d \times N}$. We proceed
as follow. \vskip5pt

For $s \in \M$ we denote by $P_s:\Rb^d \to T_s(\M)$ the orthogonal
projection from $\Rb^d$ into $T_s(\M)$, and we set
$$\mathbf{P}_s(\xi):=(P_s(\xi_1),\ldots,P_s(\xi_N)) \quad \text{for
$\xi=(\xi_1,\ldots,\xi_N)\in\Rb^{d\times N}\,$.}$$ For $\delta_0>0$
fixed, let  $\mathcal U:=\big\{s
\in\Rb^d\,:\,\dist(s,\M)<\d_0\big\}$ be the $\d_0$-neighborhood of
$\M$. Choosing $\delta_0>0$ small enough, we may assume that the
nearest point projection $\Pi: \mathcal U \to \M$ is a well defined
Lipschitz mapping. Then the map $s \in \mathcal U \mapsto
P_{\Pi(s)}$ is Lipschitz. Now we introduce a cut-off function $\chi
\in \C^\infty_c(\Rb^d;[0,1])$ such that $\chi(t)=1$ if $\dist(s,\M)
\leq \delta_0/2$, and $\chi(s)=0$ if $\dist(s,\M) \geq 3\delta_0/4$,
and we define
$$\mathbb{P}_s(\xi):=\chi(s) \mathbf{P}_{\Pi(s)}(\xi)\quad \text{for $(s,\xi) \in \Rb^d \times \Rb^{d \times N}\,$.}$$
Given the Carath\'eodory integrand $f:\Rb^N \times \Rb^{d\times N}
\to [0,+\infty)$ satisfying assumptions $(H_1)$ to $(H_3)$, we
construct the new integrand $g: \Rb^N\times\Rb^d\times\Rb^{d\times
N}\to[0,+\infty)$ as
\begin{equation}\label{defig}
g(y,s,\xi):=f(y,\mathbb{P}_s(\xi))+|\xi-\mathbb{P}_s(\xi)|\,.
\end{equation}

We summarize in the following lemma the main properties of $g$.

\begin{lemma}\label{defg}
The integrand $g$ as defined in (\ref{defig})  is a Carath\'eodory function satisfying
\begin{equation}\label{idfg}
g(y,s,\xi)=f(y,\xi)\quad\text{and}\quad g^\infty(y,s,\xi)=f^\infty(y,\xi)\quad \text{for $s
\in \M$ and $\xi \in [T_s(\M)]^N\,$,}
\end{equation}
and
\begin{itemize}
\item[(i)] $g$ is $1$-periodic in the first variable;
\item[(ii)] there exist $0<\alpha'\leq \b'$ such that
\begin{equation}\label{pgrowth}
\alpha'|\xi|\leq g(y,s,\xi)\leq
\b'(1+|\xi|)\quad\text{for every $(s,\xi)\in \Rb^d \times \Rb^{d
\times N}$ and a.e. $y \in \Rb^N$}\,;\end{equation}
\item[(iii)] there exist $C>0$ and $C'>0$ such that
\begin{equation}\label{moduluscont}
|g(y,s,\xi)-g(y,s',\xi)| \leq C|s-s'|\; |\xi|\,,\end{equation}
\begin{equation}\label{lipg}
|g(y,s,\xi) - g(y,s,\xi')| \leq C'|\xi-\xi'|\end{equation} for every
$s$, $s' \in \Rb^d$, every $\xi \in \Rb^{d \times N}$ and a.e. $y
\in \Rb^N$;
\item[(iv)] if in addition $(H_4)$ holds, there  exists $0<q<1$ and $C''>0$ such that
\begin{equation}\label{grec}
|g(y,s,\xi) - g^\infty(y,s,\xi)| \leq C''(1+|\xi|^{1-q}) \end{equation}
for every $(s,\xi) \in \Rb^d \times \Rb^{d \times N}$  and a.e.
$y \in \Rb^N\,$.
\end{itemize}
\end{lemma}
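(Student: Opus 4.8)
The plan is to reduce everything to two elementary facts about the auxiliary map $\mathbb P$. First, for each $s\in\Rb^d$ the linear operator $\xi\mapsto\mathbb P_s(\xi)$ is a contraction of $\Rb^{d\times N}$, so that $|\mathbb P_s(\xi)|\le|\xi|$ and $|\xi-\mathbb P_s(\xi)|\le 2|\xi|$; this is immediate since each $P_{\Pi(s)}$ is an orthogonal projection and $\chi(s)\in[0,1]$. Second, the operator-valued map $s\mapsto\mathbb P_s$ is globally Lipschitz on $\Rb^d$, say with constant $\Lambda$: on $\mathcal U$ it is the product of the Lipschitz, $[0,1]$-valued function $\chi$ by the bounded Lipschitz map $s\mapsto\mathbf P_{\Pi(s)}$, hence Lipschitz there; it vanishes on the open set $\{\dist(\cdot,\M)>3\delta_0/4\}$, which is a neighborhood of $\Rb^d\setminus\mathcal U$, and since $\{\dist(\cdot,\M)\le 3\delta_0/4\}$ is compact and contained in $\mathcal U$ the two pieces glue into a globally Lipschitz bounded map.

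With these at hand the proof proceeds item by item. Since $(s,\xi)\mapsto\mathbb P_s(\xi)$ is continuous and $f$ is Carath\'eodory, $f(y,\mathbb P_s(\xi))$ is measurable in $y$ and continuous in $(s,\xi)$, while $|\xi-\mathbb P_s(\xi)|$ is continuous in $(s,\xi)$ and does not depend on $y$; hence $g$ is Carath\'eodory. For \eqref{idfg}, if $s\in\M$ then $\dist(s,\M)=0$, so $\chi(s)=1$ and $\Pi(s)=s$, giving $\mathbb P_s=\mathbf P_s$; if moreover $\xi\in[T_s(\M)]^N$ then each $\xi_i$ is fixed by $P_s$, so $\mathbb P_s(\xi)=\xi$ and $g(y,s,\xi)=f(y,\xi)$. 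As $T_s(\M)$ is a linear subspace, $t\xi\in[T_s(\M)]^N$ for every $t>0$, whence $g(y,s,t\xi)=f(y,t\xi)$; dividing by $t$ and taking the $\limsup$ gives $g^\infty(y,s,\xi)=f^\infty(y,\xi)$. Property~(i) holds because $\chi$, $\Pi$ and $\mathbf P$ are independent of $y$ while $f(\cdot,\xi)$ is $1$-periodic. For~(ii), the upper bound in \eqref{pgrowth} follows from $(H_2)$ together with $|\mathbb P_s(\xi)|\le|\xi|$ and $|\xi-\mathbb P_s(\xi)|\le 2|\xi|$, while the lower bound follows from $f(y,\mathbb P_s(\xi))\ge\a|\mathbb P_s(\xi)|$ and $|\xi|\le|\mathbb P_s(\xi)|+|\xi-\mathbb P_s(\xi)|$, which gives $g(y,s,\xi)\ge\min\{\a,1\}|\xi|$.

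For~(iii), using $(H_3)$ and $\bigl||a|-|b|\bigr|\le|a-b|$,
\[
|g(y,s,\xi)-g(y,s',\xi)|\le (L+1)\,|\mathbb P_s(\xi)-\mathbb P_{s'}(\xi)|\le (L+1)\Lambda\,|s-s'|\,|\xi|\,,
\]
and, using the linearity of $\mathbb P_s$ and the contraction property,
\[
|g(y,s,\xi)-g(y,s,\xi')|\le L\,|\mathbb P_s(\xi-\xi')|+\bigl|(\xi-\xi')-\mathbb P_s(\xi-\xi')\bigr|\le (L+2)\,|\xi-\xi'|\,.
\]
For~(iv), the linearity of $\mathbb P_s$ gives $g(y,s,t\xi)/t=f(y,t\mathbb P_s(\xi))/t+|\xi-\mathbb P_s(\xi)|$, so $g^\infty(y,s,\xi)=f^\infty(y,\mathbb P_s(\xi))+|\xi-\mathbb P_s(\xi)|$; subtracting, $|g(y,s,\xi)-g^\infty(y,s,\xi)|=|f(y,\mathbb P_s(\xi))-f^\infty(y,\mathbb P_s(\xi))|\le C(1+|\mathbb P_s(\xi)|^{1-q})\le C(1+|\xi|^{1-q})$ by $(H_4)$ and the monotonicity of $t\mapsto t^{1-q}$, which is \eqref{grec}. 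The only non-routine point is the global Lipschitz continuity of $s\mapsto\mathbb P_s$, which is needed because the nearest point projection $\Pi$ is defined only near $\M$; this is exactly what the cut-off $\chi$, vanishing before $\partial\mathcal U$ is reached, is designed to ensure, and everything else is a direct consequence of the definition of $g$, the hypotheses $(H_1)$--$(H_4)$, and the contraction estimate for $\mathbb P_s$.
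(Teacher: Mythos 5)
Your proof is correct. The paper actually omits the proof of this lemma (it is stated without a \emph{Proof} environment, being a routine verification carried over from the companion paper \cite{BM}), so there is no argument in the paper to compare against; but your argument is the natural direct one, and every step checks out: the contraction bound $|\mathbb P_s(\xi)|\le|\xi|$ and the global Lipschitz continuity of $s\mapsto\mathbb P_s$ (which holds because $\chi$ is supported in the compact set $\{\dist(\cdot,\M)\le 3\delta_0/4\}\subset\subset\mathcal U$, so the two Lipschitz pieces glue) are exactly the two facts one needs, the identification $g^\infty(y,s,\xi)=f^\infty(y,\mathbb P_s(\xi))+|\xi-\mathbb P_s(\xi)|$ obtained from the positive $1$-homogeneity of $\xi\mapsto\mathbb P_s(\xi)$ is the right way to get both \eqref{idfg} and~(iv), and the estimates in (ii)--(iii) follow cleanly from $(H_2)$, $(H_3)$, the reverse triangle inequality, and linearity of $\mathbb P_s$.
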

\vskip5pt

We can now state the properties of $Tf_\hom$ and the relation
between $Tf_\hom$ and $g_\hom$ through the homogenization procedure.

\begin{proposition}\label{properties1}
Let $f:\Rb^N \times \Rb^{d\times N} \to [0,+\infty)$ be a
Carath\'eodory integrand satisfying $(H_1)$ to $(H_3)$.
Then the following properties hold:
\begin{itemize}
\item[(i)] for every $s \in \M$ and $\xi \in [T_s(\M)]^N$,
\begin{equation}\label{identhomform}
Tf_{\rm hom}(s,\xi)=g_{\rm hom}(s,\xi)\,,
\end{equation}
where
$$g_{\rm hom}(s,\xi):=\lim_{t\to+\infty}\inf_{\varphi}\bigg\{
\med_{(0,t)^N} g(y,s,\xi+\nabla \varphi(y))\, dy: \varphi
\in W^{1,\infty}_0((0,t)^N;\Rb^d) \bigg\}$$ is the usual homogenized
energy density of $g$ (see, e.g.,
\cite[Chapter~14]{BD});\\
\item[(ii)] the function $Tf_{\rm hom}$ is tangentially
quasiconvex, {\it i.e.}, for all $s \in \M$ and all $\xi \in
[T_s(\M)]^N$,
$$Tf_{\rm hom}(s,\xi) \leq \int_Q Tf_{\rm hom}(s,\xi + \nabla \varphi(y))\, dy$$
for every $\varphi \in W^{1,\infty}_0(Q;T_s(\M))$. In particular
$Tf_{\rm hom}(s,\cdot)$ is rank one convex;\\
\item[(iii)] there exists
$C>0$ such that
\begin{equation}\label{pgT}
\a|\xi|\leq Tf_{\rm hom}(s,\xi) \leq \b(1+|\xi|)\,,
\end{equation}
and
\begin{equation}\label{plipT}
|Tf_{\rm hom}(s,\xi)-Tf_{\rm hom}(s,\xi')| \leq
C|\xi-\xi'|
\end{equation}
for every $s \in \M$ and $\xi$, $\xi' \in [T_s(\M)]^N$;
\item[(iv)] there exists $C_1>0$ such that
\begin{equation}\label{hyp4}
|Tf_{\rm hom}(s,\xi)-Tf_{\rm hom}(s',\xi)| \leq
C_1|s-s'|(1+|\xi|)\,,
\end{equation}
for every $s$, $s' \in \Rb^d$ and $\xi \in \Rb^{d \times N}$. In
particular $Tf_{\rm hom}$ is continuous;\\
\item[(v)] if in addition $(H_4)$ holds, there exist $C_2>0$ and $0<q<1$ such that
\begin{equation}\label{hyp5}
|Tf^{\,\infty}_{\rm hom}(s,\xi)-Tf_{\rm hom}(s,\xi)| \leq
C_2(1+|\xi|^{1-q})\,,
\end{equation}
for every $(s,\xi)\in \Rb^d\times \Rb^{d \times N}$.
\end{itemize}
\end{proposition}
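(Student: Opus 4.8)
The plan is to establish the five items of Proposition~\ref{properties1} essentially by transferring, through the identity \eqref{identhomform}, the classical properties of the standard homogenized density $g_{\rm hom}$ (for which $g$ satisfies the hypotheses of \cite[Chapter~14]{BD} thanks to Lemma~\ref{defg}) and then reading off the tangential statements. First I would prove (i): fix $s \in \M$ and $\xi \in [T_s(\M)]^N$. Since $g(\cdot,s,\cdot)=f(\cdot,\cdot)$ on $[T_s(\M)]^N$ by \eqref{idfg}, restricting competitors in the definition of $g_{\rm hom}$ to those with values in $T_s(\M)$ reproduces exactly the minimization defining $Tf_{\rm hom}(s,\xi)$ in \eqref{Tfhom} up to the density $W^{1,\infty}_0 \cap$ (values in $T_s(\M)$) inside $W^{1,\infty}_0((0,t)^N;T_s(\M))$, so $Tf_{\rm hom}(s,\xi) \geq g_{\rm hom}(s,\xi)$. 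For the reverse inequality, given an arbitrary competitor $\varphi \in W^{1,\infty}_0((0,t)^N;\Rb^d)$ for $g_{\rm hom}$, I would replace $\varphi$ by $P_s \circ \varphi$, which still lies in $W^{1,\infty}_0((0,t)^N;T_s(\M))$ with $\nabla(P_s\varphi) = \mathbf{P}_s(\nabla\varphi)$, and use the pointwise bound $g(y,s,\xi+\nabla\varphi) = f(y,\xi+\mathbb{P}_s(\nabla\varphi)) + |\nabla\varphi - \mathbb{P}_s(\nabla\varphi)| \geq f(y,\xi+\mathbf{P}_s(\nabla\varphi))$ valid on $\mathcal U$ (where $\chi=1$), together with $\xi + \mathbf{P}_s(\nabla\varphi) = \nabla(P_s\varphi) + \xi \in [T_s(\M)]^N$; this shows the $g$-energy of $\varphi$ dominates the $f$-energy of the admissible tangential competitor $P_s\varphi$, hence $g_{\rm hom}(s,\xi) \geq Tf_{\rm hom}(s,\xi)$. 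This is the same argument as in \cite{BM} and is where the design \eqref{defig} of $g$ pays off.

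Once (i) is in hand, items (ii)--(v) follow by invoking the corresponding classical facts for $g_{\rm hom}$. For (ii): $g_{\rm hom}(s,\cdot)$ is quasiconvex on all of $\Rb^{d\times N}$ (standard for the homogenized integrand, see \cite[Chapter~14]{BD}); restricting the test fields in the quasiconvexity inequality to $\varphi \in W^{1,\infty}_0(Q;T_s(\M))$ and using \eqref{identhomform} on both sides (note $\xi + \nabla\varphi \in [T_s(\M)]^N$ whenever $\xi$ is) yields tangential quasiconvexity, and rank-one convexity in directions of $[T_s(\M)]^N$ is a consequence of quasiconvexity restricted to those directions. For (iii): the growth \eqref{pgrowth} with constants $\alpha',\beta'$ passes to $g_{\rm hom}$, but since on the tangential set $g=f$ one recovers the sharp constants $\a,\b$ of $(H_2)$ directly from \eqref{Tfhom} by taking $\varphi\equiv 0$ for the upper bound and Jensen/averaging for the lower bound $\a|\xi| = \a|\med \xi + \nabla\varphi| \leq \med \a|\xi+\nabla\varphi| \leq \med f(y,\xi+\nabla\varphi)$; the Lipschitz bound \eqref{plipT} in $\xi$ comes from \eqref{lipg} (or $(H_3)$) by passing to the inf and the limit, a standard stability argument. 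For (iv): the modulus-of-continuity estimate \eqref{moduluscont}, $|g(y,s,\xi)-g(y,s',\xi)| \leq C|s-s'||\xi|$, transfers to $g_{\rm hom}$ by the same inf/limit stability (comparing competitors for $g_{\rm hom}(s,\cdot)$ and $g_{\rm hom}(s',\cdot)$ using the common test field and \eqref{moduluscont}, plus the growth bound to absorb the constant term), giving \eqref{hyp4} first for $s,s'\in\M$ and $\xi\in[T_s(\M)]^N$, then on all of $\Rb^d\times\Rb^{d\times N}$ by the convention $Tf_{\rm hom}:=g_{\rm hom}$ off the tangent bundle; continuity is immediate. For (v): under $(H_4)$, Lemma~\ref{defg}(iv) gives \eqref{grec}, and the comparison of $g_{\rm hom}$ with $(g^\infty)_{\rm hom} = (g_{\rm hom})^\infty$ — the commutation of recession and homogenization, which holds precisely because of the sublinear-error bound \eqref{grec} (this is the standard argument, e.g. in \cite{Bouch,BD}) — yields \eqref{hyp5}.

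The main obstacle I anticipate is item (v), the identity $(g_{\rm hom})^\infty = (g^\infty)_{\rm hom}$ together with the uniform rate $|(g_{\rm hom})^\infty - g_{\rm hom}| \leq C(1+|\xi|^{1-q})$: one must show that the sublinear error in \eqref{grec} survives the double limit (the homogenization limit $t\to\infty$ and the recession limit in the scaling parameter), uniformly in $s$. The argument is the classical one — for each fixed scale $t$ one has $|\frac{1}{\tau} g_{\rm hom,t}(s,\tau\xi) - g_{\rm hom,t}(s,\xi)| \leq C(1/\tau + |\xi|^{1-q})$ coming from \eqref{grec} applied inside the cell problem, and then one passes to the limits in the right order, using that the constant $C$ in \eqref{grec} is independent of $s$ — but making the uniformity in $s$ explicit and checking that the recession function $Tf_{\rm hom}^\infty(s,\xi)$ defined by the limsup in $t$ of the scaled densities agrees with the homogenization of $f^\infty$ restricted to the tangent directions requires care. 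A secondary (minor) point is that in (i) one should note that replacing $W^{1,\infty}_0((0,t)^N;\Rb^d)$-competitors by $W^{1,\infty}_0((0,t)^N;T_s(\M))$-competitors via the linear projection $P_s$ is legitimate because $P_s$ is linear (hence preserves the Sobolev class and zero boundary values) and because $\mathbf{P}_s = \nabla \circ P_s$ commutes with differentiation; this is genuinely special to the \emph{linear} tangent-space constraint and would fail for a nonlinear manifold constraint, which is exactly why the auxiliary integrand $g$ is needed only for the bulk/Cantor parts.
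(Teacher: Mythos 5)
Your proposal follows the paper's strategy closely for items (i)--(iv): reduce to the unconstrained density $g_{\rm hom}$ via the linear projection $\mathbf P_s$ and the design of $g$, then transfer the classical properties of $g_{\rm hom}$ using the growth bound on nearly optimal cell correctors. The paper in fact simply cites \cite[Prop.~2.1]{BM} for (i)--(iii) and only writes out (iv)--(v); your direct sketch of (i)--(iii) is the right argument, and your treatment of (iv) (bound $\med|\nabla\varphi|\leq C(1+|\xi|)$ for a near minimizer via \eqref{pgrowth}, then apply \eqref{moduluscont}, then swap $s\leftrightarrow s'$) is exactly the paper's. One small remark: the paper runs (iv) directly on all of $\Rb^d\times\Rb^{d\times N}$ through $g_{\rm hom}$, so there is no need for the intermediate step ``first on the tangent bundle, then extend by convention'' that you propose.

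The genuine divergence is in (v). You want to invoke the commutation $(g_{\rm hom})^\infty=(g^\infty)_{\rm hom}$ as a black box (citing \cite{Bouch,BD}) and then compare $g_{\rm hom}$ with $(g^\infty)_{\rm hom}$ using \eqref{grec}. The paper deliberately avoids asserting the full commutation: it only proves the inequality $Tf_{\rm hom}^\infty\leq T(f^\infty)_{\rm hom}$ (by Fatou / dominated convergence, which is the easy direction), and then establishes \eqref{hyp5} by two separate one-sided estimates, namely $Tf_{\rm hom}-Tf_{\rm hom}^\infty\leq C(1+|\xi|^{1-q})$ via a diagonal choice of sequences $t_n,k_n,\varphi_n$ and a double application of $(H_4)$ (once for $f$ vs.\ $f^\infty$, once for $f^\infty(y,\zeta)$ vs.\ $f(y,t_n\zeta)/t_n$ using positive $1$-homogeneity), and $T(f^\infty)_{\rm hom}-Tf_{\rm hom}\leq C(1+|\xi|^{1-q})$ by direct comparison with a common near minimizer. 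Both routes reach the same conclusion — the hard direction $(g_{\rm hom})^\infty\geq(g^\infty)_{\rm hom}$ does hold under $(H_4)$ by essentially the same diagonal argument the paper uses, and indeed the equality then follows a posteriori from \eqref{hyp5} and positive $1$-homogeneity of both sides — but the paper's version is self-contained, while yours delegates a non-trivial step to a citation you would need to verify. Also, the intermediate estimate you wrote in your final paragraph, $|\tfrac{1}{\tau}g_{{\rm hom},t}(s,\tau\xi)-g_{{\rm hom},t}(s,\xi)|\leq C(1/\tau+|\xi|^{1-q})$, is not the right comparison: \eqref{grec} compares $\tfrac{1}{\tau}g(y,s,\tau\zeta)$ to $g^\infty(y,s,\zeta)$ (not to $g(y,s,\zeta)$), and the resulting bound is of the form $C(\tau^{-1}+\tau^{-q}|\xi|^{1-q})$. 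This is precisely why the paper routes the argument through $T(f^\infty)_{\rm hom}$ rather than trying to compare $g_{\rm hom}(s,\cdot)$ directly with its own scaled version.
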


\begin{remark}\label{reminfty}
Observe that, if $f$ satisfies assumption $(H_3)$, then $f^\infty$
satisfies $(H_3)$ as well. In particular the function $f^\infty$ is
Carath\'eodory, $1$-periodic in the first variable, and positively
1-homogeneous with respect to the second variable. In view of the
growth and coercivity condition $(H_2)$, one gets that
\begin{equation}\label{finfty1gc} \a |\xi| \leq f^\infty(y,\xi) \leq
\b|\xi| \quad \text{ for all }\xi \in \Rb^{d \times N} \text{ and
a.e. }y \in \Rb^N\,.
\end{equation}
Then, as for $f^\infty$, the function $g^\infty$ is Carath\'eodory, $1$-periodic in the first variable, and positively 1-homogeneous with respect to the second variable. Moreover,
$$\alpha'|\xi|\leq g^\infty(y,s,\xi)\leq
\b'|\xi|\quad\text{for every $(s,\xi)\in \Rb^d \times \Rb^{d \times
N}$ and a.e. $y \in \Rb^N$}\,, $$ and $g^\infty$ satisfies estimates
analogue to \eqref{moduluscont} and \eqref{lipg}. Hence we may apply
classical homogenization results to $g^\infty$. In addition, in view
of \eqref{idfg}, claim{\it (i)} in Proposition \ref{properties1}
holds for $f^\infty$ and $g^\infty$, and we have
$$T(f^\infty)_{\rm hom}(s,\xi)=(g^\infty)_{\rm hom}(s,\xi) \quad \text{for every $s
\in \M$ and $\xi \in [T_s(\M)]^N\,$.}$$ In particular
$T(f^\infty)_{\rm hom}$ will be tacitely extended by
$(g^\infty)_{\rm hom}$.
\end{remark}

\noindent {\bf Proof of Proposition \ref{properties1}.} The proofs
of claims {\it (i)-(iii)} can be obtained exactly as in
\cite[Proposition 2.1]{BM} and we shall omit it. It remains to prove
{\it (iv)} and {\it (v)}.

Fix $s,s'\in\Rb^d$ and $\xi \in \Rb^{d \times N}$. For any $\eta>0$,
we may find $k \in \Nb$ and $\varphi \in
W_0^{1,\infty}((0,k)^N;\Rb^d)$ such that
$$\med_{(0,k)^N} g(y,s,\xi+\nabla \varphi)\, dy \leq g_\hom(s,\xi)
+ \eta\,.$$ We infer from \eqref{pgrowth}
that $\alpha'|\xi|\leq g_{\rm hom}(s,\xi)\leq \beta'(1+|\xi|)$ and consequently
\begin{equation}\label{varphi}
\med_{(0,k)^N}|\nabla \varphi|\, dy \leq C(1+|\xi|)\,,
\end{equation}
for some constant $C>0$ depending only on $\a'$ and $\b'$.
Then from (\ref{identhomform}) and \eqref{moduluscont} it follows
that
\begin{multline*}
Tf_\hom(s',\xi)-Tf_\hom(s,\xi)= g_\hom(s',\xi)-g_\hom(s,\xi)\leq\\
\leq \med_{(0,k)^N} \big(g(y,s',\xi+\nabla \varphi)-g(y,s,\xi+\nabla
\varphi)\big)\, dy+ \eta \leq\\ \leq  C |s-s'|\med_{(0,k)^N}|\xi
+\nabla \varphi|\, dy+\eta\leq  C|s-s'|(1+|\xi|)+\eta\,.
\end{multline*}
We deduce relation (\ref{hyp4}) inverting the roles of
$s$ and $s'$, and sending $\eta$ to zero. In particular, we obtain
that $Tf_\hom$ is continuous as a consequence of (\ref{hyp4}) and
(\ref{plipT}).

To show (\ref{hyp5}), let us consider sequences $t_n \nearrow +\infty$, $k_n
\in \Nb$ and $\varphi_n\in W^{1,\infty}_0((0,k_n)^N;T_s(\M))$ such
that
\begin{equation}\label{comptfhoninf}
Tf_\hom^{\, \infty}(s,\xi)=\lim_{n \to
+\infty}\frac{Tf_\hom(s,t_n\xi)}{t_n}\,,
\end{equation}
and
$$\med_{(0,k_n)^N}f(y,t_n \xi + t_n \nabla \varphi_n)\, dy \leq
Tf_\hom(s,t_n\xi)+\frac{1}{n}\,.$$ 
Then $(H_2)$ and
(\ref{pgT}) yield
\begin{equation}\label{varphi_n}\med_{(0,k_n)^N}|\nabla \varphi_n|\, dy \leq
C(1+|\xi|)\,,
\end{equation}
for some constant $C>0$ depending only on
$\a$ and $\b$. Using $(H_4)$ and \eqref{comptfhoninf}, we derive that
\begin{align*}
 Tf_\hom(s,\xi) - Tf_\hom^{\,\infty}(s,\xi)
\leq & \liminf_{n \to +\infty} \Bigg\{\med_{(0,k_n)^N}\bigg|
f(y,\xi+\nabla\varphi_n)- f^{\,\infty}(y,\xi +\nabla \varphi_n)\bigg|\, dy\, +\\
&\,+ \med_{(0,k_n)^N}\bigg| f^{\,\infty}(y,\xi+\nabla
\varphi_n)-\frac{ f(y,t_n \xi + t_n \nabla
\varphi_n)}{t_n}\bigg|\, dy\Bigg\}\\
\leq & \liminf_{n \to +\infty} \Bigg\{C
\med_{(0,k_n)^N}\!(1+|\xi+\nabla \varphi_n|^{1-q})\, dy\\
&+ \frac{C}{t_n}\med_{(0,k_n)^N}\!(1+t_n^{1-q}|\xi+\nabla
\varphi_n|^{1-q})\, dy\Bigg\}\,,
\end{align*}
where we have also used the fact that $f^{\,\infty}(y,\cdot)$ is
positively homogeneous of degree one in the last inequality. Then
(\ref{varphi_n}) and H\"older's inequality lead to
\begin{equation}\label{firstineq}
Tf_\hom(s,\xi) - Tf_\hom^{\,\infty}(s,\xi) \leq C(1+|\xi|^{1-q})\,.
\end{equation}
Conversely, given $k\in \Nb$ and $\varphi \in
W_0^{1,\infty}((0,k)^N;T_s(\M))$, we deduce from $(H_2)$
that
$$\frac{f(\cdot,t(\xi+\nabla\varphi(\cdot)))}{t} \leq \b(1+|\xi+\nabla
\varphi|) \in L^1((0,k)^N)$$ whenever $t > 1$. Then Fatou's lemma
implies
\begin{equation*}
Tf_\hom^\infty(s,\xi)
\leq \limsup_{t \to +\infty} \med_{(0,k)^N}\frac{f(y,t\xi+t
\nabla \varphi)}{t}\, dy \leq  \med_{(0,k)^N}f^\infty(y,\xi+ \nabla \varphi)\, dy\,.
\end{equation*}
Taking the infimum over all admissible $\varphi$'s  and letting $k\to+\infty$, we
infer
\begin{equation}\label{remdensinf}
Tf_\hom^\infty(s,\xi) \leq T(f^\infty)_\hom(s,\xi)\,.
\end{equation}
For $\eta>0$ arbitrary small, consider $k \in \Nb$ and $\varphi \in
W^{1,\infty}_0((0,k)^N;T_s(\M))$ such that
$$\med_{(0,k)^N}f(y,\xi + \nabla \varphi)\, dy \leq
Tf_\hom(s,\xi)+\eta\,.$$
In view of $(H_2)$ and (\ref{pgT}), it turns
out that \eqref{varphi} holds 
with constant $C>0$ only depending  on $\a$ and $\b$. Then it
follows from \eqref{remdensinf} that
\begin{multline*}
Tf_\hom^\infty(s,\xi) - Tf_\hom(s,\xi)  \leq
T(f^\infty)_\hom(s,\xi) - Tf_\hom(s,\xi)\leq\\
\leq  \med_{(0,k)^N}|f^\infty(y,\xi + \nabla \varphi)- f(y,\xi
+ \nabla \varphi)|\, dy+\eta \leq  C\med_{(0,k)^N}(1+|\xi + \nabla \varphi|^{1-q})\, dy+\eta\,,
\end{multline*}
where we have used $(H_4)$ in the last inequality. Using H\"older's
inequality, relation (\ref{varphi}) together with the arbitrariness of
$\eta$ yields
\begin{equation}\label{secineq}Tf_\hom^\infty(s,\xi) -
Tf_\hom(s,\xi) \leq C(1+|\xi|^{1-q})\,.\end{equation} Gathering
(\ref{firstineq}) and (\ref{secineq}) we conclude the proof of
(\ref{hyp5}). \prbox

\subsection{The homogenized surface energy}\label{sectsurf}

\noindent We now present the homogenized surface energy density
$\vartheta_\hom$. We start by introducing some useful notations.

Given $\nu=(\nu_1,\ldots,\nu_N)$ an orthonormal basis of $\Rb^N$ and
$(a,b) \in \M \times \M $, we denote by
$$Q_\nu:=\Big\{\alpha_1\nu_1+\ldots+\alpha_N\nu_N\;:\;\alpha_1,\ldots,\alpha_N \in(-1/2,1/2)\Big\}\,,$$
and for $x \in \Rb^N$, we set $\|x\|_{\nu,\infty}:=\sup_{i \in
\{1,\ldots,N\}}|x\cdot\nu_i|$, $x_\nu:=x\cdot \nu_1$ and
$x':=(x\cdot\nu_2)\nu_2 +\ldots+(x\cdot \nu_N)\nu_N$ so that $x$ can
be identified to the pair $(x',x_\nu)$. Let $u_{a,b,\nu}:Q_\nu \to
\M$ be the function defined by
$$u_{a,b,\nu}(x):=\begin{cases}
a & \text{if }  x_\nu>0\,,\\[5pt]
b & \text{if } x_\nu \leq 0\,.
\end{cases}$$
We introduce the class of functions
$$\A_t(a,b,\nu) :=\Big\{\varphi \in W^{1,1}(t Q_\nu;\M):
\varphi=u_{a,b,\nu} \text{ on }\partial(tQ_\nu)\Big\}\,.$$
We have the following result.

\begin{proposition}\label{limitsurfenerg}
For every $(a,b,\nu_1)\in\M\times\M\times\SN$, there exists
\begin{eqnarray*}
\vartheta_{\rm hom}(a,b,\nu_1) &: = & \lim_{t\to+\infty}\,\inf_\varphi
\left\{\frac{1}{t^{N-1}} \int_{t Q_\nu} f^\infty(y,\nabla
\varphi(y))\, dy : \varphi \in \A_t(a,b,\nu) \right\}\,,
\end{eqnarray*}
where $\nu=(\nu_1,\ldots,\nu_N)$ is any orthonormal basis of $\Rb^N$ with first element equal to $\nu_1$ (the limit being independent of such a
choice).
\end{proposition}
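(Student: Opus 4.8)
\emph{Strategy.} For a bounded open set $A\subset\Rb^N$ put
$$m(a,b,\nu;A):=\inf\Big\{\int_A f^\infty(y,\nabla\varphi(y))\,dy:\varphi\in W^{1,1}(A;\M),\ \varphi=u_{a,b,\nu}\text{ on }\partial A\Big\}\,,$$
so that the quantity whose limit is claimed equals $g(t):=t^{-(N-1)}m(a,b,\nu;tQ_\nu)$, which a priori depends on the whole basis $\nu$. The plan follows the classical pattern for cell formulas: first show that $0\leq g(t)\leq C$ uniformly in $t$; then prove $\limsup_{t\to+\infty}g(t)\leq\liminf_{t\to+\infty}g(t)$ by a subadditivity argument, which yields the existence of the limit; and finally check that the limit does not change when $(\nu_2,\dots,\nu_N)$ is replaced by another completion of $\nu_1$.

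\emph{A priori bounds.} Since $f^\infty\geq0$, clearly $g(t)\geq0$. For the upper bound, use compactness and connectedness of $\M$ to fix a Lipschitz curve $\gamma:[0,1]\to\M$ with $\gamma(0)=b$, $\gamma(1)=a$ and Lipschitz constant depending only on $\M$. Writing $x\in tQ_\nu$ as $(x',x_\nu)$ and setting $\rho(x'):=\min\{1,\dist(x',\partial\{y':\|y'\|_{\nu,\infty}<t/2\})\}$, define
$$\varphi_t(x):=\gamma\big(\tau(x_\nu/\rho(x'))\big)\,,$$
where $\tau:\Rb\to[0,1]$ is Lipschitz with $\tau\equiv0$ on $(-\infty,-\tfrac12]$ and $\tau\equiv1$ on $[\tfrac12,+\infty)$. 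Then $\varphi_t\in W^{1,1}(tQ_\nu;\M)$; its trace equals $a$ on the faces of $tQ_\nu$ lying in $\{x_\nu>0\}$, equals $b$ on those lying in $\{x_\nu\leq0\}$, and equals the sharp jump $u_{a,b,\nu}$ on the lateral faces, where $\rho$ vanishes; hence $\varphi_t\in\A_t(a,b,\nu)$. Since $f^\infty(y,\xi)\leq\b|\xi|$ and $|\nabla\varphi_t|\leq C/\rho(x')$ on the set $\{|x_\nu|<\rho(x')/2\}$ (off which $\nabla\varphi_t=0$), Fubini's theorem gives $\int_{tQ_\nu}f^\infty(y,\nabla\varphi_t)\,dy\leq C\,t^{N-1}$, so $g(t)\leq C$.

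\emph{Existence of the limit, and the main obstacle.} Fix $\eta>0$, pick $s$ large with $g(s)\leq\liminf_\tau g(\tau)+\eta$ and a near-optimal $v^\star\in\A_s(a,b,\nu)$. For $t\gg s$, cover the lateral cross-section $\{x':\|x'\|_{\nu,\infty}<t/2\}$, up to a frame of width at most $2s$, by $(N-1)$-dimensional cubes of side $s$ translated by vectors $z_i$ ($i\in I$, $\#I\leq(t/s)^{N-1}$) in the span of $\nu_2,\dots,\nu_N$; above each, place the cube $sQ_\nu+z_i$, centred on $\{x_\nu=0\}$; set $v\equiv a$ on $\{x_\nu\geq s/2\}\cap tQ_\nu$, $v\equiv b$ on $\{x_\nu\leq-s/2\}\cap tQ_\nu$, and use the tapering device of the previous step inside the frame. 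Because every element of $\A_s(a,b,\nu)$ carries the \emph{same} lateral trace $u_{a,b,\nu}$, the glued map $v$ has no jump across interior interfaces, so $v\in W^{1,1}(tQ_\nu;\M)\cap\A_t(a,b,\nu)$ and
$$t^{N-1}g(t)\ \leq\ \sum_{i\in I}m(a,b,\nu;sQ_\nu+z_i)\ +\ C\,s\,t^{N-2}\,,$$
the last term bounding the energy in the frame via $f^\infty\leq\b|\cdot|$. Were $f^\infty$ invariant under the $z_i$, each summand would be $\leq s^{N-1}(g(s)+\eta)$ and, dividing by $t^{N-1}$ and letting $t\to+\infty$ then $\eta\to0$, we would get $\limsup_t g(t)\leq\liminf_t g(t)$; the reverse inequality follows by exchanging the roles of $t$ and $s$. \emph{The main obstacle} is precisely this invariance: $f^\infty$ is only $\Zb^N$-periodic, with no continuity in $y$, while the $z_i$ tiling a rotated cube are integer combinations of $\nu_2,\dots,\nu_N$ and hence generically not in $\Zb^N$, so $\int_{sQ_\nu}f^\infty(y+z_i,\nabla v^\star)\,dy$ is not comparable to $\int_{sQ_\nu}f^\infty(y,\nabla v^\star)\,dy$ in an elementary way. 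This is circumvented by first treating rational directions $\nu_1$ with a rational completion---for which there is a sequence $s\to+\infty$ with $s\nu_j\in\Zb^N$, so the tiling does preserve the period---and then propagating to arbitrary $\nu_1$ through a Lipschitz-in-$\nu$ estimate for $g$ and the independence statement below; alternatively, one chooses in each $sQ_\nu+z_i$ a competitor near-optimal for its own, $z_i$-shifted, cell problem and averages over the residues $z_i\bmod\Zb^N$, which equidistribute as $t\to+\infty$.

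\emph{Independence of the completion.} Let $\mu=(\nu_1,\mu_2,\dots,\mu_N)$ be another orthonormal basis. Both $tQ_\nu$ and $tQ_\mu$ are slabs of $\nu_1$-width $t$ capped by a unit $(N-1)$-cube, these cubes differing only by a rotation inside $\nu_1^{\perp}$. Running the tiling construction above inside $tQ_\nu$ with small $\mu$-oriented cubes (all centred on $\{x_\nu=0\}$), and conversely inside $tQ_\mu$ with small $\nu$-oriented cubes---handling the periodicity point as before---gives $g^{\nu}(t)\leq g^{\mu}(s)+\eta+o(1)$ and the symmetric bound; letting $t\to+\infty$ then $\eta\to0$ yields $\lim_t g^{\nu}(t)=\lim_t g^{\mu}(t)$, i.e.\ the limit is independent of the chosen completion of $\nu_1$.
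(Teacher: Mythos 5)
Your overall strategy -- uniform a priori bounds, subadditivity via tiling, then independence of the completion -- is the natural one and is essentially the route the paper takes, delegated to \cite[Proposition~2.2]{BDV}. The genuine gap is in your resolution of what you correctly identify as ``the main obstacle.'' Your first fix asserts that for a rational completion $\nu$ there is a sequence $s\to+\infty$ with $s\nu_j\in\Zb^N$ for all $j\ge 2$. This is false in general: if $v_j\in\Zb^N$ is the primitive integer vector along $\nu_j$, then $s\nu_j\in\Zb^N$ forces $s\in|v_j|\,\Zb$, and the lengths $|v_2|,\dots,|v_N|$ are of the form $\sqrt{\text{integer}}$ and need not be commensurate. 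For instance in $\Rb^3$, with $\nu_2=(1,-1,0)/\sqrt2$ and $\nu_3=(1,1,-2)/\sqrt6$, one needs $s\in\sqrt2\,\Zb\cap\sqrt6\,\Zb=\{0\}$. So the exact tiling by integer-translated copies of $sQ_\nu$ cannot be arranged. Your second alternative (averaging over residues) is not worked out and doesn't obviously close either: the average over $z$ of the $z$-shifted cell infima has no reason to be controlled by the unshifted infimum that you fixed $v^\star$ to be near-optimal for.

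The paper's proof avoids this demand altogether. It fixes the cube $Q_\nu$ at unit size and sends $\e\to0$ in $f^\infty(x/\e,\cdot)$, so translations are $\e$-scaled lattice vectors $\e\sum_j m_j v_j$ whose pitch can be made to approximate, but need not equal, the side of the shrunk copies of a competitor; the residual gaps have measure $O(\e)$ and are filled with explicit low-energy profiles (this is why the intermediate density $\tilde\vartheta_{\rm hom}$ with geodesic-profile boundary data is introduced in Proposition~\ref{limsurf2}, later shown to coincide with $\vartheta_{\rm hom}$ in Proposition~\ref{limitsurfenerg}). Your growing-cube, sharp-jump setup can be made to work, but the periodicity issue has to be handled by this kind of near-tiling with controlled gaps, not by exact tilings along a non-existent sequence $s\nu_j\in\Zb^N$.
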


The proof of Proposition \ref{limitsurfenerg} is quite indirect and
is based on an analogous result for a similar surface energy density
$\tilde \vartheta_\hom$ (see \eqref{surfen2} below). We will prove in Proposition \ref{limsurf2}
that the two densities coincide.
\vskip5pt

Given $a$ and $b\in \M$, we introduce the family of geodesic curves
 between $a$ and $b$ by
$$\mathcal{G}(a,b):=\bigg\{\g\in \C^{\infty}(\Rb;\M):\;\g(t)=a\text{ if } t\geq 1/2,\, \g(t)=b\text{ if }
t\leq -1/2\,,\;\int_\Rb|\dot \g|\, dt=\mathbf
d_\M(a,b)\bigg\}\,,$$ where $\mathbf d_\M$ denotes the geodesic
distance on $\M$. We define for $\e>0$ and
$\nu=(\nu_1,\ldots,\nu_N)$ an orthonormal basis of $\Rb^N$,
$$\mathcal{B}_\e(a,b,\nu):=\Big\{u \in W^{1,1}(Q_\nu;\M)\;:\;u(x)=\g(x_\nu/\e)
\text{ on }\partial Q_\nu\text{ for some }
\g\in\mathcal{G}(a,b)\Big\}\,.$$

\begin{proposition}\label{limsurf2}
For every $(a,b)\in\M\times\M$ and every orthonormal basis
$\nu=(\nu_1,\ldots,\nu_N)$ of $\Rb^N$, there exists the limit
\begin{equation}\label{surfen2}
\tilde\vartheta_{\rm hom}(a,b,\nu)  :=  \lim_{\e\to 0}\,\inf_u \left\{
\int_{Q_\nu} f^\infty\left(\frac{x}{\e},\nabla u\right) dx : u \in
\mathcal{B}_\e(a,b,\nu) \right\}\,.
\end{equation}
Moreover $\tilde\vartheta_{\rm hom}(a,b,\nu)$ only depends on $a$, $b$ and $\nu_1$.
\end{proposition}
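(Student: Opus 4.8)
The plan is to reduce the statement, via the rescaling $x=\e y$, to the existence of $\lim_{t\to+\infty}t^{-(N-1)}\phi(t)$, where
$$\phi(t):=\inf\Big\{\int_{tQ_\nu}f^\infty(y,\nabla v)\,dy:\ v\in W^{1,1}(tQ_\nu;\M),\ v(y)=\gamma(y_\nu)\text{ on }\partial(tQ_\nu)\text{ for some }\gamma\in\mathcal{G}(a,b)\Big\}.$$
Indeed, writing $v(y):=u(\e y)$ one has $\nabla v=\e\,\nabla u(\e\,\cdot)$, so the positive $1$-homogeneity of $f^\infty(y,\cdot)$ gives $\int_{Q_\nu}f^\infty(x/\e,\nabla u)\,dx=\e^{N-1}\int_{(1/\e)Q_\nu}f^\infty(y,\nabla v)\,dy$, while $u=\gamma(x_\nu/\e)$ on $\partial Q_\nu$ becomes $v=\gamma(y_\nu)$ on $\partial((1/\e)Q_\nu)$; thus the infimum in \eqref{surfen2} equals $t^{-(N-1)}\phi(t)$ with $t=1/\e$. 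Two a priori bounds are immediate: testing with $v(y):=\gamma(y_\nu)$ for a geodesic $\gamma$ of length $\mathbf d_\M(a,b)$ and using \eqref{finfty1gc} gives $\phi(t)\le\beta\,\mathbf d_\M(a,b)\,t^{N-1}$; and $\phi\ge0$, with $\phi\equiv0$ if $a=b$ (a trivial case, so assume $a\neq b$).

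Next I would establish the existence of the limit for the \emph{canonical} frame $\nu=(e_1,\dots,e_N)$ by a subadditivity argument. Fix $s\in\Nb$ and $\eta>0$, and pick $\gamma\in\mathcal{G}(a,b)$ together with a competitor $v_s$ on $sQ_\nu$ with energy $\le\phi(s)+\eta$. For $t\gg s$, partition the cross-section $\{y_1=0\}\cap(tQ_\nu)$ by $\le(t/s)^{N-1}$ cubes $z_j+sQ_\nu$, $z_j\in\{0\}\times s\Zb^{N-1}$, leaving only a frame of $(N-1)$-measure $O(s\,t^{N-2})$. On the column above $z_j+sQ_\nu$ use the translate $v_s(\cdot-z_j)$ in $\{|y_1|<s/2\}$ — which, since $z_j\in\Zb^N$ and $f^\infty(\cdot,\xi)$ is $\Zb^N$-periodic, carries the same energy as $v_s$ — and the constant values of $\gamma$ (costing nothing, as $f^\infty(y,0)=0$) in $\{|y_1|\ge s/2\}$; in the leftover frame put $v=\gamma(y_1)$, whose energy there is $\le\beta\,\mathbf d_\M(a,b)$ times $O(s\,t^{N-2})$. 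All traces match ($\gamma(y_1)$ on vertical interfaces, $a$ or $b$ on horizontal ones), so this is an admissible competitor for $\phi(t)$ and $\phi(t)\le(t/s)^{N-1}(\phi(s)+\eta)+C\,s\,t^{N-2}$. Letting $t\to\infty$ then $\eta\to0$ gives $\limsup_t t^{-(N-1)}\phi(t)\le\inf_{s\in\Nb}s^{-(N-1)}\phi(s)$. Conversely, extending any near-optimal competitor on $tQ_\nu$ to $sQ_\nu$ (for $t\le s<t+1$) by $\gamma(y_1)$ on the shell yields $\phi(s)\le\phi(t)+C\,t^{N-2}$, hence $\liminf_t t^{-(N-1)}\phi(t)\ge\inf_{s\in\Nb}s^{-(N-1)}\phi(s)$, and the limit exists and equals $\inf_{s\in\Nb}s^{-(N-1)}\phi(s)$.

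For an arbitrary orthonormal basis $\nu$, and to obtain the dependence on $\nu_1$ alone, I would run the same tiling but feed the columns with near-optimal competitors for boxes $sQ_{\nu'}$ carrying a possibly different frame $\nu'$ with $\nu_1'=\nu_1$: a tiling of $\nu_1^\perp$ by $\nu'$-cubes of side $s$ meets the big $\nu$-cube cross-section in all but a frame of measure $O(s\,t^{N-2})$, so the geometric defect is again filled by $v=\gamma(y_\nu)=\gamma(y\cdot\nu_1)$ at negligible cost, and the fact that the boundary datum only sees $\nu_1$ is exactly what makes the exchange of frames legitimate. The main obstacle is that, for a non-rational direction, the translates $z_j$ are no longer in $\Zb^N$, so the translated competitors do not preserve the energy; this I would handle by replacing $v_s$ by near-optimizers of the \emph{shifted} problems $\inf\{\int_{sQ_{\nu'}}f^\infty(y+z_j,\nabla v)\,dy:\dots\}=m(z_j+sQ_{\nu'})$, observing that $z\mapsto m(z+sQ_{\nu'})$ is $\Zb^N$-periodic, and using Weyl equidistribution of $\{z_j\}$ modulo $\Zb^N$ so that $(t/s)^{-(N-1)}\sum_j m(z_j+sQ_{\nu'})$ converges to the torus average; a matching lower bound (obtained by cutting a near-optimal competitor on $tQ_\nu$ into sub-boxes and re-imposing the geodesic datum on each at the price of thin transition layers) then sandwiches $t^{-(N-1)}\phi(t)$ and, together with the frame-exchange, shows the common limit exists and depends on $(a,b)$ and $\nu_1$ only. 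Throughout, whenever adjacent blocks fail to match, the mismatch is repaired inside a thin layer using a geodesic of $\M$, whose length is $\le{\rm diam}_{\mathbf d_\M}\M<+\infty$ by compactness and connectedness of $\M$, so that all such corrections are of order $o(t^{N-1})$.
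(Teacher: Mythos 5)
Your rescaling to $t^{-(N-1)}\phi(t)$ and the subadditivity argument for the canonical frame are sound: when the tiling translates $z_j$ lie in $\Zb^N$, periodicity of $f^\infty(\cdot,\xi)$ lets you transport a near-optimal competitor from $sQ$ to every column at the same cost, the geodesic layer repairs the leftover frame at cost $O(st^{N-2})$, and the subadditivity yields the limit. The trouble begins exactly where you flag it --- for an irrational frame the translates leave $\Zb^N$ --- and the Weyl-equidistribution fix you propose has several unaddressed obstructions. First, $f^\infty$ is only Carath\'eodory in $y$, so the shifted cell quantity $z\mapsto m(z+sQ_{\nu'})$ is a priori merely upper semicontinuous (an infimum of $z$-continuous functionals, continuity coming from $L^1$-continuity of translations); equidistribution of discrete averages requires Riemann integrability of this map on the torus, which you do not establish. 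Second, the lattice $\{z_j\}\subset \nu_1^\perp$ modulo $\Zb^N$ need not equidistribute on the full torus: if $\nu_1$ is ``partially rational'' it accumulates on a proper subtorus, and the averaging then produces the wrong constant. Third, and most seriously, the claimed ``matching lower bound by cutting a near-optimal competitor into sub-boxes and re-imposing the geodesic datum at the price of thin transition layers'' is not justified: the trace of a near-minimizer of $\phi_\nu(t)$ on the internal faces of $z_j+sQ_{\nu'}$ is unconstrained and has no reason to be close to $\gamma(y\cdot\nu_1)$, so the interpolation cost on a thin shell can be of order $s^{N-1}$ per sub-box rather than $o(s^{N-1})$, giving a total correction of order $t^{N-1}$ rather than $o(t^{N-1})$. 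Without this lower bound your sandwich does not close.

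The paper sidesteps all three issues by never attempting a lower-bound cut. It proves a one-sided inequality $\limsup_\e I_\e(\nu')\le\liminf_\e I_\e(\nu)$ when $\nu$ is a \emph{rational} basis (so the tiling translates can be chosen in $\Zb^N$ and periodicity applies exactly), deduces by symmetry that any two rational frames sharing $\nu_1$ give the same limit, and then passes to arbitrary frames through a uniform-in-$(a,b)$ continuity estimate in $\nu$ (Step 3 of its proof): an explicit nested-cube construction, repaired by geodesic layers whose cost is controlled by the diameter of $\M$, shows that $|\liminf_\e I_\e(\nu)-\liminf_\e I_\e(\nu')|\le K\sigma$ whenever $|\nu_i-\nu'_i|<\delta(\sigma)$. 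This continuity estimate is precisely the tool that replaces your equidistribution step and makes the irrational case tractable; to complete your proof you would either need to prove such an estimate, or to fill the Riemann-integrability, torus-equidistribution, and lower-bound-cutting gaps above.
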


\begin{proof}
The proof follows the scheme of the one in
\cite[Proposition~2.2]{BDV}. We fix $a$ and $b\in \M$. For every
$\e>0$ and every orthonormal basis $\nu=(\nu_1,\ldots,\nu_N)$ of
$\Rb^N$, we set
$$ I_\e(\nu)= I_\e(a,b,\nu):= \inf \left\{ \int_{Q_\nu}
f^\infty\left(\frac{x}{\e},\nabla u\right) dx : u \in
\mathcal{B}_\e(a,b,\nu) \right\}\,.
$$
We divide the proof into several steps.\vskip5pt

{\bf Step 1.} Let $\nu$ and $\nu'$ be two orthonormal bases of
$\Rb^N$ with equal first vector, {\it i.e.}, $\nu_1=\nu'_1$. Suppose
that $\nu$ is a rational basis, {\it i.e.}, for all
$i\in\{1,\ldots,N\}$ there exists $\gamma_i\in\Rb\setminus\{0\}$
such that $v_i:=\gamma_i\nu_i\in\Zb^N$. Similarly to Step 1 of the
proof of \cite[Proposition 2.2]{BDV}, we readily obtain that
\begin{equation}\label{complimsupinf}
\limsup_{\e \to0}\, I_\e (\nu')\leq \liminf_{\e \to0}\, I_\e(\nu)\,.
\end{equation}

{\bf Step 2.} Let $\nu$ and $\nu'$ be two orthonormal rational bases
of $\Rb^N$ with equal first vector. By Step~1 we immediately obtain
that the limits $\ds \lim_{\e\to0}I_\e(\nu)$ and
$\ds\lim_{\e\to0}I_\e(\nu')$ exist and are equal.

\vskip5pt

{\bf Step 3.}  We claim that for every $\sigma>0$ there exists
$\delta>0$ (independent of $a$ and $b$) such that if $\nu$ and
$\nu'$ are two orthonormal bases of $\Rb^N$ with
$|\nu_i-\nu'_i|<\delta$ for every $i=1,\ldots,N$, then
$$\liminf_{\e\to0} I_\e(\nu)-K\sigma\leq \liminf_{\e\to0}I_\e(\nu')\leq\limsup_{\e\to0}I_\e(\nu')\leq
\limsup_{\e\to0} I_\e(\nu)+K\sigma$$ where $K$ is a positive
constant which only depends on $\M$, $\beta$ and $N$.

We use the notation $Q_{\nu,\eta}:=(1-\eta)Q_\nu$ where $0<\eta<1$.
Let $\sigma>0$ be fixed and let $0<\eta<1$ be such that
\begin{equation}\label{condeta}
\eta<\frac{1}{34}\quad\text{and}\quad\max\bigg\{1-(1-\eta)^{N-1}\,,\;
\frac{(1-\eta)^{N-1}(1-2\eta)^{N-1}}{(1-3\eta)^{N-1}}-(1-2\eta)^{N-1}\bigg\}<\sigma.
\end{equation}
Consider $\delta_0>0$ (that may be chosen so that $\d_0 \leq
\eta/(2\sqrt{N})$) such that for every $0<\delta\leq \delta_0$ and
every pair $\nu$ and $\nu'$ of orthonormal basis of $\Rb^N$
satisfying $|\nu_i-\nu'_i|\leq \delta$ for $i=1,\ldots,N$, one has
\begin{equation}\label{approxnu}
Q_{\nu,3\eta}\subset Q_{\nu',2\eta}\subset Q_{\nu,\eta}\,,
\end{equation}
and $\{x\cdot\nu_1'=0\}\cap \partial Q_{\nu,\eta} \subset \{|x\cdot\nu_1|\leq 1/8\}$.

Given $\e>0$ small, we consider $u_\e\in\mathcal{B}_{\e}(a,b,\nu')$
such that
$$\int_{Q_{\nu'}}f^\infty\left(\frac{x}{\e},\nabla u_\e\right)dx\leq I_{\e}(\nu')
+\sigma\,,$$
 where $u_\e(x)=\g_\e(x_{\nu'}/\e)$ for $x\in\partial
Q_{\nu'}$. Now we construct
$v_\e\in\mathcal{B}_{(1-2\eta)\e}(a,b,\nu)$ satisfying the boundary condition
$v_\e(x)=\g_\e\big(x_\nu/(1-2\eta)\e\big)$ for $x\in\partial
Q_{\nu}$. Consider $F_\eta:\Rb^N \to \Rb$,
$$F_\eta(x):=\bigg(\frac{1- 2\|x'\|_{\nu,\infty}}{\eta}\bigg)\frac{x_{\nu'}}{1-2\eta}+
\bigg(\frac{\eta-1+
2\|x'\|_{\nu,\infty}}{\eta}\bigg)\frac{x_\nu}{1-2\eta}\,,$$
and define
$$v_\e(x):=\begin{cases}
\ds u_\e\bigg(\frac{x}{1-2\eta}\bigg) & \text{if }x\in
Q_{\nu',2\eta},\\[10pt]
\ds \g_\e\bigg(\frac{x_{\nu'}}{(1-2\eta)\e}\bigg) &\text{if }
x\in Q_{\nu,\eta}\setminus Q_{\nu',2\eta}\,,\\[8pt]
\ds a & \ds\text{if } x \in Q_\nu \setminus Q_{\nu,\eta} \text{ and }x_\nu \geq \frac{1}{4}\,,\\[8pt]
\ds \gamma_\e\bigg(\frac{F_\eta(x)}{\e}\bigg) & \text{if }x \in
A_\eta :=\big\{x: |x_\nu|\leq 1/4\big\}\cap(Q_\nu\setminus
Q_{\nu,\eta})\,,\\[8pt]
\ds b & \ds\text{if } x \in Q_\nu \setminus Q_{\nu,\eta} \text{ and
}x_\nu \leq -\frac{1}{4}\,.
\end{cases}$$
We can check that $v_\e$ is well defined for $\e$ small enough
and that $v_\e\in\mathcal{B}_{(1-2\eta)\e}(a,b,\nu)$. Therefore
\begin{align}\label{Ebis}
I_{(1-2\eta)\e}(\nu)\leq &
\int_{Q_\nu}f^\infty\bigg(\frac{x}{(1-2\eta)\e},\nabla v_\e\bigg)\,
dx\nonumber\\
=&\int_{Q_{\nu',2\eta}} f^\infty\bigg(\frac{x}{(1-2\eta)\e},\nabla
v_\e\bigg)\,dx+\int_{Q_{\nu,\eta}\setminus
Q_{\nu',2\eta}}f^\infty\bigg(\frac{x}{(1-2\eta)\e},\nabla
v_\e\bigg)\, dx\nonumber\\
&+\int_{A_{\eta}}f^\infty\bigg(\frac{x}{(1-2\eta)\e},\nabla
v_\e\bigg)\, dx=:\,I_1+I_2+I_3\,.
\end{align}
We now estimate these three integrals. First, we easily get that
\begin{equation}\label{E1bis}I_1= (1-2\eta)^{N-1}\int_{Q_{\nu'}}f^\infty\left(\frac{y}{\e},\nabla u_\e\right)
dy\leq I_\e(\nu')+\sigma\,. \end{equation} In view of
\eqref{approxnu} we have $Q_{\nu,\eta}\subset
(1-\eta)(1-2\eta)(1-3\eta)^{-1}Q_{\nu'}=:D_{\eta}$. Then we infer
from the growth condition (\ref{finfty1gc}) together with Fubini's
theorem that
\begin{eqnarray}\label{E2bis}
I_2 & \leq & \beta\int_{D_\eta\setminus Q_{\nu',2\eta}}|\nabla
v_\e|\, dx = \frac{\beta}{(1-2\eta)\e} \int_{(D_\eta\setminus
Q_{\nu',2\eta})\cap\{|x_{\nu'}|\leq (1-2\eta)\e/2\}}
\bigg|\, \dot \g_\e\bigg(\frac{x_{\nu'}}{(1-2\eta)\e}\bigg)\bigg|\, dx\nonumber\\
&= & \beta\mathcal{H}^{N-1}\big((D_\eta\setminus Q_{\nu',2\eta})\cap
\{x_{\nu'}=0\}\big)\frac{1}{(1-2\eta)\e}\int_{-(1-2\eta)\e/2}^{(1-2\eta)\e/2}
\bigg|\, \dot \g_\e\bigg(\frac{t}{(1-2\eta)\e}\bigg)\bigg|\, dt\nonumber\\
&=& \beta \mathbf
d_\M(a,b)\bigg(\frac{(1-\eta)^{N-1}(1-2\eta)^{N-1}}{(1-3\eta)^{N-1}}-(1-2\eta)^{N-1}\bigg)\,.
\end{eqnarray}
Now it remains to estimate $I_3$. To this purpose we first observe
that \eqref{approxnu} yields \begin{equation}\label{Feta1} \|\nabla
F_\eta\|_{L^\infty(A_\eta;\Rb^N)}\leq C\,,
\end{equation}
for some absolute
constant $C>0$, and
\begin{equation}\label{Feta2}|\nabla
F_\eta(x)\cdot \nu_1|\geq 1\quad\text{for a.e. $x\in
A_\eta\,$.}
\end{equation}
Hence, thanks the growth condition
(\ref{finfty1gc}), (\ref{Feta1}) and (\ref{Feta2}), we get that
\begin{multline*}
I_3  \leq  \beta\int_{A_\eta}|\nabla v_\e|\, dx \leq
\frac{C\beta}{\e} \int_{A_\eta}\bigg|\, \dot
\g_\e\bigg(\frac{F_\eta(x)}{\e}\bigg)\bigg |\, dx \leq  \frac{C \beta}{\e} \int_{A_\eta}\bigg|\, \dot
\g_\e\bigg(\frac{F_\eta(x)}{\e}\bigg)\bigg |\, |\nabla
F_\eta(x)\cdot\nu_1| \,dx=\\
=C\beta\int_{A'_\eta}\bigg(\frac{1}{\e}\int_{-1/4}^{1/4}\bigg|\,
\dot \g_\e\bigg(\frac{F_\eta(t\nu_1+x')}{\e}\bigg)\bigg |\, |\nabla
F_\eta(t\nu_1+x')\cdot\nu_1|\, dt\bigg)\, d\mathcal{H}^{N-1}(x')\,,
\end{multline*}
where we have set $A'_\eta:=A_\eta\cap\{x_\nu=0\}$, and used Fubini's
theorem in the last equality. Changing variables
$s=(1/\e)F_{\eta}(t\nu_1+x')$, we obtain that for
$\mathcal{H}^{N-1}$-a.e. $x' \in A'_\eta$,
$$\frac{1}{\e}\int_{-1/4}^{1/4}\bigg|\, \dot \g_\e\bigg(\frac{F_\eta(t\nu_1+x')}{\e}\bigg)\bigg|\,
|\nabla F_\eta(t\nu_1+x')\cdot\nu_1|\, dt\leq \int_\Rb |\dot
\g_\e(s)|\,ds=\mathbf d_\M(a,b)\,.$$ Consequently,
\begin{equation}\label{E3bis}I_3\leq C\beta\, \mathcal{H}^{N-1}(A'_\eta)\, \mathbf
d_\M(a,b)=C\beta \big(1-(1-\eta)^{N-1})\, \mathbf
d_\M(a,b)\,.\end{equation} In view of (\ref{Ebis}), (\ref{condeta})
and estimates  (\ref{E1bis}), (\ref{E2bis}) and (\ref{E3bis}),  we
conclude that
$$I_{(1-2\eta)\e}(\nu)\leq I_\e(\nu')+K\sigma\,,$$
where $K=1+\beta \Delta(1+C)$, $\Delta$ is the diameter of $\M$ and
$C$ is the constant given by (\ref{Feta1}). Finally, letting
$\e\to0$ we derive
$$\liminf_{\e\to 0}I_\e(\nu)\leq \liminf_{\e\to0}I_\e(\nu')+K\sigma\,, \text{ and }
 \limsup_{\e\to 0}I_\e(\nu)\leq \limsup_{\e\to0}I_\e(\nu')+K\sigma\,. $$
The symmetry of the roles of $\nu$ and $\nu'$ allows us to invert
them, thus concluding the proof of Step~3.\vskip5pt

{\bf Step 4.} Let $\nu$ and $\nu'$ be two orthonormal bases of
$\Rb^N$ with equal first vector. Similarly to Step~4 of the proof of
\cite[Proposition 2.2]{BDV}, by Steps 2 and 3 we readily obtain that
the limits $\ds\lim_{\e\to0}I_\e(\nu)$ and
$\ds\lim_{\e\to0}I_\e(\nu')$ exist and are equal.
\end{proof}

\noindent{\bf Proof of Proposition \ref{limitsurfenerg}.}
We use the notation of the previous proof. Given $\e>0$ and an
orthonormal basis $\nu=(\nu_1,\ldots,\nu_N)$ of $\Rb^N$, we set
\begin{align*}
J_\e(\nu)=J_\e(a,b,\nu) : = & \inf \left\{ \int_{Q_\nu}
f^\infty\left(\frac{x}{\e},\nabla u\right)\, dx : u \in
\mathcal{A}_1(a,b,\nu) \right\} \\
=&\inf\bigg\{\e^{N-1}\int_{\frac{1}{\e}Q_\nu}f^\infty(y,\nabla
\varphi)\,dy\;:\;\varphi\in \mathcal{A}_{1/\e}(a,b,\nu)
\bigg\}\,.\end{align*}
We claim that
\begin{equation}\label{idsurfen}
\lim_{\e\to0} J_{\e}(\nu)=\lim_{\e\to 0} I_\e(\nu)\,.
\end{equation}
For $0<\e<1$ we set $\tilde \e=\e/(1-\e)$, and we consider $u_{\tilde
\e}\in\mathcal{B}_{\tilde \e}(a,b,\nu)$ satisfying
$$\int_{Q_\nu}f^{\infty}\left(\frac{x}{\tilde\e},\nabla u_{\tilde\e}\right)dx\leq I_{\tilde \e}(\nu)+ \e\,, $$
where $u_{\tilde \e}(x)=\g_{\tilde \e}(x_\nu/\tilde \e)$ if
$x\in\partial Q_\nu$, for some $\g_{\tilde \e}\in\mathcal{G}(a,b)$.
We define for every $x\in Q_\nu$,
$$v_\e(x):=\begin{cases}
\ds u_{\tilde \e}\left(\frac{x}{1-\e}\right) & \text{if $x\in Q_{\nu,\e}\,$,}\\[10pt]
\ds \g_{\tilde\e}\bigg(\frac{x_\nu}{1-2\|x'\|_{\nu,\infty}}\bigg)
&\text{otherwise}\,.
\end{cases}$$
One may check that $v_\e\in\mathcal{A}_1(a,b,\nu)$, and hence
$$J_\e(\nu)\leq \int_{Q_\nu}f^\infty\left(\frac{x}{\e},\nabla v_\e\right)dx=\int_{Q_{\nu,\e}}f^\infty\left(\frac{x}{\e},\nabla v_\e\right)dx
+\int_{Q_\nu\setminus Q_{\nu,\e}}f^\infty\left(\frac{x}{\e},\nabla
v_\e\right)dx=I_1+I_2\,.$$
We now estimate these two integrals.
First, we have
\begin{equation}\label{2016}I_1=(1-\e)^{N-1}\int_{Q_\nu}f^\infty\left(\frac{y}{\tilde\e},\nabla
u_{\tilde\e}\right)dy \leq (1-\e)^{N-1}\big(I_{\tilde\e}(\nu)+\e\big)\,.\end{equation}
In view of the growth condition (\ref{finfty1gc}),
\begin{align*}
I_2&\leq \beta\int_{Q_\nu\setminus Q_{\nu,\e}}\bigg|\dot \g_{\tilde
\e}\bigg(\frac{x_\nu}{1-2\|x'\|_{\nu,\infty}}\bigg)\bigg|
\bigg(\frac{1}{1-2\|x'\|_{\nu,\infty}}+\frac{|x_\nu||\nabla(\|x'\|_{\nu,\infty})|}{(1-2\|x'\|_{\nu,\infty})^2}\bigg)\,dx\\
&\leq 2\beta\int_{(Q_\nu\setminus Q_{\nu,\e})\cap\{|x_\nu|\leq
(1-2\|x'\|_{\nu,\infty})/2\}}\bigg|\dot \g_{\tilde
\e}\bigg(\frac{x_\nu}{1-2\|x'\|_{\nu,\infty}}\bigg)\bigg|
\bigg(\frac{1}{1-2\|x'\|_{\nu,\infty}}\bigg)\,dx\,,
\end{align*}
where we have used the facts that $\dot \g_{\tilde
\e}(x_\nu/(1-2\|x'\|_{\nu,\infty}))=0$ in the set $\{|x_\nu|>
(1-2\|x'\|_\infty)/2\}$ and
$\|\nabla(\|x'\|_{\nu,\infty})\|_{L^\infty(Q_\nu;\Rb^N)}\leq 1$.
Setting $Q'_\nu=Q_\nu\cap\{x_\nu=0\}$ and
$Q'_{\nu,\e}=Q_{\nu,\e}\cap\{x_\nu=0\}$, we infer from Fubini's
theorem that
\begin{multline}\label{2017}
I_2\leq 2\beta\int_{Q'_\nu\setminus
Q'_{\nu,\e}}\bigg(\int_{-(1-2\|x'\|_{\nu,\infty})/2}^{(1-2\|x'\|_{\nu,\infty})/2}
\bigg|\dot \g_{\tilde
\e}\bigg(\frac{t}{1-2\|x'\|_{\nu,\infty}}\bigg)\bigg|
\bigg(\frac{1}{1-2\|x'\|_{\nu,\infty}}\bigg)\, dt\bigg)\, d\mathcal{H}^{N-1}(x')\leq\\
\leq 2\beta\, \mathcal{H}^{N-1}(Q'_\nu\setminus Q'_{\nu,\e})\,
\mathbf d_\M(a,b)\leq 2\beta \mathbf d_\M(a,b)
\big(1-(1-\e)^{N-1}\big)\,.
\end{multline}
In view of the estimates (\ref{2016}) and (\ref{2017}) obtained for
$I_1$ and $I_2$, we derive that
\begin{equation}\label{2019}
\limsup_{\e\to0}J_\e(\nu)\leq \lim_{\e\to0}
I_\e(\nu)\,.\end{equation}

Conversely, given $0<\e<1$, we consider $\tilde
u_\e\in\mathcal{A}_1(a,b,\nu)$ such that
$$\int_{Q_\nu}f^\infty\left(\frac{x}{\e},\nabla\tilde u_\e\right)dx\leq J_\e(\nu)+\e\,,$$
and $\g\in\mathcal{G}(a,b)$ fixed. We define for $x\in Q_\nu$,
$$w_\e(x):=\begin{cases}
\ds \tilde u_\e\left(\frac{x}{1-\e}\right) &\text{if $x\in Q_{\nu,\e}$,}\\[10pt]
\ds
\g\bigg(\frac{x_\nu}{(1-\e)(2\|x'\|_{\nu,\infty}-1+\e)}\bigg)&\text{otherwise.}
\end{cases}$$
We can check that $w_\e\in\mathcal{B}_{(1-\e)\e}(a,b,\nu)$, and arguing as previously we infer that
\begin{eqnarray*}I_{(1-\e)\e}(\nu)
& \leq & \int_{Q_{\nu,\e}}\!f^\infty\left(\frac{x}{(1-\e)\e}\,,\nabla
w_\e\right)\, dx+ \int_{Q_\nu\setminus
Q_{\nu,\e}}f^\infty\left(\frac{x}{(1-\e)\e}\,,\nabla
w_\e\right)dx\\
&\leq & (1-\e)^{N-1}\big(J_\e(\nu)+\e\big) +2\beta \mathbf d_\M(a,b)\big(1-(1-\e)^{N-1}\big)
\,.
\end{eqnarray*}
Consequently, $\ds\lim_{\e\to0}I_\e(\nu)\leq \liminf_{\e\to0}J_\e(\nu)$, which, together with
(\ref{2019}), completes the proof of Proposition
\ref{limitsurfenerg}.
\prbox
\vskip5pt

We now state the
following properties of the surface energy density.

\begin{proposition}\label{contsurfenerg}
The function $\vartheta_\hom$ is continuous on $\M \times \M \times
\SN$ and there exist constants $C_1>0$  and $C_2>0$ such that
\begin{equation}\label{propsurf}
|\vartheta_{\rm hom}(a_1,b_1,\nu_1) - \vartheta_{\rm hom}(a_2,b_2,\nu_1)| \leq C_1
(|a_1-a_2|+ |b_1-b_2|)\,,
\end{equation}
and
\begin{equation}\label{propsurf2}
\vartheta_{\rm hom}(a_1,b_1,\nu_1)\leq C_2|a_1-b_1|
\end{equation}
for every $a_1,b_1,a_2,b_2\in \M$ and $\nu_1 \in \SN$.
\end{proposition}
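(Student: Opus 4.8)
The plan is to prove the two quantitative estimates \eqref{propsurf} and \eqref{propsurf2} directly from the definition of $\vartheta_\hom$ as a limit of infima (Proposition \ref{limitsurfenerg}), and then deduce continuity in all three variables by combining these Lipschitz-type bounds with the known continuity in $\nu_1$. The key tool throughout is the following competitor-modification principle: given a near-optimal $\varphi \in \A_t(a,b,\nu)$ for one triple, one builds an admissible competitor for a nearby triple by gluing $\varphi$ on an inner region to a thin "transition layer" near $\partial(tQ_\nu)$ that reconnects the boundary datum $u_{a,b,\nu}$ to $u_{a_2,b_2,\nu}$ using a short geodesic path on $\M$. Since $f^\infty$ satisfies the linear bound $f^\infty(y,\xi)\le \beta|\xi|$ from \eqref{finfty1gc}, the cost of any such layer is controlled by $\beta$ times the total length of the geodesics used times the $(N{-}1)$-dimensional measure of the relevant face, divided by $t^{N-1}$ — and the geodesic length is comparable to the Euclidean distance on the compact manifold $\M$.

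For \eqref{propsurf2}: fix a geodesic $\gamma \in \mathcal G(a_1,b_1)$ and take the competitor on $tQ_\nu$ given by $\varphi(x) = \gamma(x_\nu/s)$ for a suitably chosen scale $s$ (depending on $t$), constant $=a_1$ for $x_\nu > s/2$ and $=b_1$ for $x_\nu < -s/2$; one checks $\varphi \in \A_t(a_1,b_1,\nu)$ up to the standard boundary correction near the lateral faces of $tQ_\nu$ (as in the $I_2$-type estimates in the proof of Proposition \ref{limsurf2}). Using $f^\infty(y,\nabla\varphi)\le \beta|\nabla\varphi| = \beta|\dot\gamma(x_\nu/s)|/s$ and Fubini (integrating first in $x_\nu$, picking up $\int_\Rb|\dot\gamma| = \mathbf d_\M(a_1,b_1)$), the energy is bounded by $C\beta\, t^{N-1}\mathbf d_\M(a_1,b_1)/t^{N-1}$ up to lateral corrections that vanish as $t\to\infty$; passing to the limit and using $\mathbf d_\M(a_1,b_1)\le C|a_1-b_1|$ on the compact $\M$ gives \eqref{propsurf2}.

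For \eqref{propsurf}: let $\varphi_t \in \A_t(a_1,b_1,\nu)$ be $\eta$-optimal. Modify it in a boundary collar of $tQ_\nu$ of width $O(1)$ (not scaling with $t$): on $\partial(tQ_\nu)\cap\{x_\nu>0\}$ interpolate from $a_1$ to $a_2$ along a geodesic of length $\mathbf d_\M(a_1,a_2)\le C|a_1-a_2|$, and similarly from $b_1$ to $b_2$ on the lower face, handling the strip $\{|x_\nu|\le c\}$ of the lateral boundary by a construction analogous to the map $v_\e$ built via $F_\eta$ in Step 3 of the proof of Proposition \ref{limsurf2}. The extra energy of the collar is $\le C\beta\,\HH^{N-1}(\partial(tQ_\nu)) \cdot (|a_1-a_2|+|b_1-b_2|) \le C\beta\, t^{N-1}(|a_1-a_2|+|b_1-b_2|)$, so after dividing by $t^{N-1}$, letting $t\to\infty$ and $\eta\to0$, and symmetrizing, we obtain \eqref{propsurf}. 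Finally, continuity: the map $\nu_1\mapsto\vartheta_\hom(a,b,\nu_1)$ is continuous for fixed $(a,b)$ — this follows from the rational-basis/approximation argument already carried out for $\tilde\vartheta_\hom$ in Steps 1–4 of Proposition \ref{limsurf2} together with the identification $\vartheta_\hom=\tilde\vartheta_\hom$ (Proposition \ref{limsurf2}) — and combining this with the uniform (in $\nu_1$) estimate \eqref{propsurf} yields joint continuity on $\M\times\M\times\SN$ by a standard triangle-inequality argument.

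The main obstacle is the bookkeeping in the collar construction for \eqref{propsurf}: one must simultaneously respect the $\M$-valued constraint, match the prescribed boundary data $u_{a_i,b_i,\nu}$ on all faces of $tQ_\nu$ (including the delicate lateral strip near $\{x_\nu=0\}$ where the two constant values meet), and keep the gradient of the interpolation bounded independently of $t$ so that the collar energy is genuinely $O(t^{N-1})$ and not larger. This is exactly the kind of explicit gluing already performed (in a slightly different guise) in the proof of Proposition \ref{limsurf2}, so the technique is available; the work is in adapting it to the cube $tQ_\nu$ with the two-sided boundary datum rather than the unit cube with a geodesic-profile datum.
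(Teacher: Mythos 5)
Your proposal is correct and follows the same architecture as the paper's proof: build near-admissible competitors explicitly from geodesics, pay a transition cost in a collar that scales like $\beta\cdot t^{N-1}\cdot(|a_1-a_2|+|b_1-b_2|)$, divide by $t^{N-1}$, and then combine the resulting Lipschitz estimate in $(a,b)$ with the already-established continuity of $\tilde\vartheta_\hom(a,b,\cdot)$ (Steps~3--4 of Proposition~\ref{limsurf2} plus the identification $\vartheta_\hom=\tilde\vartheta_\hom$) to obtain joint continuity.

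The one place your route differs noticeably is the choice of boundary-condition class. You work directly with $\A_t$ (piecewise-constant datum $u_{a,b,\nu}$) throughout, extending a near-optimal $\varphi_t\in\A_t(a_1,b_1,\nu)$ outward by a width-$O(1)$ collar carrying a $u_{a_1,b_1,\nu}\to u_{a_2,b_2,\nu}$ transition. The paper instead starts from a near-optimal $u_{\tilde\e}\in\mathcal B_{\tilde\e}(a_1,b_1,\nu)$ (geodesic-profile datum), rescales it into $Q_{\nu,\e}$, and fills the collar $Q_\nu\setminus Q_{\nu,\e}$ to produce $v_\e\in\A_1(a_2,b_2,\nu)$, using the explicit partition $A_1,\dots,A_5$ and the Lipschitz maps $F_\e$, $G$ together with the coarea formula. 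The advantage of the paper's choice is that the \emph{inner} boundary datum in the collar is the continuous geodesic profile $\g_{\tilde\e}(x_\nu/\tilde\e)$, so only one jump (the outer one) needs to be matched; in your version both the inner and the outer data jump across $\{x_\nu=0\}$, so the corner interpolation near the jump line on the lateral faces (your $A_4$/$A_5$ analogue) has to reconcile two discontinuities at once. This is doable --- the relevant gradient blows up like $1/|x_\nu|$ and contributes only $O(t^{-1})$ after normalization, as in the paper's estimates \eqref{A4} --- but it is strictly harder bookkeeping, and this is precisely what Proposition~\ref{limitsurfenerg} together with the $\mathcal B_\e$ reformulation was designed to spare you. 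Your Step for \eqref{propsurf2} is also correct; note that the paper's version is even shorter, since in $\mathcal B_\e$ the competitor $u_\e(x)=\g(x_\nu/\e)$ is admissible with \emph{no} lateral correction at all, and then \eqref{idsurfen} transfers the bound to $\vartheta_\hom$.
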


\begin{proof}
We use the notation of the previous proof. By Proposition
\ref{limitsurfenerg} together with steps 3 and 4 of the proof of
Proposition \ref{limsurf2}, we get that $\vartheta_\hom(a,b,\cdot)$
is continuous on $\SN$ uniformly with respect to $a$ and $b$. Hence
it is enough to show that (\ref{propsurf}) holds to get the
continuity of $\vartheta_\hom$.
\vskip5pt

{\bf Step 1.} We start with the proof of \eqref{propsurf}. Fix
$\nu_1 \in \SN$ and let $\nu=(\nu_1,\nu_2,\ldots,\nu_N)$ be any
orthonormal basis of $\Rb^N$.
For every $\e>0$, let $\tilde
\e:= \e/(1-\e)$ and consider $\g_{\tilde \e} \in \mathcal
G(a_1,b_1)$ and $u_{\tilde \e} \in \mathcal B_{\tilde
\e}(a_1,b_1,\nu)$ such that $u_{\tilde \e}(x)=\g_{\tilde \e}(x_\nu/\tilde \e)$ for
$x\in\partial Q_\nu$ and
$$\int_{Q_\nu} f^\infty\left(\frac{x}{\tilde \e},\nabla u_{\tilde
\e}\right) dx \leq I_{\tilde \e}(a_1,b_1,\nu) + \e\,.$$
We shall now
carefully modify $u_{\tilde \e}$ in order to
get another function $v_\e \in \mathcal A_1(a_2,b_2,\nu)$. We will
proceed as in the proofs of Propositions \ref{limitsurfenerg} and
\ref{limsurf2}. Let $\g_a \in \mathcal G(a_2,a_1)$ and $\g_b \in
\mathcal G(b_2,b_1)$, and define
$$v_\e(x):=\left\{
\begin{array}{lll}
\ds u_{\tilde\e}\left(\frac{x}{1-\e}\right) & \text{ if } & x \in
Q_{\nu,\e}\,,\\[0.3cm]
\ds \g_{\tilde\e}\left(\frac{x_\nu}{1-2\|x'\|_{\nu,\infty}} \right)
& \text{ if } & \ds x \in A_1\,,\\[0.3cm]
\ds \g_a\left(\frac{2\|x\|_{\nu,\infty}-1}{\e}+\frac{1}{2} \right) &
\text{ if } & \ds  x \in A_2:=(Q_\nu\setminus
Q_{\nu,\e})\cap\{x_\nu\geq \e/2\}\,,\\[0.3cm]
\ds \g_b\left(\frac{2\|x\|_{\nu,\infty}-1}{\e}+\frac{1}{2} \right) &
\text{ if } & \ds  x \in A_3:=(Q_\nu\setminus Q_{\nu,\e})\cap\{x_\nu\leq -\e/2\}\,,\\[0.3cm]
\ds \g_a\left(\frac{2\|x'\|_{\nu,\infty}-1}{2x_\nu}+\frac{1}{2}
\right) & \text{ if } & \ds x \in A_4:=\left\{ 0 < x_\nu \leq
\frac{\e}{2}\,,\;
\frac{1}{2}-x_\nu \leq \|x'\|_{\nu,\infty} < \frac{1}{2}\right\}\,,\\[0.3cm]
\ds \g_b\left(\frac{1-2\|x'\|_{\nu,\infty}}{2x_\nu}+\frac{1}{2}
\right) & \text{ if } & \ds x \in A_5:=\left\{ -\frac{\e}{2} < x_\nu
\leq 0\,,\; \frac{1}{2}+x_\nu \leq \|x'\|_{\nu,\infty} <
\frac{1}{2}\right\}\,,
\end{array}\right.$$
with
$$ A_1:=\left\{ \frac{1-\e}{2} \leq
\|x'\|_{\nu,\infty} < \frac{1}{2}
\text{ and } |x_\nu|\leq -\|x'\|_{\nu,\infty}+\frac{1}{2}\right\}\,.$$
One may check that the function $v_\e$ has been constructed in such
a way that $v_\e \in \A_1(a_2,b_2,\nu)$, and thus
\begin{equation}\label{Je}J_\e(a_2,b_2,\nu) \leq
\int_{Q_\nu}f^\infty\left(\frac{x}{\e},\nabla v_\e\right) dx\,.
\end{equation}
Arguing exactly as in the proof of Proposition \ref{limsurf2}, one
can show that
\begin{equation}\label{cunu}\int_{Q_{\nu,\e}}
f^\infty\left(\frac{x}{\e},\nabla v_\e\right) dx \leq I_{\tilde
\e}(a_1,b_1,\nu) + \e\,,\end{equation}
and
\begin{eqnarray}\label{A1}\int_{A_1}
f^\infty\left(\frac{x}{\e},\nabla v_\e\right) dx \leq C\mathbf
d_\M(a_1,b_1)(1-(1-\e)^{N-1}))\,.
\end{eqnarray}
Now we only estimate the integrals over $A_2$ and $A_4$, the ones
over $A_3$ and $A_5$ being very similar. Define the Lipschitz
function $F_\e:\Rb^{N} \to \Rb$ by
$$F_\e(x):=\frac{2\|x\|_{\nu,\infty}-1}{\e}+\frac{1}{2}\,.$$
Using the growth condition (\ref{finfty1gc}) together with Fubini's
theorem, and the fact that $A_2 \subset
F_\e^{-1}\big([-1/2,1/2)\big)$, we derive
\begin{multline*}
\int_{A_2} f^\infty\left(\frac{x}{\e},\nabla v_\e\right) dx  \leq
\b \int_{A_2} |\dot\g_a (F_\e(x))|\, |\nabla F_\e(x)|\, dx
 \leq\\
 \leq  \b \int_{F_\e^{-1}([-1/2,1/2))} |\dot\g_a
(F_\e(x))|\, |\nabla F_\e(x)|\, dx
 \leq  \b \int_{-1/2}^{1/2}|\,\dot \g_a(t)| \,
\HH^{N-1}(F_\e^{-1}\{t\})\, dt
\,,
\end{multline*}
where we used the Coarea
formula in the last inequality.
We observe that for every $t\in(-1/2,1/2)$, $F^{-1}_\e\{t\}=\partial Q_{\nu,\frac{\e(1-2t)}{2}}$ so that
$\HH^{N-1}(F_\e^{-1}\{t\})\leq \HH^{N-1}(\partial Q)$. Therefore
\begin{eqnarray}\label{A2}\int_{A_2}
f^\infty\left(\frac{x}{\e},\nabla v_\e\right) dx \leq \b
\HH^{N-1}(\partial Q)\mathbf d_\M(a_1,a_2)\,.\end{eqnarray} Define
now $G : \Rb^N\setminus\{x_\nu=0\} \to \Rb$ by
$$G(x):=\frac{2\|x'\|_{\nu,\infty}-1}{2x_\nu}+\frac{1}{2}\,.$$
The growth condition (\ref{finfty1gc}) and Fubini's theorem yield
\begin{multline*}
\int_{A_4} f^\infty\left(\frac{x}{\e},\nabla v_\e\right) dx \,\leq \\
\leq \b
\int_{0}^{\e/2} \left(\int_{G(\cdot,x_\nu)^{-1}([-1/2,1/2))}
|\dot\g_a (G(x',x_\nu))|\, |\nabla G(x',x_\nu)|\, d\HH^{N-1}(x')
\right)dx_\nu\,.
\end{multline*}
As $|\nabla_{x'} G(x)| = 1/x_\nu$ and
$|\nabla_{x_\nu} G(x)| \leq 1/x_\nu$ for a.e. $x \in A_4$, it
follows that $|\nabla G(x)| \leq 2 |\nabla_{x'} G(x)|$ for a.e.  $x
\in A_4$. Hence
\begin{multline*}
\int_{A_4} f^\infty\left(\frac{x}{\e},\nabla v_\e\right) dx \,\leq \\
\leq 2\b
\int_{0}^{\e/2} \left(\int_{G(\cdot,x_\nu)^{-1}([-1/2,1/2))}
|\dot\g_a (G(x',x_\nu))|\, |\nabla_{x'} G(x',x_\nu)|\,
d\HH^{N-1}(x') \right)dx_\nu\,.
\end{multline*}
For every $x_\nu \in (0,\e/2)$ the
function $G(\cdot,x_\nu):\Rb^{N-1} \to \Rb$ is Lipschitz, and thus the
Coarea formula implies
\begin{eqnarray}\label{A4}
\int_{A_4} f^\infty\left(\frac{x}{\e},\nabla v_\e\right) dx & \leq &
2\b \int_0^{\e/2} \left( \int_{-1/2}^{1/2}|\, \dot\g_a(t)|\,
\HH^{N-2}(\{x': G(x',x_\nu)=t\})\, dt \right)dx_\nu\nonumber\\
&\leq & C\e\, \mathbf d_\M(a_1,a_2)\,,
\end{eqnarray}
where we used as previously  the estimate  $\HH^{N-2}(\{x': G(x',x_\nu)=t\})\leq \HH^{N-2}\big(\partial (\frac{-1}{2},\frac{1}{2})^{N-1}\big)$.
Gathering (\ref{Je}) to (\ref{A4}) and considering the analogous estimates
for the integrals over $A_3$ and $A_5$ (with $b_1$ and $b_2$ instead
of $a_1$ and $a_2$), we infer that
$$J_\e(a_2,b_2,\nu)
\leq \int_{Q_\nu}f^\infty\left(\frac{x}{\e},\nabla v_\e\right) dx
\leq I_{\tilde\e}(a_1,b_1,\nu) + C\big(\e + \mathbf d_\M(a_1,a_2) +
\mathbf d_\M(b_1,b_2)\big)\,.$$ Taking the limit as $\e \to 0$, we
get in light of Propositions \ref{limitsurfenerg} and \ref{limsurf2}
that
$$\vartheta_\hom(a_2,b_2,\nu) \leq \vartheta_\hom(a_1,b_1,\nu) + C \big( \mathbf
d_\M(b_1,b_2) + \mathbf d_\M(a_1,a_2) \big)\,.$$ Since the geodesic
distance on $\M$ is equivalent to the Euclidian distance, we
conclude, possibly exchanging the roles of $(a_1,b_1)$ and
$(a_2,b_2)$, that (\ref{propsurf}) holds. \vskip5pt

{\bf Step 2.} We now prove \eqref{propsurf2}. Given an arbitrary orthonormal basis $\nu=(\nu_1,\ldots,\nu_N)$ of $\Rb^N$,  let $\g \in
\mathcal G(a_1,b_1)$ and define $u_\e(x):=\g(x_\nu/\e)$. Obviously
$u_\e \in \mathcal B_\e(a_1,b_1,\nu)$. Using (\ref{idsurfen})
together with the growth condition (\ref{finfty1gc}) satisfied by
$f^\infty$, we derive that
$$\vartheta_\hom(a_1,b_1,\nu_1) \leq \liminf_{\e \to
0}\int_{Q_\nu}f^\infty\left(\frac{x}{\e},\nabla u_\e\right)dx \leq
\liminf_{\e \to 0} \frac{\b}{\e}\int_{Q_\nu}
\left|\dot\g\left(\frac{x\cdot\nu_1}{\e}\right)\right|\,
dx=\b\mathbf d_{\M}(a_1,b_1)\,.$$ Then (\ref{propsurf2}) follows
from the equivalence between  $\mathbf d_{\M}$ and the Euclidian
distance.
\end{proof}

\section{Localization and integral repersentation on partitions}\label{BV}

In this section we first show that the $\Gamma$-limit defines a measure. Then we prove an abstract representation on
partitions in sets of finite perimeter.  This two facts will allow us to obtain the upper bound on the $\Gamma$-limit in the next section.

\subsection{Localization}

We consider an arbitrary given sequence
$\{\e_n\} \searrow 0^+$ and we localize the functionals
$\{\F_{\e_n}\}_{n\in\Nb}$ on the family $\A(\O)$, {\it i.e.}, for
every $u \in L^1(\O;\Rb^d)$ and every $A \in \A(\O)$, we set
$$\F_{\e_n}(u,A):= \begin{cases}
\ds \int_A
f\left(\frac{x}{\e_n},\nabla u\right) dx & \text{if }u \in
W^{1,1}(A;\M)\,,\\[8pt]
+\infty & \text{otherwise}\,.
\end{cases}$$
Next we define for $u\in L^1(\O;\Rb^d)$ and  $A \in \A(\O)$,
\begin{equation*}
\F(u,A):= \inf_{\{u_n\}} \bigg\{ \liminf_{n \to +\infty}\, \F_{\e_n}
(u_n,A)\, :\, u_n \to u \text{ in }L^1(A;\Rb^d) \bigg\}\,.
\end{equation*}
Note that $\F(u,\cdot)$ is an increasing set function for every
$u\in L^1(\O;\Rb^d)$ and that $\F(\cdot,A)$ is  lower semicontinuous
with respect to the strong  $L^1(A;\Rb^d)$-convergence for every
$A\in \A(\O)$.

Since $L^1(A;\Rb^d)$ is separable, \cite[Theorem~8.5]{DM} and a
diagonalization argument bring the existence of a subsequence (still
denoted $\{\e_n\}$) such that $\F(\cdot,A)$ is the $\G$-limit of
$\F_{\e_n}(\cdot,A)$ for the strong $L^1(A;\Rb^d)$-topology for
every $A \in \R(\O)$ (or $A=\O$). \vskip5pt

We have the following locality property of the
$\G$-limit which, in the $BV$ setting, parallels \cite[Lemma 3.1]{BM}.

\begin{lemma}\label{measbis}
For every $u \in BV(\O;\M)$, the set function $\F(u,\cdot)$ is the
restriction to $\A(\O)$ of a Radon measure absolutely continuous
with respect to $\LL^N+|Du|$.
\end{lemma}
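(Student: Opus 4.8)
The plan is to prove that $\F(u,\cdot)$ is (the trace of) a Radon measure via De Giorgi–Letta's criterion, so the task reduces to checking the four standard properties of a set function: it is increasing, inner regular, superadditive, and subadditive. The absolute continuity with respect to $\LL^N+|Du|$ will then follow by comparing $\F(u,A)$ with an explicit finite upper bound on any $A$.

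First I would record the easy properties. Monotonicity of $\F(u,\cdot)$ is immediate from the definition, and superadditivity on disjoint open sets follows from the superadditivity of $\liminf$ together with the fact that $\F_{\e_n}(u_n,\cdot)$ is additive on disjoint open subsets (being an integral), using that an $L^1$-converging sequence on $A\cup B$ converges on each piece. For the finite upper bound I would use a recovery sequence: since $u\in BV(\O;\M)$, Theorem~\ref{density} (together with a projection/approximation argument, or simply the classical fact that $BV$ functions into $\M$ can be approximated so as to control the total variation) and the growth bound $(H_2)$ give, for every $A\in\A(\O)$,
$$\F(u,A)\le \beta\bigl(\LL^N(A)+|Du|(A)\bigr)+\text{(const)},$$
or more precisely a bound of the form $C(\LL^N+|Du|)(A)$ once one invokes the relaxation result; this simultaneously yields inner regularity ($\F(u,A)=\sup\{\F(u,A'):A'\Subset A\}$) because the outer measure $\LL^N+|Du|$ is inner regular and $\F(u,\cdot)$ is squeezed between a superadditive increasing set function and this Radon measure. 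The standard way to get inner regularity rigorously is the classical "cut-off / fundamental estimate" argument.

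The main obstacle is subadditivity: given $A$, $B$, $C$ open with $C\Subset B$, one must show $\F(u,A\cup B)\le \F(u,(A\cup C)\cup \text{nbhd})\le \F(u,A')+\F(u,B)$ for suitable $A'$, i.e. the \emph{fundamental estimate} for the family $\{\F_{\e_n}\}$. This is where the manifold constraint bites: one cannot simply glue two competitors $u_n$ on $A$ and $v_n$ on $B$ by a scalar cut-off function $\theta$, because the convex combination $\theta u_n+(1-\theta)v_n$ need not take values in $\M$. The remedy, exactly as in \cite{BM}, is to perform the cut-off in a thin transition layer where $u_n$ and $v_n$ are already $L^1$-close, use the Lipschitz bound $(H_3)$ on $f$ to estimate $f(x/\e_n,\nabla(\theta u_n+(1-\theta)v_n))$ by $f$ evaluated at a convex-combination gradient plus an error $L|\nabla\theta|\,|u_n-v_n|$, then project the glued map back onto $\M$ using Proposition~\ref{proj} (after a smoothing), the projection costing only a fixed multiplicative constant $C_\star$ in the $W^{1,1}$-norm; choosing the layer and a dyadic family of cut-offs so that the $|\nabla\theta|\,|u_n-v_n|$ and "bad layer" contributions are negligible (this uses that $\sum$ over disjoint layers of the energies is bounded, by the upper bound above, so one layer is cheap), one obtains the fundamental estimate with no loss on the leading constant.

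Finally I would assemble the pieces: monotone + superadditive + subadditive + inner regular $\Rightarrow$ by De Giorgi–Letta's measure criterion (see \cite[Section~... / standard reference]{DM} or \cite{AFP}), $\F(u,\cdot)$ extends to a Borel measure on $\O$, which is finite (hence Radon) by the upper bound $\F(u,\O)\le C(\LL^N+|Du|)(\O)<\infty$, and its absolute continuity with respect to $\LL^N+|Du|$ is precisely that same upper bound applied to arbitrary $A\in\A(\O)$ and then to Borel sets. I expect the write-up to lean heavily on the already-proved Proposition~\ref{proj} and on quoting the fundamental-estimate machinery from \cite{BM}, so the novel content is mainly verifying that the construction there is compatible with passing to the $BV$ setting and with the localized $\G$-limit $\F(u,\cdot)$.
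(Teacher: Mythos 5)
Your proposal is correct and follows essentially the same route as the paper: upper bound $\F(u,A)\le\beta(\LL^N(A)+C_\star|Du|(A))$ obtained by smoothing $u$ (AFP Thm.~3.9 gives $u_n\in W^{1,1}\cap\C^\infty$ with $\int|\nabla u_n|\to|Du|(A)$ and values in $\co(\M)$) and projecting via Proposition~\ref{proj}; subadditivity $\F(u,A)\le\F(u,B)+\F(u,A\setminus\overline C)$ by gluing two recovery sequences (taken in $\D$ using Theorem~\ref{density}) across a De Giorgi slicing, projecting the glued map back onto $\M$ with Proposition~\ref{proj}, and selecting a cheap slice by averaging; then the De Giorgi--Letta/measure criterion (which the paper outsources to \cite[Lemma~3.1]{BM}) and the upper bound give the Radon measure and its absolute continuity. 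Your small deviations (dyadic layers vs.\ $n$ uniform slices, quoting $(H_3)$ instead of just $(H_2)$ for the transition-layer estimate, citing Theorem~\ref{density} where the upper bound actually uses the AFP smooth approximation) are cosmetic and do not change the substance of the argument.
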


\begin{proof}
Let $u \in BV(\O;\M)$ and $A \in \A(\O)$. By Theorem 3.9 in
\cite{AFP}, there exists a sequence $\{u_n\} \subset
W^{1,1}(A;\Rb^d) \cap \C^\infty(A;\Rb^d)$ such that $u_n \to u$ in
$L^1(A;\Rb^d)$ and $\int_A |\nabla u_n|\, dx \to |Du|(A)$. Moreover,
$u_n(x)\in {\rm co}(\M)$ for a.e. $x \in A$ and every $n \in \Nb$.
Applying Proposition \ref{proj} to $u_n$, we obtain a new sequence
$\{w_n\} \subset W^{1,1}(A;\M)$ satisfying
$$\int_A |\nabla w_n|\, dx \leq C_\star \int_A |\nabla u_n|\, dx,$$ for
some constant $C_\star>0$ depending only on $\M$ and $d$. From
construction of $w_n$, we have that $w_n \to u$ in $L^1(A;\Rb^d)$.
Taking $\{w_n\}$ as admissible sequence, we deduce in light of the
growth condition $(H_2)$ that
\begin{equation*}
\F(u,A) \leq \beta\big(\LL^N(A)+C_\star|Du|(A)\big)\,.
\end{equation*}

We now prove  that
\begin{equation*}
\F(u,A) \leq \F(u,B) + \F(u,A \setminus \overline C)
\end{equation*}
for every $A$, $B$ and $C \in \A(\O)$ satisfying $\overline C
\subset B \subset A$. Then the measure property of
$\F(u,\cdot)$ can be obtained as in the proof of \cite[Lemma 3.1]{BM} with
minor modifications.  For this reason, we  shall omit it.

Let $R \in \R(\O)$ such that $C \subset\subset R \subset\subset B$
and consider $\{u_n\} \subset W^{1,1}(R;\M)$ satisfying $u_n \to u$
in $L^1(R;\Rb^d)$ and
\begin{equation}\label{u_nbis}
\lim_{n \to +\infty} \F_{\e_n}(u_n,R) = \F(u,R)\,.
\end{equation}
Given $\eta>0$ arbitrary, there exists a sequence $\{v_n\} \subset
W^{1,1}(A \setminus \overline C;\M)$ such that $v_n \to u$ in
$L^1(A\setminus \overline C;\Rb^d)$ and
\begin{equation}\label{v_nbis}
\liminf_{n \to +\infty}\, \F_{\e_n}(v_n,A \setminus \overline C) \leq
\F(u,A \setminus \overline C) + \eta\,.
\end{equation}
By Theorem \ref{density}, we can assume without loss of generality
that $u_n \in \D(R;\M)$ and $v_n \in \D(A \setminus \overline
C;\M)$. Let $L:=\dist(C,\partial R)$ and define for every $i \in
\{0,\ldots,n\}$,
$$R_i:=\bigg\{x \in R:\, \dist(x,\partial R)
>\frac{iL}{n}\bigg\}\,.$$
Given $i \in \{0,\ldots,n-1\}$, let $S_i:=R_i \setminus
\overline{R_{i+1}}$ and consider a cut-off function $\zeta_i \in
\C^\infty_c(\O;[0,1])$ satisfying $\zeta_i(x) = 1$ for $x\in
R_{i+1}$, $\zeta_i(x) = 0$ for $x\in \O\setminus R_{i}$ and $|\nabla
\zeta_i|\leq 2n/L$. Define
$$z_{n,i}:=\zeta_i u_n + (1-\zeta_i)v_n \in W^{1,1}(A;\Rb^d)\,.$$
If $\pi_1(\M) \neq 0$, $z_{n,i}$ is smooth in $A\setminus
\Sigma_{n,i}$ with $\Sigma_{n,i} \in \mathcal S$, while $z_{n,i}$ is
smooth in $A$ if $\pi_1(\M) = 0$. Observe that $z_{n,i}(x) \in {\rm
co}(\M)$ for a.e. $x \in A$ and actually, $z_{n,i}$ fails to be
$\M$-valued exactly in the set $S_i$. To get an admissible sequence,
we project $z_{n,i}$ on $\M$ using Proposition \ref{proj}. It yields
a sequence $\{w_{n,i}\} \subset W^{1,1}(A;\M)$ satisfying $w_{n,i}
=z_{n,i}$ a.e. in $A \setminus S_i$,
\begin{equation}\label{L1}
\int_A|w_{n,i} -u |\, dx \leq \int_A |z_{n,i} - u|\, dx+C
\LL^N(S_i)\,,
\end{equation}
for some constant $C>0$ depending only on the diameter of ${\rm
co}(\M)$, and
$$\int_{S_i} |\nabla w_{n,i}|\, dx  \leq C_\star \int_{S_i} |\nabla z_{n,i}|\, dx
 \leq C_\star \int_{S_i}\left(|\nabla u_n| + |\nabla v_n| +
\frac{n}{2L} |u_n - v_n|\right)\, dx\,.$$ Arguing exactly as in the
proof of \cite[Lemma 3.1]{BM}, we now find an index $i_n \in
\{0,\ldots,n-1\}$ such that
\begin{multline}\label{nin}
\F_{\e_n}(w_{n,i_n},A)  \leq
\F_{\e_n}(u_n,R)+\F_{\e_n}(v_n,A \setminus \overline C)\,+\\
+ C_0 \int_{R \setminus \overline C}|u_n - v_n|\, dx +
\frac{C_0}{n}\sup_{k \in \Nb}\int_{R \setminus \overline C}(1+|\nabla
u_k|+ |\nabla v_k|)\, dx\,,
\end{multline}
for some constant $C_0$ independent of $n$.

A well
known consequence of the Coarea formula yields (see, {\it e.g.},
\cite[Lemma 3.2.34]{Federer}),
\begin{equation}\label{vanslice}
\LL^N(S_{i_n})  =  \int_{i_n
L/n}^{(i_n+1)L/n} \HH^{N-1}(\{x \in R: \dist(x,\partial R)=t\})\, dt \to 0 \quad\text{as $n \to
+\infty$\,.}
\end{equation}
As a consequence of  (\ref{L1}) and \eqref{vanslice},  $w_{n,i_n}
\to u$ in $L^1(A;\Rb^d)$. Taking the $\liminf$ in
(\ref{nin}) and using (\ref{u_nbis}) together with (\ref{v_nbis}), we derive
$$\F(u,A) \leq \F(u,R) + \F(u,A \setminus \overline C)+\eta \leq \F(u,B) + \F(u,A \setminus \overline C)+\eta\,.$$
The conclusion follows  from the arbitrariness of $\eta$.
\end{proof}

\begin{remark}\label{measconstr}
In view of Lemma \ref{measbis}, for every $u\in BV(\O;\M)$, the set
function  $\F(u,\cdot)$ can be uniquely extended to a Radon measure
on $\O$. Such a measure is given by
$$\F(u,B):= \inf \big\{\F(u,A)\,:\,A\in\A(\O),\,B\subset A\big\}\,, $$
for every $B\in\mathcal{B}(\O)$ (see, \emph{e.g.}, \cite[Theorem
1.53]{AFP}).
\end{remark}

\subsection{Integral representation on partitions}

Besides the locality of $\F(u,\cdot)$, another key point of the
analysis is to prove an abstract integral representation on
partitions. Similarly to {\it e.g.} \cite[Lemma 3.7]{BDV}, using
$(H_1)$ we easily obtain the translation invariance property of the
$\G$-limit, the proof of which is omitted.

\begin{lemma}\label{invtrans}
For every $u \in BV(\O;\M)$, every $A \in \A(\O)$ and every
$y\in\Rb^N$ such that $y+A \subset \O$, we have
$$\F(\tau_yu,y+A)=\F(u,A) \,,$$
where $(\tau_yu)(x):=u(x-y)$.
\end{lemma}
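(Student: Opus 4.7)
The plan is to prove both inequalities and reduce to one direction by symmetry: if $\F(\tau_z w, z+B) \leq \F(w, B)$ holds generally, then applying it with $w = \tau_y u$, $z = -y$, and $B = y+A$ yields the reverse inequality. So I would focus on proving
\[
\F(\tau_y u, y+A) \leq \F(u,A).
\]
Since by Lemma~\ref{measbis} and Remark~\ref{measconstr} the set function $\F(\tau_y u,\cdot)$ is (the restriction of) a Radon measure on $\O$, it is inner regular on open sets, so it suffices to establish $\F(\tau_y u, y+A') \leq \F(u,A)$ for every $A' \in \A(\O)$ with $A' \subset\subset A$, and then take the supremum over such $A'$.

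Fix such an $A'$ and select a recovery sequence $u_n \to u$ in $L^1(A;\Rb^d)$ with $\F_{\e_n}(u_n,A) \to \F(u,A)$; without loss of generality $u_n \in W^{1,1}(A;\M)$ for every $n$. The periodicity hypothesis $(H_1)$ invites translation by integer multiples of $\e_n$, so I would introduce the ``periodic'' approximation of $y$ given by $y_n := \e_n \lfloor y/\e_n \rfloor$, which belongs to $\e_n \Zb^N$ and converges to $y$. For $n$ large enough we have $(y - y_n) + A' \subset A$, so the function
\[
v_n(x) := u_n(x - y_n), \qquad x \in y + A',
\]
is well defined and lies in $W^{1,1}(y+A';\M)$. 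The $L^1$-convergence $v_n \to \tau_y u$ on $y+A'$ follows from splitting
\[
\int_{y+A'} |v_n - \tau_y u|\, dx \leq \int_{(y-y_n)+A'}|u_n - u|\, dz + \int_{y+A'}|u(x-y_n) - u(x-y)|\, dx,
\]
where the first term vanishes since $(y-y_n)+A'\subset A$ for $n$ large, and the second by continuity of translation in $L^1$ (extending $u$ by zero outside $\O$).

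The whole point of the integer shift is that, since $y_n/\e_n \in \Zb^N$, $(H_1)$ gives $f((z+y_n)/\e_n,\xi) = f(z/\e_n,\xi)$, so a change of variables yields
\[
\F_{\e_n}(v_n, y+A') = \int_{(y-y_n)+A'} f\!\left(\frac{z}{\e_n},\nabla u_n(z)\right)\,dz \leq \F_{\e_n}(u_n,A).
\]
Taking the $\liminf$ gives $\F(\tau_y u, y+A') \leq \F(u,A)$, and then inner regularity of $\F(\tau_y u,\cdot)$ finishes the argument. The only real difficulty is the boundary mismatch between $y+A$ and $y_n+A$: one cannot translate by $y$ directly because $y/\e_n$ is generically not an integer, so periodicity breaks down; shrinking to a compactly contained $A'$ absorbs this arbitrarily small error, and the measure-theoretic inner regularity from Lemma~\ref{measbis} is precisely what makes this localization lossless.
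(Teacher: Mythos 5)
Your argument is correct and is essentially the standard proof the paper has in mind (the paper omits it, referring to \cite[Lemma 3.7]{BDV}): translate a recovery sequence by the integer shift $y_n=\e_n\lfloor y/\e_n\rfloor\in\e_n\Zb^N$, use the periodicity $(H_1)$ under a change of variables, and absorb the boundary mismatch by working on $A'\subset\subset A$ and concluding by inner regularity, with the reverse inequality obtained by symmetry. One caveat on the citation: Lemma \ref{measbis} is stated for maps in $BV(\O;\M)$, whereas $\tau_y u$ is defined only on $y+\O$, so it does not literally yield that $\F(\tau_y u,\cdot)$ is a Radon measure on $\A(\O)$; what your argument actually needs is only inner regularity at the open set $y+A$, and this does hold because $\tau_y u\in BV(y+A;\M)$ and the proof of Lemma \ref{measbis} is local, so it applies verbatim with $y+A$ in place of $\O$ --- a one-line repair, but worth stating, since it cannot be sidestepped by applying inner regularity to $\F(u,\cdot)$ alone (in either direction the inner regularity is required for the function appearing on the left-hand side of the inequality, and in one of the two directions that function is necessarily the translated one).
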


We are now in position to prove the integral representation of
the $\G$-limit on partitions. 

\begin{proposition}\label{reppart}
There exists a unique function $K:\M\times\M\times\SN\to[0,+\infty)$
continuous in the last variable and such that
\newline
\emph{(i)} $K(a,b,\nu)=K(b,a,-\nu)$ for every
$(a,b,\nu)\in\M\times\M\times\SN$,
\newline
\emph{(ii)} for every finite subset $T$ of $\M$,
\begin{equation}\label{intsurfbor}
\F(u,S)=\int_{S} K(u^+,u^-,\nu_u)\, d\HH^{N-1}\,,
\end{equation}
for every $u\in BV(\O;T)$ and every Borel subset $S$ of $\O\cap S_u\,$.
\end{proposition}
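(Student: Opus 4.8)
The plan is to produce $K$ as the blow-up density of the $\G$-limit $\F$ at the jump points of piecewise constant maps, following the by-now standard blow-up scheme for surface functionals (cf.\ \cite{BM,BDV}). First, for $u\in BV(\O;T)$ with $T\subset\M$ finite, I would use that $u$ is piecewise constant: $\nabla u=0$ $\LL^N$-a.e., $D^cu=0$, $|Du|=|u^+-u^-|\,\HH^{N-1}\res S_u$. By Lemma~\ref{measbis} the set function $\F(u,\cdot)$ is the restriction to $\A(\O)$ of a Radon measure absolutely continuous with respect to $\LL^N+|Du|$, so restricting to $S_u$ and using $\LL^N(S_u)=0$ gives $\F(u,\cdot)\res S_u\ll\HH^{N-1}\res S_u$, while the upper bound $\F(u,A)\le\beta(\LL^N(A)+C_\star|Du|(A))$ established in that proof gives $\F(u,\cdot)\res(\O\setminus S_u)\ll\LL^N$. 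Denote by $g_u\in L^1(\HH^{N-1}\res S_u)$ the resulting Radon--Nikodym density. Then I would invoke the Besicovitch differentiation theorem along the cubes $Q_{\nu_u(x_0)}(x_0,\rho)$: at $\HH^{N-1}$-a.e.\ $x_0\in S_u$ one has $\rho^{1-N}\HH^{N-1}(S_u\cap Q_{\nu_u(x_0)}(x_0,\rho))\to1$ by rectifiability and $\F(u,\cdot)\res(\O\setminus S_u)(Q_{\nu_u(x_0)}(x_0,\rho))\le\beta\rho^N=o(\rho^{N-1})$, hence
\[
g_u(x_0)=\lim_{\rho\to0^+}\rho^{1-N}\,\F\big(u,Q_{\nu_u(x_0)}(x_0,\rho)\big)\,.
\]

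\textbf{Definition of $K$.} For $(a,b,\nu)\in\M\times\M\times\SN$ I would let $u_{a,b,\nu}$ be the jump map equal to $a$ on $\{x\cdot\nu>0\}$ and to $b$ on $\{x\cdot\nu\le0\}$ (coinciding on $Q_\nu$ with the function of Section~\ref{sectsurf}). Since $u_{a,b,\nu}$ is invariant under the translations parallel to $\{x\cdot\nu=0\}$, Lemma~\ref{invtrans} forces $\F(u_{a,b,\nu},\cdot)\res\{x\cdot\nu=0\}$ to be translation invariant along that hyperplane, hence equal to $K(a,b,\nu)\,\HH^{N-1}\res\{x\cdot\nu=0\}$ for some constant $K(a,b,\nu)\ge0$, finite because $\rho^{1-N}\F(u_{a,b,\nu},Q_\nu(0,\rho))\le\beta(\rho+C_\star|a-b|)$. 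On each connected component of $\{x\cdot\nu\ne0\}$ the functional $\F(u_{a,b,\nu},\cdot)$ agrees, by locality, with the ($\LL^N$-absolutely continuous, translation invariant) one of the corresponding constant map, so $\rho^{1-N}\F(u_{a,b,\nu},Q_\nu(0,\rho))\to K(a,b,\nu)$. Using Lemma~\ref{invtrans} once more I would write $\F(u_{a,b,\nu},Q_\nu(0,\rho))=\F(\bar u_{x_0},Q_{\nu}(x_0,\rho))$, where $\bar u_{x_0}$ is the jump map through $x_0$ with triple $(a,b,\nu)=(u^+(x_0),u^-(x_0),\nu_u(x_0))$; then it remains to show that at $\HH^{N-1}$-a.e.\ $x_0\in S_u$, with $\nu=\nu_u(x_0)$,
\[
\rho^{1-N}\big[\F\big(u,Q_{\nu}(x_0,\rho)\big)-\F\big(\bar u_{x_0},Q_\nu(x_0,\rho)\big)\big]\longrightarrow0\quad\text{as }\rho\to0\,,
\]
which combined with the first step forces $g_u(x_0)=K(u^+(x_0),u^-(x_0),\nu_u(x_0))$, whence $\F(u,\cdot)\res S_u=K(u^+,u^-,\nu_u)\,\HH^{N-1}\res S_u$ and thus \eqref{intsurfbor}.

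\textbf{The crux, the properties of $K$, and the expected main obstacle.} To prove the displayed convergence I would use that at a jump point the rescalings $u(x_0+\rho\,\cdot)$ converge to $\bar u_{x_0}(x_0+\,\cdot)$ in $L^1_{\rm loc}$, so that $\rho^{-N}\|u-\bar u_{x_0}\|_{L^1(Q_\nu(x_0,\rho))}\to0$, and then run a gluing/blow-up comparison: from nearly optimal sequences $u_n\to u$ (for $\F(u,Q_\nu(x_0,(1+\eta)\rho))$) and $v_n\to\bar u_{x_0}$, one interpolates between them in a thin annular layer inside $Q_\nu(x_0,(1+\eta)\rho)\setminus Q_\nu(x_0,\rho)$, the extra energy being controlled via $(H_2)$, the width of the layer and the smallness of $\rho^{-N}\|u-\bar u_{x_0}\|_{L^1}$; as the interpolant only takes values in ${\rm co}(\M)$, I would project it back onto $\M$ by Proposition~\ref{proj}. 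Letting $n\to\infty$, then $\rho\to0$, then $\eta\to0$ gives one inequality, and the symmetric construction the other. The symmetry $K(a,b,\nu)=K(b,a,-\nu)$ is then immediate, since $g_u(x_0)$ above does not depend on the orientation chosen for $\nu_u(x_0)$, which merely swaps $u^\pm(x_0)$; the continuity of $K(a,b,\cdot)$ on $\SN$, uniform in $(a,b)$, I would obtain by comparing nearly optimal configurations for $u_{a,b,\nu}$ and $u_{a,b,\nu'}$ across close orientations through the explicit cut-off construction used in Step~3 of the proof of Proposition~\ref{limsurf2}, now performed on the functionals $\F_{\e_n}$; and uniqueness of $K$ is forced by testing \eqref{intsurfbor} on cubes. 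I expect the displayed convergence to be the main obstacle: showing that the blow-up density at a jump point is insensitive to the precise piecewise constant map and depends only on the jump triple. It is delicate both because the manifold constraint rules out affine interpolation, so the projection of Proposition~\ref{proj} must be inserted in the gluing with a controlled $W^{1,1}$ cost, and because $\F$, being a $\G$-limit of homogenizing functionals, cannot be rescaled and must be handled throughout via recovery sequences and the defining $\liminf$.
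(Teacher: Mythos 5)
Your proposal takes a genuinely different route from the paper. The paper's proof of Proposition~\ref{reppart} is essentially one line: it cites the abstract integral representation theorem of Ambrosio--Braides \cite[Theorem~3.1]{AB}, through \cite[Proposition~4.2]{BDV}, and simply checks its hypotheses --- the measure property and absolute continuity (Lemma~\ref{measbis}, Remark~\ref{measconstr}), translation invariance (Lemma~\ref{invtrans}), locality and lower semicontinuity. You instead propose to re-derive the representation from scratch by a blow-up argument, defining $K(a,b,\nu)$ as the blow-up density of $\F(u_{a,b,\nu},\cdot)$ along the hyperplane. That is indeed the spirit of the \cite{AB} theorem, and the preparatory steps you outline (existence of the Radon--Nikod\'ym density $g_u$ on $S_u$, translation invariance forcing the density of $\F(u_{a,b,\nu},\cdot)$ restricted to $\{x\cdot\nu=0\}$ to be a constant, the symmetry $K(a,b,\nu)=K(b,a,-\nu)$, uniqueness by testing on cubes) are all sound.

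However, the step you yourself flag as the crux has a genuine gap. You want to prove, at $\HH^{N-1}$-a.e.\ $x_0\in S_u$,
$$\rho^{1-N}\big[\F(u,Q_\nu(x_0,\rho))-\F(\bar u_{x_0},Q_\nu(x_0,\rho))\big]\to 0\,,$$
by gluing near-optimal sequences $u_n\to u$ and $v_n\to\bar u_{x_0}$ across a thin annulus in $Q_\nu(x_0,(1+\eta)\rho)\setminus Q_\nu(x_0,\rho)$ and projecting onto $\M$. But the glued sequence $w_n$ converges in $L^1$ to the \emph{hybrid} map equal to $u$ on one side of the layer and to $\bar u_{x_0}$ on the other, not to $u$ throughout nor to $\bar u_{x_0}$ throughout. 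It is therefore an admissible competitor for neither $\F(u,Q_\nu(x_0,\rho))$ nor $\F(\bar u_{x_0},Q_\nu(x_0,\rho))$. If you run the fundamental estimate on $w_n$ and then use locality and the measure property to decompose $\F(w,\cdot)$, the contributions of $u$ and $\bar u_{x_0}$ appear on \emph{both} sides of the inequality and cancel, leaving only the trivial monotonicity $\F(u,Q_\rho)\le\F(u,Q_{(1+\eta)\rho})+\text{error}$ (and similarly for $\bar u_{x_0}$). The smallness $\rho^{-N}\|u-\bar u_{x_0}\|_{L^1(Q_\rho)}\to0$ does not help here because one cannot make the gluing layer shrink to zero width as $n\to\infty$ without the cut-off gradient (and hence the interpolation cost) blowing up. This is exactly the delicate point that \cite{AB} resolves by a more refined argument (approximation by polyhedral partitions for the upper bound, combined with lower semicontinuity and a Reshetnyak-type continuity of the candidate density, rather than a direct recovery-sequence transfer). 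In short: your blueprint reproduces the frame of the \cite{AB} theorem but the one nontrivial step --- insensitivity of the blow-up density to the particular piecewise constant map with a given jump triple --- is not established by the gluing you describe, whereas the paper obtains it for free by invoking \cite[Theorem~3.1]{AB}.
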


\begin{proof}
It follows the argument of \cite[Proposition 4.2]{BDV} that is based
on the general result \cite[Theorem 3.1]{AB}, on account to Lemmas
\ref{measbis}, \ref{invtrans} and Remark \ref{measconstr}. We omit
any further details.
\end{proof}

\section{The upper bound}

\noindent We now adress the $\G$-$\limsup$ inequality. The upper
bound on the diffuse part will be obtained using an extension of the
relaxation result of \cite{AEL} (see Theorem \ref{relax} in the
Appendix) together with the partial representation of the $\G$-limit
already established in $W^{1,1}$ (see Theorem~\ref{babmilp=1}). The
estimate of the jump part relies on the integral representation on
partitions in sets of finite perimeter stated in Proposition
\ref{reppart}. \vskip5pt

In view of the measure property of the $\G$-limit, we may write for
every $u\in BV(\O;\M)$,
\begin{equation}\label{decompupbd}
\F(u,\O)=\F(u,\O\setminus S_u)+\F(u,\O\cap S_u)\,.
\end{equation}
Hence the desired upper bound $\F(u,\O)\leq \F_{\rm hom}(u)$ will follow estimating separately the two terms in the right handside of \eqref{decompupbd}.

\begin{lemma}\label{upperboundBV}
For every $u \in BV(\O;\M)$, we have
$$\F(u,\O\setminus S_u) \leq \int_\O Tf_{\rm hom}(u,\nabla u)\,
dx+ \int_\O Tf^\infty_{\rm hom}\left(\tilde
u,\frac{dD^cu}{d|D^cu|}\right)\, d|D^cu|\,.$$
\end{lemma}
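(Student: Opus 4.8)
The plan is to prove the inequality on the bulk part $\F(u,\O\setminus S_u)\le \int_\O Tf_\hom(u,\nabla u)\,dx$ and on the Cantor part $\F(u,\O\setminus S_u)\le \int_\O Tf^\infty_\hom(\tilde u,\frac{dD^cu}{d|D^cu|})\,d|D^cu|$ separately, and then combine them via the Radon--Nikod\'ym decomposition of the measure $\F(u,\cdot)$. Since $\F(u,\cdot)$ is (by Lemma \ref{measbis}) a Radon measure absolutely continuous with respect to $\LL^N+|Du|$, and $\F(u,\O\setminus S_u)=\F(u,\O\setminus J_u)$ has no part supported on $\HH^{N-1}$-rectifiable sets, it suffices to bound the $\LL^N$-density of $\F(u,\cdot)$ at $\LL^N$-a.e.\ point by $Tf_\hom(u(x),\nabla u(x))$ and the $|D^cu|$-density at $|D^cu|$-a.e.\ point by $Tf^\infty_\hom(\tilde u(x),A(x))$; summing the two absolutely continuous pieces then yields the claim. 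Equivalently — and this is the route I would actually follow — I would produce, for each fixed open set $A$, a recovery sequence built by a blow-up/localization argument.

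First, for the \emph{bulk density}: at an $\LL^N$-Lebesgue point $x_0$ of $\nabla u$ where $u$ is approximately differentiable and $u(x_0)\in\M$ with $\nabla u(x_0)\in[T_{u(x_0)}(\M)]^N$ (Lemma \ref{manifold}), I would approximate $u$ near $x_0$ by the affine map $\ell(x)=u(x_0)+\nabla u(x_0)(x-x_0)$, which takes values in the affine tangent plane but not necessarily in $\M$. Using Theorem \ref{babmilp=1} I get a recovery sequence for the Sobolev energy $\F_\e$ converging to $Tf_\hom(u(x_0),\nabla u(x_0))$ for a map with values near $\M$, and then I project back onto $\M$ with Proposition \ref{proj}, controlling the extra gradient cost by the small deviation from the tangent plane (this is where $(H_3)$/the Lipschitz bound \eqref{plipT} and continuity \eqref{hyp4} of $Tf_\hom$ enter). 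A standard covering of $\O\setminus S_u$ by small cubes plus the measure property and lower semicontinuity of $\F(\cdot,A)$ then gives $\F(u,A)\le \int_A Tf_\hom(u,\nabla u)\,dx$ for every $A$ not charging the Cantor part. To handle the general case one combines this with the relaxation/integral-representation in $W^{1,1}$ via a density argument (Theorem \ref{density}) and uses $(H_4)$ and \eqref{hyp5} to pass to $Tf^\infty_\hom$.

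Second, for the \emph{Cantor density}: at a $|D^cu|$-point $x_0$, by Alberti's rank-one theorem $A(x_0)=a\otimes\nu$ is rank one, tangent to $\M$ at $\tilde u(x_0)$. The blow-up of $u$ looks one-dimensional in the direction $\nu$, so I would use a one-dimensional geodesic-type construction on $\M$ — as in the definition of $\vartheta_\hom$ / $\mathcal G(a,b)$ — reparametrized so that the total variation matches $|D^cu|$ locally, and feed the recession function $f^\infty$; homogenizing this slicewise construction (essentially the argument already used in the proofs of Propositions \ref{limsurf2} and \ref{limitsurfenerg}) yields the density $T(f^\infty)_\hom(\tilde u(x_0),A(x_0))=Tf^\infty_\hom(\tilde u(x_0),A(x_0))$ by Remark \ref{reminfty}. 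The matching of scales between $\e_n$ and the measure-theoretic blow-up rate of $|D^cu|$ is the technical heart here, and is handled by a standard diagonalization.

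\textbf{Main obstacle.} The delicate point is the \emph{Cantor part}: unlike the bulk term, one cannot simply quote Theorem \ref{babmilp=1}, because near a Cantor point $u$ is far from any Sobolev map of comparable energy, and the $\M$-constraint forbids linear interpolation across the jump in $D^cu$. One must simultaneously (a) realize the rank-one tangent matrix $A(x_0)$ by a genuinely $\M$-valued profile (a short geodesic segment smeared out at the right scale), (b) homogenize that profile against $f^\infty$ to get $Tf^\infty_\hom$ rather than $T f_\hom$, which forces the use of $(H_4)$ and the estimate \eqref{hyp5} to control the error $|f-f^\infty|$ along sequences of unbounded gradient, and (c) match this against the blow-up rate of the singular measure $|D^cu|$ uniformly. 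The bulk part, by contrast, is essentially a projection-and-quote argument once Proposition \ref{proj} and Theorem \ref{babmilp=1} are in hand.
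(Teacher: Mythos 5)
Your proposal takes a genuinely different route from the paper, and the Cantor part contains a conceptual mistake worth flagging.

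\textbf{What the paper actually does.} The paper's proof is a short two-step ``iterated $\Gamma$-convergence'' argument and contains no blow-up at all. For an arbitrary $\{u_n\}\subset W^{1,1}(A;\M)$ with $u_n\to u$ in $L^1$, lower semicontinuity of $\F(\cdot,A)$ plus Theorem~\ref{babmilp=1} (which gives $\F(u_n,A)=\int_A Tf_\hom(u_n,\nabla u_n)\,dx$ for Sobolev $u_n$) yields
$\F(u,A)\le \liminf_n\int_A Tf_\hom(u_n,\nabla u_n)\,dx$. Taking the infimum over all such sequences, the right-hand side becomes exactly the $BV$-relaxation of the (no longer oscillating) integral functional with density $Tf_\hom$, constrained to $\M$; the integral representation of this relaxation is precisely Theorem~\ref{relax} in the Appendix. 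Plugging in $A=\O\setminus S_u$ kills the jump term, and the lemma follows. All the blow-up work is thus delegated to the proof of Theorem~\ref{relax}, not done inside Lemma~\ref{upperboundBV}. Compared to your direct construction, this buys modularity and brevity at the cost of needing the $\M$-valued $BV$-relaxation theorem as a black box; your approach would essentially re-prove that theorem inline.

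\textbf{The gap in the Cantor part.} Your proposed construction at a $|D^cu|$-point uses geodesic profiles from $\mathcal G(a,b)$ and invokes Propositions~\ref{limsurf2} and \ref{limitsurfenerg}. Those propositions and the geodesic class are specific to \emph{jump} points and produce the \emph{surface} density $\vartheta_\hom(a,b,\nu)$, not the recession density $Tf^\infty_\hom(\tilde u(x_0),A(x_0))$; these are different objects in general. At a Cantor point the blow-up limit is of the form $x\mapsto a\,\theta(x\cdot\nu)$ with $\theta$ a monotone (possibly continuous) $BV$ function, not a jump between two fixed values $a$ and $b$, so there is no natural pair of endpoints to connect by a geodesic, and the blow-up rate $\alpha_k=|Du|(Q(x_0,\rho_k))/\rho_k^N\to+\infty$ is what forces the recession function, not a geodesic length. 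The correct construction (as carried out in the Appendix, following \cite{FM2} and \cite{AEL}) is to mollify $u$, project onto $\M$ via Proposition~\ref{proj}, and use $(H_4)$ together with the scale divergence to replace $f$ by $f^\infty$. So while your instinct about where the difficulty lies (scale matching, appearance of $f^\infty$) is right, the proposed geodesic mechanism would produce the wrong density and needs to be replaced by the mollify-and-project device. The bulk part of your sketch is on the right track and could be made to work, but, again, the paper avoids even that by the relaxation short-cut.
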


\begin{proof}
Let $A \in \A(\O)$ and $\{u_n\} \subset W^{1,1}(A;\M)$ be such that
$u_n \to u$ in $L^1(A;\Rb^d)$. Since $\F(\cdot,A)$ is sequentially
lower semicontinuous for the strong $L^1(A;\Rb^d)$ convergence, it
follows from Theorem~\ref{babmilp=1} that
$$\F(u,A) \leq \liminf_{n \to +\infty}\, \F(u_n,A)=\liminf_{n \to +\infty}\, \int_A Tf_\hom(u_n,\nabla u_n)\, dx\,.$$
Since the  sequence $\{u_n\}$ is arbitrary, we deduce
$$\F(u,A) \leq \inf\left\{\liminf_{n \to +\infty} \int_A Tf_\hom(u_n,\nabla u_n)\, dx : \{u_n\} \subset W^{1,1}(A;\M),
\, u_n \to u\text{ in }L^1(A;\Rb^d)\right\}.$$ According to
Proposition \ref{properties1}, the energy density $Tf_\hom$ is a
continuous and tangentially quasiconvex function which fulfills the
assumptions of Theorem \ref{relax}. Hence
\begin{equation}\label{FUA}
\F(u,A) \leq \int_A Tf_\hom(u,\nabla u)\,
dx+ \int_A Tf^\infty_\hom\left(\tilde u,\frac{dD^cu}{d|D^cu|}\right)
d|D^cu| + \int_{S_u \cap A} H(u^+,u^-,\nu_u)\, d\HH^{N-1}
\end{equation}
for some function $H:\M \times \M \times \SN \to [0,+\infty)$. By
outer regularity, (\ref{FUA}) holds for every $A \in \mathcal
B(\O)$. Taking $A=\O \setminus S_u$, we obtain
$$\F(u,\O \setminus S_u) \leq \int_\O Tf_\hom(u,\nabla u)\,
dx+ \int_\O Tf^\infty_\hom\left(\tilde
u,\frac{dD^cu}{d|D^cu|}\right)\, d|D^cu|\,,
$$ and the proof is complete.
\end{proof}

To prove the upper bound of the jump part, we first need to compare the energy density $K$ obtained in Proposition \ref{reppart}
with the expected density $\vartheta_\hom$.

\begin{lemma}\label{upbdsurf}
We have $K(a,b,\nu_1)\leq \vartheta_{\rm hom}(a,b,\nu_1)$ for every
$(a,b,\nu_1)\in \M\times\M\times\mathbb{S}^{N-1}$.
\end{lemma}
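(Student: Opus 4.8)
The plan is to bound the $\G$-limit $\F(u, S_u)$ from above by the energy produced by a carefully chosen recovery sequence, exploiting the fact that near a jump point the optimal competitor in the definition \eqref{thetahom} of $\vartheta_{\rm hom}$ is essentially a one-dimensional transition. First I would reduce to the case of a pure jump function, i.e. $u = u_{a,b,\nu_1}$ on a cube $Q_{\nu_1}$, since by Proposition~\ref{reppart} the density $K(a,b,\nu_1)$ is characterized through $\F(u,S)$ for $u \in BV(\O;T)$ with $T$ a finite set, and one may localize around a point of the jump set where $u$ takes only the two values $a$ and $b$. By Lemma~\ref{invtrans} (translation invariance) and the measure property from Lemma~\ref{measbis}, it suffices to estimate $\F(u_{a,b,\nu_1}, Q_{\nu_1})/\HH^{N-1}(Q_{\nu_1}\cap\{x_{\nu_1}=0\})$.

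The core step is the construction of the recovery sequence. Fix $\eta>0$; by the definition \eqref{thetahom} of $\vartheta_{\rm hom}$, choose $t$ large and a map $\varphi \in W^{1,1}(tQ_{\nu_1};\M)$ with $\varphi = a$ on $\partial(tQ_{\nu_1})\cap\{x\cdot\nu_1>0\}$, $\varphi = b$ on the complementary part of the boundary, such that
$$
\frac{1}{t^{N-1}} \int_{tQ_{\nu_1}} f^\infty(y,\nabla\varphi(y))\, dy \leq \vartheta_{\rm hom}(a,b,\nu_1) + \eta\,.
$$
Then I would define $u_n(x) := \varphi(x/\e_n)$ on an $O(\e_n t)$-neighbourhood of the jump hyperplane inside $Q_{\nu_1}$ and set $u_n = u_{a,b,\nu_1}$ elsewhere; the boundary conditions on $\varphi$ guarantee that $u_n \in W^{1,1}$ and that $u_n \to u_{a,b,\nu_1}$ in $L^1$. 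Tiling the hyperplane $\{x_{\nu_1}=0\}\cap Q_{\nu_1}$ by $\approx (\e_n t)^{-(N-1)}$ translated copies of $\e_n t\, Q_{\nu_1}$, one computes
$$
\F_{\e_n}(u_n, Q_{\nu_1}) \approx \int_{\text{slab}} f\!\left(\frac{x}{\e_n}, \nabla u_n\right) dx \leq (\e_n t)^{N-1} (\e_n t)^{-(N-1)} \left( \vartheta_{\rm hom}(a,b,\nu_1) + \eta\right) \HH^{N-1}(\text{face}) + o(1)\,,
$$
where I use $(H_4)$ (via \eqref{grec}-type estimates, or directly hypothesis $(H_4)$ for $f$) to replace $f$ by $f^\infty$ at the cost of a lower-order term, since $|\nabla u_n| \sim 1/\e_n \to \infty$ on the slab while the slab has volume $O(\e_n)$. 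Passing to the $\liminf$ gives $\F(u_{a,b,\nu_1}, Q_{\nu_1}) \leq (\vartheta_{\rm hom}(a,b,\nu_1)+\eta)\,\HH^{N-1}(\text{face})$, and letting $\eta \to 0$ together with Proposition~\ref{reppart} yields $K(a,b,\nu_1) \leq \vartheta_{\rm hom}(a,b,\nu_1)$.

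The main obstacle I anticipate is the matching of boundary values when tiling: the periodic competitor $\varphi$ lives on $tQ_{\nu_1}$ with prescribed traces $a$ above and $b$ below, so adjacent scaled cells $\e_n t\,(z + Q_{\nu_1})$ agree on their vertical interfaces (both equal to the constant $a$ or $b$ there, since $\varphi$ equals a constant on the lateral boundary of $tQ_{\nu_1}$), hence the glued map is genuinely in $W^{1,1}$ with no extra interfacial energy — but one must check that the lateral faces of $Q_{\nu_1}$ are indeed where $\varphi$ is constant, which is built into the admissibility class in \eqref{thetahom} only on the two faces orthogonal to $\nu_1$; on the remaining lateral faces $\varphi$ need not be constant, so a small correction layer of width $O(\e_n)$ near the lateral boundary of $Q_{\nu_1}$ is needed to interpolate, contributing only $O(\e_n)\to 0$ by the linear growth bound $(H_2)$. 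A secondary technical point is justifying the reduction from a general $u\in BV(\O;T)$ to the model jump function via blow-up at $\HH^{N-1}$-a.e. point of $S_u$, which is standard given the measure representation of $\F(u,\cdot)$ and Proposition~\ref{reppart}; I would invoke it without belaboring the details.
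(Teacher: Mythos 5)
Your high-level plan -- reduce to a pure jump via Proposition~\ref{reppart}, choose a near-optimal competitor for $\vartheta_{\rm hom}$, tile along the jump plane, and use $(H_4)$ to pass between $f$ and $f^\infty$ -- is the right strategy and broadly matches the paper. But the construction you sketch has two genuine gaps, and they are precisely the points the paper works hardest to address.

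\textbf{The fill between tiled cells destroys Sobolev regularity.} You take $\varphi\in W^{1,1}(tQ_\nu;\M)$ from the class in \eqref{thetahom}, rescale, tile a slab around $\{x\cdot\nu_1=0\}$, and set $u_n:=u_{a,b,\nu_1}$ ``elsewhere''. The boundary datum of $\varphi$ in \eqref{thetahom} is the \emph{jump} $u_{a,b,\nu}$, which is consistent with the traces at cell interfaces (incidentally, your worry about the lateral faces is misplaced: the admissibility class does prescribe $\varphi$ on \emph{all} of $\partial(tQ_\nu)$, piecewise constant $a$/$b$). But near $\partial Q_{\nu_1}$ the cells do not tile perfectly, and in the uncovered portion of the slab $u_n = u_{a,b,\nu_1}$ carries a genuine jump across $\{x\cdot\nu_1=0\}$. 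Hence $u_n\notin W^{1,1}(Q_{\nu_1};\M)$ and $\F_{\e_n}(u_n)=+\infty$: your competitor is inadmissible. There is no obvious $O(\e_n)$ interpolation layer that fixes this, because you would need to smooth a discontinuity while staying $\M$-valued. This is exactly why the paper does \emph{not} work directly with the class $\mathcal A_t$ of \eqref{thetahom} but switches, via Proposition~\ref{limsurf2} and the identity \eqref{idsurfen}, to the class $\mathcal B_\e(a,b,\nu)$ whose boundary datum is a Lipschitz geodesic transition $\g(x_\nu/\e)$: then the fill function $\g_{\e_0}(x\cdot\nu_1/\e_n)$ is Sobolev and matches the traces exactly, no interpolation required.

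\textbf{Periodicity of $f$ is not respected by an arbitrary tiling.} After rescaling by $\e_n$, the energy of a translated copy of $\varphi$ over a cell centered at $z$ is $\int f^\infty(y+\e_n^{-1}z,\nabla\varphi(y))\,dy$, which depends on the translate unless $\e_n^{-1}z\in\Zb^N$. A generic $\nu_1$ gives a jump hyperplane that does not contain a full rank sublattice of $\Zb^N$, so the per-cell energies are \emph{not} all equal to the one you selected, and your ``$\approx$'' has no justification (indeed $f$ is only Carath\'eodory in $y$, so small phase shifts are not controllable by $(H_3)$). The paper handles this by first reducing to rational bases $\nu$ (legitimate by the continuity of $K$ and $\vartheta_{\rm hom}$ in the last variable) and then constructing a sublattice $\Lambda_n$ of translates $x_n^{(\lambda)}=\e_n\sum_i\lambda_i v_i$ with $v_i\in\Zb^N$, which forces the correct phase alignment; this is the content of the set $\Lambda_n$ in the proof and the appeal to Step~1 of \cite[Proposition~2.2]{BDV}. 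Without some version of this reduction, the tiling estimate does not close.

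A more minor point: comparing $\F(u_{a,b,\nu_1},Q_{\nu_1})$ with $\HH^{N-1}$ of the face is contaminated by the bulk contribution (which need not vanish for the jump function since $Tf_{\rm hom}(s,0)$ can be positive). The paper avoids this by evaluating $\F$ on the Borel set $A_\eta\cap\Pi_{\nu_1}\subset S_u$ via Proposition~\ref{reppart}, and controls the residual slab energy by $\beta\mathcal L^N(A_\eta)\to 0$ as $\eta\to 0$.
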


\begin{proof}
We will partially proceed  as in the proof of Proposition
\ref{limsurf2} and we refer to it for the notation. Consider
$\nu=(\nu_1,\ldots,\nu_N)$ an orthonormal basis of $\Rb^N$. We shall
prove that $K(a,b,\nu_1)\leq \vartheta_{\rm hom}(a,b,\nu_1)$. Since
$K$ and $\vartheta_{\rm hom}$ are continuous in the last variable,
we may assume that $\nu$ is a rational basis, {\it i.e.}, for all $i
\in \{1,\ldots,N\}$, there exists $\g_i \in \Rb\setminus \{0\}$ such
that $v_i:= \g_i \nu_i \in \Zb^N$, and the general case follows by
density. \vskip5pt

Given $0<\eta<1$ arbitrary, by Proposition \ref{limitsurfenerg} and
\eqref{idsurfen} we can find $\e_0>0$,
$u_0\in\mathcal{B}_{\e_0}(a,b,\nu)$ and
$\gamma_{\e_0}\in\mathcal{G}(a,b)$ such that
$u_0(x)=\gamma_{\e_0}(x\cdot\nu_1/\e_0)$ and
$$\int_{Q_{\nu}} f^{\infty}\bigg(\frac{x}{\e_0},\nabla u_0\bigg)\, dx\leq \vartheta_{\rm hom}(a,b,\nu_1)+\eta\,.$$
For every $\lambda=(\lambda_2,\ldots,\lambda_N)\in\Zb^{N-1}$, we set
$x_n^{(\lambda)}:=\e_n\sum_{i=2}^N\lambda_iv_i$ and
$Q_{\nu,n}^{(\lambda)}:=x^{(\lambda)}_n+(\e_n/\e_0)Q_\nu$. We define
the set $\Lambda_n$ by
\begin{multline*}
\Lambda_n
:=\Bigg\{\lambda\in\Zb^{N-1}\;:\;Q_{\nu,n}^{(\lambda)}\subset
Q_{\nu} \text{ and } x_n^{(\lambda)}\in \sum_{i=2}^N
l_i\left(\frac{\e_n}{\e_0}+\e_n \gamma_i\right)\nu_i+\e_n  P\\
\text{ for some }(l_2,\ldots,l_N)\in\Zb^{N-1}\Bigg\}\,,
\end{multline*}
where $$P:=\Big\{\a_2v_2 + \ldots + \a_N v_N : \, \a_2,\ldots,\a_N
\in [-1/2,1/2)\Big\}.$$ Next consider
$$u_n(x)=\begin{cases}
\ds u_0\bigg(\frac{\e_0(x-x_n^{(\lambda)})\cdot\nu_1}{\e_n}\bigg) & \text{if $x\in Q_{\nu,n}^{(\lambda)}$ for some $\lambda\in\Lambda_n$}\,,\\[10pt]
\ds \gamma_{\e_0}\bigg(\frac{x\cdot\nu_1}{\e_n}\bigg) & \text{otherwise}\,.
\end{cases}$$
Note that $u_n\in W^{1,1}(Q_\nu;\M)$, $\{\nabla u_n\}$ is bounded in
$L^1(Q_\nu;\Rb^{d \times N})$, and $u_n\to u^{a,b}_{\nu_1}$ in
$L^1(Q_\nu;\Rb^d)$ as $n\to+\infty$ with $u^{a,b}_{\nu_1}$ given
by \begin{equation*}
u_{\nu_1}^{a,b}(x):=\begin{cases}
a & \text{if } x\cdot\nu_1 \geq 0\,, \\
b & \text{if } x\cdot\nu_1 <0\,,
\end{cases}
\quad \Pi_{\nu_1}:=\big\{x\in\Rb^N : x\cdot\nu_1=0\big\}\,.
\end{equation*}
Arguing as in Step 1 of the proof of \cite[Proposition 2.2]{BDV}, we
obtain that 
\begin{equation}\label{pasidee1}
\limsup_{n\to+\infty}\,
\int_{Q_\nu}f^\infty\bigg(\frac{x}{\e_n},\nabla u_n\bigg)\, dx\leq
\int_{Q_\nu} f^\infty\bigg(\frac{x}{\e_0},\nabla u_0\bigg)\, dx \leq
\vartheta_{\rm hom}(a,b,\nu_1)+\eta\,.
\end{equation}
For $\rho>0$  define $A_\rho:=Q_\nu\cap\{|x\cdot\nu_1|<\rho\}$. By
construction the sequence $\{u_n\}$ is admissible for
$\F(u^{a,b}_{\nu_1},A_\eta)$ so that
\begin{multline}\label{pasidee1b}
\F\big(u^{a,b}_{\nu_1},A_\eta\cap\Pi_{\nu_1}\big)\leq\F(u^{a,b}_{\nu_1},A_\eta)\leq
\liminf_{n\to+\infty} \,\int_{A_\eta}f\bigg(\frac{x}{\e_n},\nabla u_n\bigg)\, dx\leq \\
\leq\beta
\mathcal{L}^N(A_\eta)+\liminf_{n\to+\infty}\,\int_{A_{\e_n}}f\bigg(\frac{x}{\e_n},\nabla
u_n\bigg)\, dx\leq
\liminf_{n\to+\infty}\,\int_{A_{\e_n}}f\bigg(\frac{x}{\e_n},\nabla
u_n\bigg)\, dx +\beta\eta\,,
\end{multline}
where we have used $(H_2)$ and the fact that $\nabla u_n=0$ outside
$A_{\e_n}$. On the other hand, Proposition~\ref{reppart} yields
\begin{equation}\label{pasidee2}
\F\big(u^{a,b}_{\nu_1},A_\eta\cap\Pi_{\nu_1}\big)=\int_{A_\eta\cap\Pi_{\nu_1}}K(a,b,\nu_1)\,
d\HH^{N-1}= K(a,b,\nu_1)\,.
\end{equation}
Using $(H_4)$, the boundedness of $\{\nabla u_n\}$ in
$L^1(Q_\nu;\Rb^{d \times N})$, the fact that
$f^\infty(\cdot,0)\equiv 0$, and  H\"older's inequality, we derive
\begin{eqnarray}\label{pasidee3}
\bigg|\int_{A_{\e_n}}f\bigg(\frac{x}{\e_n},\nabla u_n\bigg) \,dx-
\int_{Q_\nu}f^\infty\bigg(\frac{x}{\e_n},\nabla u_n\bigg)\,dx\bigg|& \leq & C\int_{A_{\e_n}}(1+|\nabla u_n|^{1-q})\,dx\nonumber\\
& \leq & C\big(\e_n+\e_n^q\|\nabla u_n\|^{1-q}_{L^1(Q_\nu;\Rb^{d
\times N})}\big)\to 0\,
\end{eqnarray}
as $n \to \infty$. Gathering \eqref{pasidee1}, \eqref{pasidee1b}, \eqref{pasidee2} and
\eqref{pasidee3}, we obtain $K(a,b,\nu_1)\leq \vartheta_{\rm
hom}(a,b,\nu_1)+(\beta+1)\eta$ and the conclusion follows from the
arbitrariness of $\eta$.
\end{proof}

We are now in position to prove the upper bound on the jump part of the energy. The argument is based on
Lemma \ref{upbdsurf}  together with an approximation procedure of \cite{AMT}. In view of Lemma \ref{upperboundBV}
and \eqref{decompupbd}, this will complete the proof of the upper bound $\F(u,\O)\leq \F_{\rm hom}(u)$.

\begin{corollary}\label{upbdjp}
For every $u\in BV(\O;\M)$, we have
$$\F(u,\O\cap S_u)\leq \int_{\O\cap S_u} \vartheta_{\rm hom}(u^+,u^-,\nu_u)\, d\HH^{N-1}\,.$$
\end{corollary}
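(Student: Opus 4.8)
The plan is to combine the integral representation on partitions from Proposition~\ref{reppart} with a density/approximation argument reducing the general case to piecewise constant maps, and then to invoke the locality (measure) property of $\F(u,\cdot)$ from Lemma~\ref{measbis}. First I would fix $u \in BV(\O;\M)$ and the finite set $T := u^+(\O \cap S_u) \cup u^-(\O \cap S_u)$ — or rather, since $u$ need not take finitely many values, I would approximate $u$ on a neighborhood of its jump set by maps that do. Concretely, following the approximation procedure of \cite{AMT}, one writes $u = \lim_k u_k$ where each $u_k \in BV(\O;\M)$ is piecewise constant (with values in a finite subset $T_k$ of $\M$), the approximation being strict in $BV$ in the sense that $u_k \to u$ in $L^1$, $|Du_k|(\O) \to |Du|(\O)$, and — the crucial point for the jump part — $\HH^{N-1}\res S_{u_k} \rightharpoonup \HH^{N-1}\res S_u$ with the trace pair $(u_k^+,u_k^-,\nu_{u_k})$ converging appropriately to $(u^+,u^-,\nu_u)$ $\HH^{N-1}$-a.e. on $S_u$.

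Next I would apply Proposition~\ref{reppart} to each $u_k$: since $u_k \in BV(\O;T_k)$ with $T_k$ finite, we get
$$\F(u_k,\O \cap S_{u_k}) = \int_{\O \cap S_{u_k}} K(u_k^+,u_k^-,\nu_{u_k})\, d\HH^{N-1}\,.$$
Because $K$ is continuous in the last variable and controlled by $\vartheta_\hom$ via Lemma~\ref{upbdsurf}, together with the bound \eqref{propsurf2} ($K(a,b,\nu) \leq \vartheta_\hom(a,b,\nu) \leq C_2|a-b|$) and the continuity estimate \eqref{propsurf} for $\vartheta_\hom$, one can pass to the limit in the right-hand side: the integrals $\int_{\O\cap S_{u_k}} \vartheta_\hom(u_k^+,u_k^-,\nu_{u_k})\,d\HH^{N-1}$ converge to $\int_{\O \cap S_u} \vartheta_\hom(u^+,u^-,\nu_u)\, d\HH^{N-1}$. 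The point is that the approximation of \cite{AMT} is tailored precisely so that surface energies with densities obeying a subadditivity/Lipschitz estimate of the form \eqref{propsurf}--\eqref{propsurf2} are continuous along it.

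For the left-hand side I would use lower semicontinuity: since $u_k \to u$ in $L^1$ and $\F(\cdot,\O)$ is $L^1$-lower semicontinuous (being a $\G$-limit), and more precisely using the measure property of Lemma~\ref{measbis} to localize — write $\F(u,\O\cap S_u) \le \F(u,A)$ for any open $A \supset \O \cap S_u$, and for well-chosen $A$ (a thin neighborhood of $S_u$ on which the bulk and Cantor contributions of $u_k$ are small, thanks to the strict convergence $|Du_k|(\O) \to |Du|(\O)$) one gets $\F(u,A) \le \liminf_k \F(u_k,A) \le \liminf_k \big[\F(u_k,\O\cap S_{u_k}) + o(1)\big]$, the $o(1)$ absorbing the bulk/Cantor part of $u_k$ on $A$ via the growth bound $\F(u_k,A) \le \beta(\LL^N(A) + C_\star|Du_k|(A))$ established inside the proof of Lemma~\ref{measbis}. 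Combining the two limits yields $\F(u,\O \cap S_u) \le \int_{\O\cap S_u}\vartheta_\hom(u^+,u^-,\nu_u)\, d\HH^{N-1}$ as desired.

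The main obstacle I anticipate is the careful choice of the neighborhood $A$ of $S_u$ and the control of the error terms: one must ensure simultaneously that the bulk and Cantor energies of the approximants $u_k$ restricted to $A$ vanish in the limit (which requires coordinating $A$ with the strict convergence $|Du_k| \to |Du|$ and the fact that $|Du|$ restricted to $S_u$ is a jump measure), that the jump energy of $u_k$ concentrated near $S_u$ is captured, and that no jump energy of $u_k$ is "lost" outside $A$. This is exactly where the fine structure of the \cite{AMT} approximation — in particular the interaction with Proposition~\ref{reppart} (which needs \emph{finitely}-valued maps) and the continuity estimates of Proposition~\ref{contsurfenerg} — must be exploited, and it is the only genuinely delicate point; the rest is bookkeeping with the measure property and the growth bounds.
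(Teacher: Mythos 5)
Your high-level plan — approximate $u$ by piecewise-constant maps, apply Proposition~\ref{reppart} together with Lemma~\ref{upbdsurf} to each approximant, then pass to the limit using $L^1$-lower semicontinuity of $\F(\cdot,A)$ and the measure property — is exactly the structure of the paper's proof. However, there are two concrete gaps that make the proposal incomplete as written.

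First, you assert that the approximation of \cite{AMT} directly produces piecewise-constant maps $u_k \in BV(\O;\M)$ valued in finite subsets $T_k \subset \M$. It does not: the construction in \cite[Proposition~4.8, Step~2]{AMT} yields $v_n \in BV(\O;T_n)$ with $T_n \subset \Rb^d$ a finite set, \emph{not} a subset of $\M$, so Proposition~\ref{reppart} is not directly applicable. The crucial feature of that construction — which you do not mention — is that $v_n \to u$ in $L^\infty(\O;\Rb^d)$. This is precisely what the paper exploits: it first extends $\vartheta_\hom$ to a function $\hat\vartheta_\hom$ defined on all of $\Rb^d\times\Rb^d\times\SN$ (via the nearest-point projection $\Pi$ and a cutoff) so that the \cite{AMT} machinery can be applied with this extended density, and then, once $\|v_n-u\|_{L^\infty}<\delta_0/2$, sets $u_n := \Pi(v_n) \in BV(\O;\M)$. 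One then has to check that $u_n^\pm = \Pi(v_n^\pm)$ and $\nu_{u_n}=\nu_{v_n}$ $\HH^{N-1}$-a.e.\ on $S_{u_n}\subset S_{v_n}$, so that the $\hat\vartheta_\hom$-energy of $v_n$ controls the $\vartheta_\hom$-energy of $u_n$. Without this projection step the argument does not reach an object to which Proposition~\ref{reppart} applies.

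Second, your claim that the \cite{AMT} approximation makes the surface integrals \emph{continuous} is not accurate, and your proposed fix — choosing a thin open neighborhood $A\supset\O\cap S_u$ and coordinating it with $k$ — is both vaguer and more delicate than what is needed. The \cite{AMT} bound has an explicit error term: for every open $A$,
\[
\limsup_{n}\int_{A\cap S_{v_n}}\hat\vartheta_{\rm hom}(v_n^+,v_n^-,\nu_{v_n})\,d\HH^{N-1}\ \leq\ C\,|Du|(A\setminus S_u)\ +\ \int_{A\cap S_u}\vartheta_{\rm hom}(u^+,u^-,\nu_u)\,d\HH^{N-1}\,,
\]
because the diffuse variation of $u$ on $A$ is converted into extra jump energy for the piecewise-constant approximants. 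Combining this with Proposition~\ref{reppart}, Lemma~\ref{upbdsurf}, the growth bound $\F(u_n,A\setminus S_{u_n})\leq C\LL^N(A)$ (note $|Du_n|(A\setminus S_{u_n})=0$ for piecewise-constant $u_n$, so this is $\sim\LL^N(A)$, not an $o(1)$ absorbable by choosing $A$ "thin"), and lower semicontinuity, one obtains for \emph{every} open $A$
\[
\F(u,A)\ \leq\ \int_{A\cap S_u}\vartheta_{\rm hom}(u^+,u^-,\nu_u)\,d\HH^{N-1}\ +\ C\big(\LL^N(A)+|Du|(A\setminus S_u)\big)\,.
\]
The clean way to kill the error terms is not to shrink $A$ along the approximation, but to observe (Remark~\ref{measconstr}) that both sides extend to Radon measures on Borel sets, and then evaluate at the Borel set $A=\O\cap S_u$, for which $\LL^N(\O\cap S_u)=0$ and $(\O\cap S_u)\setminus S_u=\emptyset$ hold identically. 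This is the mechanism you should use; the "thin neighborhood" route requires a coupling between $A$ and $k$ that you do not carry out and that is unnecessary once the measure extension is in hand.
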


\begin{proof} First assume that $u$ takes a finite number of values, {\it i.e.}, $u\in BV(\O;T)$ for some finite subset $T\subset \M$.
Then the conclusion directly follows from Proposition \ref{reppart} together with Lemma~\ref{upbdsurf}.

Fix an arbitrary function $u\in BV(\O;\M)$ and an open set
$A\in\A(\O)$. For $\delta_0>0$ small enough, let  $\mathcal
U:=\big\{s \in\Rb^d\,:\,\dist(s,\M)<\d_0\big\}$ be the
$\d_0$-neighborhood of $\M$ on which the nearest point projection
$\Pi: \mathcal U \to \M$ is a well defined Lipschitz mapping. We
extend $\vartheta_{\rm hom}$ to a function $\hat \vartheta_{\rm
hom}$ defined in $\Rb^d \times \Rb^d
\times\mathbb{S}^{N-1}$ by setting
$$\hat \vartheta_{\rm hom}(a,b,\nu):=\chi(a)\chi(b)\vartheta_{\rm hom}\bigg(\Pi(a),\Pi(b), \nu\bigg)\,, $$
for a cut-off function $\chi
\in \C^\infty_c(\Rb^d;[0,1])$ satisfying $\chi(t)=1$ if $\dist(s,\M)
\leq \delta_0/2$, and $\chi(s)=0$ if $\dist(s,\M) \geq 3\delta_0/4$.
In view of Proposition \ref{contsurfenerg}, we infer that
$\hat\vartheta_{\rm hom}$ is continuous and satisfies
$$|\hat\vartheta_{\rm hom}(a_1,b_1,\nu)-\hat\vartheta_{\rm hom}(a_2,b_2,\nu)|\leq C\big(|a_1-a_2|+|b_1-b_2|\big)\,,$$
and
$$\hat\vartheta_{\rm hom}(a_1,b_1,\nu)\leq  C|a_1-b_1|\,,$$
for every $a_1$, $b_1$, $a_2$, $b_2\in\Rb^d$, $\nu\in\SN$, and
some constant $C>0$. Therefore we can apply Step~2 in the proof of
\cite[Proposition 4.8]{AMT} to obtain a sequence $\{v_n\}\subset
BV(\O;\Rb^d)$ such that, for every $n\in \Nb$, $v_n\in BV(\O;T_n)$
for some finite set $T_n\subset \Rb^d$, $v_n\to u$ in
$L^\infty(\O;\Rb^d)$ and
\begin{eqnarray*}
\limsup_{n\to+\infty}\,\int_{A\cap S_{v_n}}\hat\vartheta_{\rm
hom}(v_n^+,v_n^-,\nu_{v_n})\, d\HH^{N-1} & \leq & C|Du|(A\setminus
S_u)+
\int_{A\cap S_{u}}\hat\vartheta_{\rm hom}(u^+,u^-,\nu_{u})\, d\HH^{N-1}\\
& = & C|Du|(A\setminus S_u)+ \int_{A\cap S_{u}}\vartheta_{\rm
hom}(u^+,u^-,\nu_{u})\, d\HH^{N-1}\,.
\end{eqnarray*}
Hence we may assume without loss of generality
that for each $n\in \Nb$, $\|v_n - u\|_{L^\infty(\O;\Rb^d)} < \d_0/2$,
and thus $\dist(v_n^\pm(x),\M) \leq |v_n^\pm(x)-u^\pm(x)|< \d_0/2$ for
$\HH^{N-1}$-a.e. $x \in S_{v_n}$. In particular, we can define
$$u_n:=\Pi(v_n)\,, $$
and then $u_n\in BV(\O;\M)$, $u_n\to u$ in $L^1(\O;\Rb^d)$.
Moreover, one may check that for each $n\in\mathbb{N}$,
$S_{u_n}\subset S_{v_n}$ so that
$\HH^{N-1}\big(S_{u_n}\setminus(J_{u_n}\cap J_{v_n})\big)\leq
\HH^{N-1}(S_{u_n}\setminus J_{u_n})+ \HH^{N-1}(S_{v_n}\setminus
J_{v_n})=0$, and
$$u_n^\pm(x)=\Pi(v_n^\pm(x)) \quad \text{ and } \quad \nu_{u_n}(x)=\nu_{v_n}(x)
\quad\text{ for every $x\in J_{u_n}\cap J_{v_n}$}\,.$$
Consequently,
\begin{multline}\label{estilimsurf}
\limsup_{n\to+\infty}\, \int_{A\cap S_{u_n}}\vartheta_{\rm
hom}(u_n^+,u_n^-,\nu_{u_n})\, d\HH^{N-1}  \,\leq \\
\leq \limsup_{n\to+\infty}\int_{A\cap S_{v_n}}
\hat\vartheta_{\rm hom}(v_n^+,v_n^-,\nu_{v_n})\, d\HH^{N-1}\,\leq\\
\leq  C|Du|(A\setminus S_u)+ \int_{A\cap S_{u}}\vartheta_{\rm
hom}(u^+,u^-,\nu_{u})\, d\HH^{N-1}\,.
\end{multline}
Since $u_n$ takes a finite number of values, Proposition
\ref{reppart} and Lemma \ref{upbdsurf} yield 
\begin{equation}\label{estisurfn}
\F(u_n,A\cap S_{u_n})\leq \int_{A\cap S_{u_n}}\vartheta_{\rm
hom}(u_n^+,u_n^-,\nu_{u_n})\, d\HH^{N-1}\,,
\end{equation}
and, in view of Lemma  \ref{measbis},
\begin{equation}\label{estidiffn}
\F(u_n,A\setminus S_{u_n}) \leq C\LL^N(A)\,.
\end{equation}
Combining \eqref{estilimsurf}, \eqref{estisurfn} and \eqref{estidiffn}, we deduce
\begin{eqnarray*}
\limsup_{n\to+\infty} \,\F(u_n,A) & = & \limsup_{n\to+\infty}\big(\F(u_n,A\setminus S_{u_n})+\F(u_n,A\cap S_{u_n})\big)\\
& \leq & \int_{A\cap S_{u}}\vartheta_{\rm hom}(u^+,u^-,\nu_{u})\,
d\HH^{N-1}+ C\big(\LL^N(A)+|Du|(A\setminus S_u)\big)\,.
\end{eqnarray*}
On the other hand, $\F(\cdot,A)$ is lower semicontinuous with
respect to the strong $L^1(A;\Rb^d)$-convergence, and thus
$\ds\F(u,A)\leq \liminf_{n\to+\infty}\,\F(u_n,A)$ which leads  to
$$\F(u,A)\leq  \int_{A\cap S_{u}}\vartheta_{\rm hom}(u^+,u^-,\nu_{u})\, d\HH^{N-1}+
C\big(\LL^N(A)+|Du|(A\setminus S_u)\big)\,.$$ Since $A$ is
arbitrary, the  above inequality holds for any open set $A\in\A(\O)$
and, by Remark \ref{measconstr}, it also holds if $A$ is any Borel
subset of $\O$. Then taking $A=\O\cap S_u$ yields the desired
inequality.
\end{proof}

\section{The lower bound}

\noindent We adress in this section with the $\G$-$\liminf$
inequality. Using the blow-up method, we follow the approach of
\cite{FM2}, estimating separately the Cantor part and the jump part,
while the bulk part is obtained exactly as in the $W^{1,1}$ analysis, see
\cite[Lemma 5.2]{BM}.

\begin{lemma}\label{lowerboundBV}
For every $u \in BV(\O;\M)$, we have  $\F(u,\O)\geq \F_{\rm hom}
(u)$.
\end{lemma}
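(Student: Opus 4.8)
\textbf{The plan} is to argue by the blow-up method of \cite{FM2}, treating separately the absolutely continuous, jump and Cantor parts of the limit energy; the bulk term will come exactly as in the $W^{1,1}$ analysis \cite[Lemma~5.2]{BM}, the other two being the real work. Fix $u\in BV(\O;\M)$ and an arbitrary sequence $u_n\to u$ in $L^1(\O;\Rb^d)$. We may assume $\ell:=\liminf_n\F_{\e_n}(u_n,\O)<+\infty$ and, passing to a subsequence, that the $\liminf$ is a limit; then $\{u_n\}\subset W^{1,1}(\O;\M)$ and, by $(H_2)$, $\sup_n\int_\O|\nabla u_n|\,dx<+\infty$. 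The energy measures $\mu_n:=f(\cdot/\e_n,\nabla u_n)\,\LL^N\res\O$ have uniformly bounded mass, so up to a further subsequence $\mu_n\wto^*\mu$ in the sense of measures on $\overline\O$, whence $\ell=\mu(\overline\O)\geq\mu(\O)$. Denoting again by $\mu$ its restriction to $\O$ and taking its Radon--Nikodym decomposition with respect to $\LL^N$, $\HH^{N-1}\res(\O\cap S_u)$ and $|D^cu|$, it suffices to prove
\[
\frac{d\mu}{d\LL^N}(x_0)\geq Tf_\hom\big(u(x_0),\nabla u(x_0)\big)\qquad\text{for $\LL^N$-a.e.\ }x_0\in\O,
\]
\[
\frac{d\mu}{d\HH^{N-1}\res(\O\cap S_u)}(x_0)\geq\vartheta_\hom\big(u^+(x_0),u^-(x_0),\nu_u(x_0)\big)\qquad\text{for $\HH^{N-1}$-a.e.\ }x_0\in\O\cap S_u,
\]
\[
\frac{d\mu}{d|D^cu|}(x_0)\geq Tf^\infty_\hom\Big(\tilde u(x_0),\tfrac{dD^cu}{d|D^cu|}(x_0)\Big)\qquad\text{for $|D^cu|$-a.e.\ }x_0\in\O,
\]
since summing these and integrating gives $\ell\geq\mu(\O)\geq\F_\hom(u)$, and $\{u_n\}$ was arbitrary.

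For the bulk bound we localize at a point $x_0$ of approximate differentiability of $u$ which is also a Besicovitch point of $d\mu/d\LL^N$, take radii $\rho$ with $\mu(\partial Q(x_0,\rho))=0$, rescale $Q(x_0,\rho)$ to $Q$ and extract diagonally in $(n,\rho)$. The rescaled maps converge to the affine profile $y\mapsto u(x_0)+\nabla u(x_0)y$, their values cluster near $u(x_0)\in\M$, and $d\mu/d\LL^N(x_0)$ is bounded below by the cell problem for $g_\hom$ at $\big(u(x_0),\nabla u(x_0)\big)$ once the constraint is restored by the tangential projection device of \cite[Lemma~5.2]{BM}; Proposition~\ref{properties1}(i) identifies that value with $Tf_\hom(u(x_0),\nabla u(x_0))$.

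For the jump bound, pick $x_0\in S_u$ with $a:=u^+(x_0)$, $b:=u^-(x_0)$, $\nu_1:=\nu_u(x_0)$, at which $u(x_0+\rho\,\cdot)\to u_{a,b,\nu_1}$ in $L^1(Q_{\nu_1})$ and $|Du|(Q_{\nu_1}(x_0,\rho))/\rho^{N-1}\to|a-b|$. A diagonal extraction provides $\rho_k\downarrow0$, $n(k)\uparrow\infty$ with $v_k:=u_{n(k)}(x_0+\rho_k\,\cdot)\to u_{a,b,\nu_1}$ in $L^1(Q_{\nu_1};\Rb^d)$, $\{\nabla v_k\}$ bounded in $L^1(Q_{\nu_1})$, $t_k:=\rho_k/\e_{n(k)}\to+\infty$, and
\[
\frac{d\mu}{d\HH^{N-1}\res(\O\cap S_u)}(x_0)=\lim_k\frac{1}{\rho_k^{N-1}}\int_{Q_{\nu_1}(x_0,\rho_k)}f\Big(\tfrac{x}{\e_{n(k)}},\nabla u_{n(k)}\Big)\,dx.
\]
Using $(H_4)$, the positive $1$-homogeneity of $f^\infty$ and H\"older's inequality (boundedness of $\int|\nabla v_k|$), one may replace $f$ by $f^\infty$ in this limit; rescaling then by $\e_{n(k)}$ turns the right-hand side into $t_k^{-(N-1)}\int_{t_kQ_{\nu_1}}f^\infty(z,\nabla w_k)\,dz$ with $w_k:=u_{n(k)}(\e_{n(k)}\,\cdot)$, which has exactly the form in \eqref{thetahom}. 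It then remains to force the boundary datum: a De~Giorgi slicing on a thin layer near $\partial(t_kQ_{\nu_1})$ selects a sub-cube on which $w_k$ is $L^1$-close to the step profile, and one glues in an $\M$-valued transition (built from geodesics of $\M$, as in the classes $\A_t$ and $\mathcal B_\e$) connecting $w_k$ to $u_{a_k,b_k,\nu_1}$ for suitable $a_k\to a$, $b_k\to b$ in $\M$, the cost of this surgery being controlled by $(H_2)$ and the smallness of the slice. The competitor is then admissible in \eqref{thetahom}, so the limit above is $\geq\liminf_k\vartheta_\hom(a_k,b_k,\nu_1)=\vartheta_\hom(a,b,\nu_1)$ by Propositions~\ref{limitsurfenerg} and \ref{contsurfenerg}. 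This step is the main obstacle: the competitor must be made \emph{exactly} admissible for the constrained geodesic-type cell problem \eqref{thetahom} while staying in $\M$ and without spoiling the two-scale regime $\rho_k\gg\e_{n(k)}$, which forces the combination of the slicing with an $\M$-valued interpolation (or with the projection of Proposition~\ref{proj}) and the use of the equivalent descriptions of $\vartheta_\hom$ from Propositions~\ref{limsurf2}--\ref{contsurfenerg}.

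The Cantor bound follows the same pattern as \cite{FM2}. At $|D^cu|$-a.e.\ $x_0$ the matrix $A(x_0)=\eta\otimes\nu$ is rank one (Alberti) and tangential, $A(x_0)\in[T_{\tilde u(x_0)}(\M)]^N$ by Lemma~\ref{manifold}. Choosing $t_\rho:=|D^cu|(Q(x_0,\rho))/\rho^N\to+\infty$, the rescaled maps $\big(u(x_0+\rho\,\cdot)-c_\rho\big)/(t_\rho\rho)$ converge to a one-dimensional profile $y\mapsto\psi(y\cdot\nu)\eta$ with $\psi$ nondecreasing; since $t_\rho\to+\infty$ the gradients at play are large, so $(H_4)$ again permits the passage from $f$ to $f^\infty$, and after rescaling by $\e_n$ and a boundary correction as above the competitor becomes admissible for the cell problem defining $(g^\infty)_\hom\big(\tilde u(x_0),A(x_0)\big)=T(f^\infty)_\hom\big(\tilde u(x_0),A(x_0)\big)$, which is $\geq Tf^\infty_\hom\big(\tilde u(x_0),A(x_0)\big)$ by \eqref{remdensinf}; the manifold constraint is restored once more via the projection argument of \cite[Lemma~5.2]{BM}. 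Combining the three density estimates completes the proof.
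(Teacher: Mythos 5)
Your proposal follows essentially the same route as the paper: blow-up to the three Radon–Nikod\'ym densities, bulk term as in \cite[Lemma~5.2]{BM}, jump term via rescaling, $(H_4)$ to pass to $f^\infty$, De~Giorgi slicing plus an $\M$-valued geodesic transition and the projection of Proposition~\ref{proj} to produce a competitor in the classes of Propositions~\ref{limitsurfenerg}--\ref{limsurf2}, and Cantor term via Alberti's theorem, the unconstrained integrand $g$ with its cell formula, $(H_4)$, and \eqref{remdensinf}. One point worth spelling out when you write the Cantor step in full: after the blow-up you must first upgrade the rescaled sequence to one converging uniformly to $\tilde u(x_0)$ (the truncation argument of \cite[Lemma~5.2]{BM}, using \cite[Lemma~2.12]{FM2}) so that the $s$-variable in $g$ can be frozen at $s_0$, and the boundary correction there is not the geodesic slicing of the jump part but a cut-off to match a mollified one-dimensional profile followed by a periodization, so as to fit the $W^{1,\infty}_\#$ cell formula for $(g^\infty)_\hom$.
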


\begin{proof}
Let $u \in BV(\O;\M)$ and $\{u_n\} \subset W^{1,1}(\O;\M)$ be such
that $$\F(u,\O)= \lim_{n \to +\infty}\int_\O f
\left(\frac{x}{\e_n},\nabla u_n \right)dx\,.$$ Define the sequence
of nonnegative Radon measures
$$\mu_n:=f \left(\frac{\cdot}{\e_n},\nabla u_n \right)\LL^N
\res\,  \O\,.$$
Up to the extraction of a subsequence, we can assume that
there exists a nonnegative Radon measure $\mu \in \M(\O)$
such that $\mu_n \xrightharpoonup[]{*} \mu$ in $\M(\O)$. By the
Besicovitch Differentiation Theorem,
we
can split $\mu$ into the sum of four mutually singular nonnegative measures
$\mu=\mu^a + \mu^j+\mu^c+\mu^s$ where $\mu^a \ll \mathcal L^N$,
$\mu^j \ll \HH^{N-1}\res\,  S_u$ and $\mu^c \ll |D^c u|$. Since we
have $\mu(\O) \leq \F(u,\O)$, it is enough to check that
\begin{equation}\label{lambda^a}
\frac{d\mu}{d\LL^N}(x_0) \geq Tf_\hom(u(x_0),\nabla u(x_0))\quad
\text{ for }\LL^N\text{-a.e. }x_0 \in \O\,,
\end{equation}
\begin{equation}\label{lambda^c}
\frac{d\mu}{d|D^c u|}(x_0) \geq Tf_\hom^\infty\left(\tilde
u(x_0),\frac{dD^c u}{d|D^c u|}(x_0)\right)\quad \text{ for }|D^c
u|\text{-a.e. }x_0 \in \O\,,
\end{equation}
and
\begin{equation}\label{lambda^j}
\frac{d\mu}{d\HH^{N-1}\res\,  S_u}(x_0) \geq
\vartheta_\hom(u^+(x_0),u^-(x_0),\nu_u(x_0))\quad \text{ for
}\HH^{N-1}\text{-a.e. }x_0 \in S_u\,.
\end{equation}
The proof of (\ref{lambda^a}) follows the one in \cite[Lemma
5.2]{BM} and we shall omit it. The proofs of (\ref{lambda^c}) and
(\ref{lambda^j}) are postponed to the remaining of this subsection.
\end{proof}

\noindent {\bf Proof of \eqref{lambda^c}.} The lower bound on the
density of the Cantor part will be achieved in three steps. We shall
use the blow-up method to reduce the study to constant limits, and
then a truncation argument as in the proof of \cite[Lemma 5.2]{BM},
to replace the starting sequence by a uniformly converging one.
\vskip5pt

{\bf Step 1.} Choose a point $x_0 \in \O$ such that
\begin{equation}\label{cantor2}
\lim_{\rho \to 0^+}\med_{Q(x_0,\rho)}|u(x)-\tilde u(x_0)|\, dx=0\,,
\end{equation}
\begin{equation}\label{cantor3}
A(x_0):=\lim_{\rho \to
0^+}\frac{Du(Q(x_0,\rho))}{|Du|(Q(x_0,\rho))}\in [T_{\tilde
u(x_0)}(\M)]^N\;\text{ is a rank one matrix with }\;|A(x_0)|=1\,,
\end{equation}
\begin{equation}\label{cantor1}
\frac{d\mu}{d|D^cu|}(x_0) \quad \text{exists and is finite and}\quad
\frac{d|Du|}{d|D^cu|}(x_0)=1\,,
\end{equation}
\begin{equation}\label{cantor4}
\lim_{\rho \to 0^+}\frac{|Du|(Q(x_0,\rho))}{\rho^{N-1}}=0 \quad\text{and}\quad
\lim_{\rho \to 0^+}\frac{|Du|(Q(x_0,\rho))}{\rho^N}=+\infty\,,
\end{equation}
\begin{equation}\label{cantor5}
\liminf_{\rho \to 0^+}\,\frac{|Du|(Q(x_0,\rho) \setminus
Q(x_0,\tau\rho))}{|Du|(Q(x_0,\rho))}\leq 1-\tau^N\quad\text{for every $0<\tau<1$}\,.
\end{equation}
It turns out that $|D^c u|$-a.e. $x_0\in\O$ satisfy these
properties. Indeed (\ref{cantor1}) is immediate while
(\ref{cantor2}) is a consequence of the fact that $S_u$ is $|D^c
u|$-negligible. Property (\ref{cantor3}) comes from Alberti Rank One
Theorem together with Lemma \ref{manifold}, (\ref{cantor4}) from
\cite[Proposition~3.92~(a),~(c)]{AFP} and (\ref{cantor5}) from
\cite[Lemma~2.13]{FM2}. Write $A(x_0)=a \otimes \nu$ for some $a \in
\M$ and $\nu \in \SN$. Upon rotating the coordinate axis, one may
assume without loss of generality that $\nu=e_N$. To simplify the
notations, we set $s_0:=\tilde u(x_0)$ and $A_0:=A(x_0)$. \vskip5pt

Fix $t\in(0,1)$ arbitrarily close to $1$, and in view of
\eqref{cantor5}, find a sequence $\rho_k \searrow 0^+$ such
that
\begin{equation}\label{cantor5bis}
\limsup_{k\to+\infty}\,\frac{|Du|(Q(x_0,\rho_k) \setminus
Q(x_0,t\rho_k))}{|Du|(Q(x_0,\rho_k))}\leq 1-t^N\,.
\end{equation}
Now fix $t<\gamma<1$ and set $\gamma':=(1+\gamma)/2$. Using (\ref{cantor1}), we derive
\begin{multline}\label{infmu}
\frac{d\mu}{d|D^cu|}(x_0) =\lim_{k \to +\infty}\frac{\mu(Q(x_0,\rho_k))}{|Du|(Q(x_0,\rho_k))}\geq
\limsup_{k \to +\infty}\,\frac{\mu(\overline{Q(x_0,\gamma'\rho_k)})}{|Du|(Q(x_0,\rho_k))}\geq\\
\geq \limsup_{k\to+\infty}\,\limsup_{n\to+\infty}\,
\frac{1}{|Du|(Q(x_0,\rho_k))}\int_{Q(x_0,\gamma'
\rho_k)}f\bigg(\frac{x}{\e_n},\nabla u_n\bigg)dx\,.
\end{multline}
Arguing as in the proof of \cite[Lemma 5.2]{BM} with minor
modifications, we construct a sequence $\{\bar v_{n}\} \subset
W^{1,\infty}(Q(0,\rho_k);\Rb^d)$ satisfying $\bar v_{n} \to
u(x_0+\cdot)$ in $L^1(Q(0,\rho_k);\Rb^d)$ and
\begin{equation}\label{passuv}
\limsup_{n\to+\infty}\, \int_{Q(x_0,\gamma'
\rho_k)}f\bigg(\frac{x}{\e_n},\nabla u_n\bigg)\, dx\geq
\limsup_{n\to+\infty}\, \int_{Q(0,\gamma
\rho_k)}g\bigg(\frac{x}{\e_n}, \bar v_n, \nabla \bar v_n\bigg)\,
dx\,,
\end{equation}
where $g$ is given by \eqref{defg}. Setting $w_{n,k}(x):=\bar
v_{n}(\rho_k\, x)$, a change of variable together with \eqref{infmu}
and \eqref{passuv} yields
\begin{equation}\label{c1}
\frac{d\mu}{d|D^cu|}(x_0) \geq \limsup_{k \to +\infty} \limsup_{n
\to +\infty}\, \frac{\rho_k^N}{|Du|(Q(x_0,\rho_k))} \int_{\gamma Q}
g\left(\frac{\rho_k\, x}{\e_n},w_{n,k}, \frac{1}{\rho_k} \nabla
w_{n,k}\right)dx\,.
\end{equation}
Then we infer from (\ref{cantor2}) that
\begin{equation}\label{c2}
\lim_{k \to +\infty} \lim_{n \to +\infty}\int_Q|w_{n,k}-s_0|\, dx=0\,,
\end{equation}
and
\begin{multline}\label{c3}
\lim_{k \to +\infty} \lim_{n \to
+\infty}\frac{\rho_k^{N-1}}{|Du|(Q(x_0,\rho_k))}\int_Q
\bigg|w_{n,k}(x)-u(x_0+\rho_k x)\\
- \int_Q \big(w_{n,k}(y)-u(x_0+\rho_k y)\big)\, dy\bigg|\, dx =0\,.
\end{multline}
By \eqref{c1}, \eqref{c2} and (\ref{c3}), we can extract a diagonal sequence $n_k
\to+\infty$ such that  $\d_k:=\e_{n_k}/\rho_k\to 0$,
$w_k:=w_{n_k,k}\to s_0$ in
$L^1(Q;\Rb^d)$,
$$\frac{d\mu}{d|D^cu|}(x_0) \geq \limsup_{k \to +\infty}\,
\frac{\rho_k^N}{|Du|(Q(x_0,\rho_k))} \int_{\gamma Q}
g\left(\frac{x}{\d_k},w_k, \frac{1}{\rho_k} \nabla w_k\right)dx\,,$$
and
\begin{equation}\label{c5}
\lim_{k \to +\infty}\frac{\rho_k^{N-1}}{|Du|(Q(x_0,\rho_k))}\int_Q
\bigg|w_k(x)-u(x_0+\rho_k\, x) - \int_Q \big(w_k(y)-u(x_0+\rho_k\, y)\big)\,
dy\bigg|\,dx =0\,.
\end{equation}
\vskip5pt

{\bf Step 2.} Now we reproduce the truncation argument used in Step
2 of the proof of \cite[Lemma~5.2]{BM} with minor modifications
(make use of (\ref{cantor4}) and \cite[Lemma~2.12]{FM2} instead of
\cite[Lemma~2.6]{FM}, see \cite{FM2} for details). Setting
$a_k:=\int_Q w_k(y)\, dy$, it yields a sequence of cut-off functions
$\{\zeta_k\} \subset \C^\infty_c(\Rb;[0,1])$ such that
$\zeta_k(\tau)=1$ if $|\tau| \leq s_k$, $\zeta_k(\tau)=0$ is
$|\tau|\geq t_k$ for some
$$\|w_k-a_k\|^{1/2}_{L^1(Q;\Rb^d)}<s_k<t_k<\|w_k-a_k\|^{1/3}_{L^1(Q;\Rb^d)}\,,$$
for which $\overline   w_k:=a_k +\zeta_k(|w_k-a_k|)(w_k-a_k)\in W^{1,1}(Q;\Rb^d)$
satisfies
$\overline w_k
\to s_0$ in $L^\infty(Q;\Rb^d)$ and
\begin{equation}\label{c6}
\frac{d\mu}{d|D^cu|}(x_0) \geq \limsup_{k \to +\infty}\,
\frac{\rho_k^N}{|Du|(Q(x_0,\rho_k))} \int_{\gamma Q}
g\left(\frac{x}{\d_k},\overline   w_k, \frac{1}{\rho_k} \nabla
\overline   w_k\right)dx\,.
\end{equation}
In view of the coercivity condition (\ref{pgrowth}), (\ref{cantor1}) and (\ref{c6}),
$$\sup_{k \in \Nb}\,  \frac{\rho_k^{N-1}}{|Du|(Q(x_0,\rho_k))} \int_{\gamma Q} |\nabla \overline w_k|\, dx<+\infty\,.$$
Therefore,  (\ref{moduluscont}), (\ref{c6}) and $\|\overline w_k -s_0\|_{L^\infty(Q;\Rb^d)}\to 0$ lead to
$$\frac{d\mu}{d|D^cu|}(x_0)  \geq \limsup_{k \to +\infty}\,
\frac{\rho_k^N}{|Du|(Q(x_0,\rho_k))} \int_{\gamma Q}
g\left(\frac{x}{\d_k},s_0, \frac{1}{\rho_k} \nabla \overline
w_k\right)dx\,.$$
Next we define the three following sequences for every $x\in Q$,
$$\left\{\begin{array}{l}
\ds \overline  u_k(x):=\frac{\rho_k^{N-1}}{|Du|(Q(x_0,\rho_k))}
\bigg(u(x_0+\rho_k \, x) -\int_Q u(x_0 +\rho_k\, y)\, dy \bigg)\,,\\[0.4cm]
\ds z_k(x):=\frac{\rho_k^{N-1}}{|Du|(Q(x_0,\rho_k))} \big(w_k(x) -a_k \big)\,,\\[0.4cm]
\ds \overline   z_k(x):=\frac{\rho_k^{N-1}}{|Du|(Q(x_0,\rho_k))}
\big(\overline   w_k(x) - a_k \big)\,.
\end{array}\right.$$
As a consequence of (\ref{c5}) we have  $\|z_k - \overline  u_k
\|_{L^1(Q;\Rb^d)} \to 0$, and since
$$\int_Q \overline  u_k(x)\, dx = 0 \quad \text{ and } \quad |D\overline  u_k|(Q)=1\,,$$
it follows that the sequence $\{\overline u_k\}$ is bounded in $BV(Q;\Rb^d)$ and thus  relatively
compact in $L^1(Q;\Rb^d)$. Hence $\{\overline u_k\}$ is equi-integrable, and consequently so
is $\{z_k\}$. Up to a subsequence, $\overline u_k$ converges in $L^1(Q;\Rb^d)$ to some  function $v\in BV(Q;\Rb^d)$, and then $z_k\to v$ in $L^1(Q;\Rb^d)$.
By \cite[Theorem~3.95]{AFP}
the limit $v$ is representable by
$$v(x)=a\, \theta(x_N)$$ for some increasing function $\theta\in BV((-1/2,1/2);\Rb)$ (recall that we assume $A_0=a \otimes e_N$).

By construction, $\overline w_k$ coincides with $w_k$ in the set
$\{|w_k-a_k| \leq s_k\}$. Hence
\begin{multline}\label{ei}
\|\overline   z_k - z_k\|_{L^1(Q;\Rb^d)} =  \frac{\rho_k^{N-1}}{|Du|(Q(x_0,\rho_k))} \int_{\{|w_k-a_k|> s_k\}}|w_k(x)-\overline   w_k(x)|\, dx\leq \\
 \leq \frac{\rho_k^{N-1}}{|Du|(Q(x_0,\rho_k))} \int_{\{|w_k-a_k|> s_k\}}|w_k(x)-a_k|\, dx=  \int_{\{|w_k-a_k|> s_k\}} |z_k(x)|\, dx\,.
\end{multline}
By Chebyshev inequality, we have
\begin{equation}\label{ln}\LL^N(\{|w_k-a_k|> s_k\}) \leq \frac{1}{s_k}\int_Q|w_k(x)-a_k|\, dx
\leq \|w_k-a_k\|^{1/2}_{L^1(Q;\Rb^d)}\to 0\,,
\end{equation}
and thus (\ref{ei}), (\ref{ln}) and the equi-integrability of $\{z_k\}$ imply $\|\overline   z_k -
z_k\|_{L^1(Q;\Rb^d)} \to 0$. Therefore $\overline   z_k
\to v$ in $L^1(Q;\Rb^d)$, and setting
$\a_k:=|Du|(Q(x_0,\rho_k))/\rho_k^N \to +\infty$,
\begin{equation}\label{tilde}
\frac{d\mu}{d|D^cu|}(x_0) \geq \limsup_{k \to +\infty}\,
\frac{1}{\a_k} \int_{\gamma Q} g\left(\frac{x}{\d_k},s_0,\a_k \nabla
\overline   z_k\right)dx\,.
\end{equation}
Using (\ref{grec}) and the positive $1$-homogeneity of the recession function
$g^\infty(y,s,\cdot)$, we infer that
\begin{align*}
 \int_{\gamma Q} \left| \frac{1}{\a_k}\, g\left(\frac{x}{\d_k},s_0,\a_k
\nabla \overline   z_k\right) -
g^\infty\left(\frac{x}{\d_k},s_0, \nabla \overline   z_k\right)\right| dx &\leq \frac{C}{\a_k} \int_{\gamma Q} (1+ \a_k^{1-q} |\nabla \overline z_k|^{1-q})\, dx\\
&\leq C\big(\a_k^{-1} + \a_k^{-q} \|\nabla \overline
z_k\|^{1-q}_{L^1(\gamma Q;\Rb^{d \times N})}\big) \to 0\,,
\end{align*}
where we have used H\"older's inequality and the boundedness of
$\{\nabla \overline   z_k\}$ in $L^1(\gamma Q;\Rb^{d
\times N})$ (which follows from (\ref{pgrowth}) and \eqref{tilde}).
Consequently,
$$\frac{d\mu}{d|D^cu|}(x_0) \geq \limsup_{k \to +\infty}\,
\int_{\gamma Q} g^\infty \left(\frac{x}{\d_k},s_0,\nabla \overline
z_k\right)dx\,.$$
\vskip5pt

{\bf Step 3.} Extend $\theta$ continuously to $\Rb$ by the values of
its traces at $\pm 1/2$. Define $v_k(x)=v_k(x_N):=a \theta * \varrho_k (x_N)$ where
$\varrho_k$ is a sequence of (one dimensional) mollifiers. Then $v_k
\to v$ in $L^1(Q;\Rb^d)$ and thus, since $\overline  u_k - v_k \to
0$ in $L^1(Q;\Rb^d)$, it follows that (up to a subsequence)
\begin{equation}\label{D}
D\overline  u_k(\tau Q) - Dv_k(\tau Q) \to 0
\end{equation}
for $\LL^1$-a.e. $\tau \in (0,1)$. Fix $\tau\in (t,\gamma)$
for which \eqref{D} holds. Since $\|\bar z_k -v_k\|_{L^1(Q;\Rb^d)}\to 0$, one can use a standard cut-off
function argument (see \cite[p. 29--30]{FM2}) to modify the sequence $\{\overline z_k\}$
and  produce a new sequence
$\{\overline\varphi_k\} \subset W^{1,\infty}(\tau Q;\Rb^d)$ satisfying
$\overline \varphi_k \to v$ in $L^1(\tau Q;\Rb^d)$, $\overline
\varphi_k=v_k$ on a neighborhood of $\partial (\tau Q)$ and
\begin{equation}\label{may}
\frac{d\mu}{d|D^cu|}(x_0) \geq \limsup_{k \to +\infty} \int_{\tau Q}
g^\infty \left(\frac{x}{\d_k},s_0,\nabla \overline
\varphi_k\right)dx\,.
\end{equation}
A simple computation shows that
\begin{equation}\label{D2}
D\overline  u_k (\tau Q)=\frac{Du(Q(x_0,\tau
\rho_k))}{|Du|(Q(x_0,\rho_k))}\quad \text{ and }\quad Dv_k(\tau
Q)=\tau^N \,A_k\,,
\end{equation}
where $A_k \in \Rb^{d \times N}$ is the matrix given by
$$A_k:= a \otimes e_N \frac{\theta *\varrho_k (\tau/2)- \theta *\varrho_k (-\tau/2)}{\tau}\,.$$
We observe that $A_k$ is bounded in $k$ since $\theta$ has bounded
variation.

Let $m_k:=[\tau/\delta_k]+1\in \Nb$, and define for $x=(x',x_N)\in \delta_km_k Q$,
$$\varphi_k(x):=\begin{cases}
\overline \varphi_k(x)-A_kx & \text{if $x\in \tau Q$}\,,\\
v_k(x_N)-A_k\, x & \text{if $|x_N|\leq \tau/2$ and $|x'|\geq \tau/2$}\, ,\\
v_k(\tau/2)-A_k(x',\tau/2) & \text{if $x_N\geq \tau/2$}\,,\\
v_k(-\tau/2)-A_k(x',-\tau/2) & \text{if $x_N\leq -\tau/2$}\,.
\end{cases}$$
One may check that $\varphi_k\in W^{1,\infty}(\delta_km_kQ;\Rb^d)$, $\varphi_k$ is  $\delta_km_k$-periodic, and that
\begin{equation}\label{periodization}
 \limsup_{k \to +\infty} \int_{\tau Q}
g^\infty \left(\frac{x}{\d_k},s_0,\nabla \overline
\varphi_k\right)dx= \limsup_{k \to +\infty} \int_{\delta_km_k Q}
g^\infty \left(\frac{x}{\d_k},s_0,A_k+\nabla
\varphi_k\right)dx\,.
\end{equation}
Setting $\phi_k(y):=\tau^{N}\delta_k^{-1}\varphi_k(\delta_k y)$ for $y\in m_k Q$, we have $\phi_k\in W^{1,\infty}_{\#}(m_kQ;\Rb^d)$, and a change of variables yields
\begin{align}
\nonumber\int_{\delta_km_k Q} g^\infty
\left(\frac{x}{\d_k},s_0,A_k+\nabla varphi_k\right)dx&=
\tau^{-N}\delta_k^Nm_k^N\med_{m_k Q} g^\infty
\left(y,s_0,\tau^{N}A_k+\nabla
\phi_k\right)dy \\
\label{compper} &\geq \tau^{-N}\delta_k^Nm_k^N (g^\infty)_{\rm hom}(s_0,\tau^N A_k)\,,
\end{align}
since $(g^\infty)_\hom$ can be computed as  follows
(see Remark \ref{reminfty} and {\it e.g., } \cite[Remark~14.6]{BD}),
\begin{eqnarray*}
(g^\infty)_\hom(s,\xi)
=  \inf \left\{\med_{(0,m)^N} g^\infty(y,s,\xi +\nabla \phi(y))\, dy : m\in \Nb,\,
\phi \in W^{1,\infty}_\#((0,m)^N;\Rb^d) \right\}\,.
\end{eqnarray*}
Gathering \eqref{may}, \eqref{periodization} and \eqref{compper}, we derive
\begin{equation*}
\frac{d\mu}{d|D^cu|}(x_0) \geq\limsup_{k \to +\infty}
\,(g^\infty)_\hom(s_0,\tau^N A_k)\,.
\end{equation*}
In view (\ref{D}), (\ref{D2}), (\ref{cantor5bis}) and (\ref{cantor3}), we have
\begin{multline*}
\limsup_{k \to +\infty}|\tau^N A_k -A_0|=\limsup_{k \to +\infty}|Dv_k(\tau Q) - A_0|=\limsup_{k \to +\infty}|D\overline  u_k(\tau Q)-A_0|=\\
=\limsup_{k \to +\infty}\left| \frac{Du(Q(x_0,\tau \rho_k))}{|Du|(Q(x_0,\rho_k))}-A_0\right|
= \limsup_{k \to +\infty}
\frac{|Du|(Q(x_0,\rho_k)\setminus Q(x_0,\tau\rho_k))}
{|Du|(Q(x_0,\rho_k))}\leq 1- t^{N}\,.
\end{multline*}
By Remark \ref{reminfty},  $(g^\infty)_\hom(s_0,\cdot)$ is Lipschitz continuous, and consequently
$$\frac{d\mu}{d|D^cu|}(x_0) \geq (g^\infty)_\hom(s_0,A_0)-C(1-t^{N})\,.$$
From the arbitrariness of $t$, we finally infer that
$$\frac{d\mu}{d|D^cu|}(x_0) \geq (g^\infty)_\hom(s_0,A_0)\,. $$
Since $s_0 \in \M$ and $A_0 \in [T_{s_0}(\M)]^N$, Remark
\ref{reminfty} and \eqref{remdensinf} yield
$(g^\infty)_\hom(s_0,A_0)= T(f^\infty)_\hom(s_0,A_0)\geq
Tf_\hom^\infty(s_0,A_0)$, and the proof is complete. \prbox
\vskip10pt

\noindent{\bf Proof of \eqref{lambda^j}.} The strategy used in that
part follows the one already used for the bulk and Cantor parts. It
still rests on the blow up method together with the projection
argument in Proposition~\ref{proj}. \vskip5pt

{\bf Step 1.} Let $x_0 \in S_u$ be such that
\begin{equation}\label{jump1}
\lim_{\rho \to 0^+}\med_{Q_{\nu_u(x_0)}^\pm(x_0,\rho)} |u(x)-u^\pm(x_0)|\, dx=0\,,
\end{equation}
where $u^\pm(x_0) \in \M$,
\begin{equation}\label{jump2}
\lim_{\rho \to 0^+}\frac{\HH^{N-1}(S_u \cap Q_{\nu_u(x_0)}(x_0,\rho))}{\rho^{N-1}}=1\,,
\end{equation}
and such that the Radon-Nikod\'ym derivative of $\mu$ with respect
to $\HH^{N-1}\res \, S_u$ exists and is finite. By
Lemma~\ref{manifold}, Theorem 3.78 and Theorem 2.83 (i) in
\cite{AFP} (with cubes instead of balls), it turns out that
$\HH^{N-1}$-a.e.  $x_0\in S_u$ satisfy these properties. Set
$s_0^\pm:=u^\pm(x_0)$,  $\nu_0:=\nu_u(x_0)$.

Up to a further subsequence, we may assume that  $(1+|\nabla u_n|) \LL^N
\res\, \O \xrightharpoonup[]{*} \lambda$ in $\M(\O)$ for some nonnegative
Radon measure $\lambda \in \M(\O)$.  Consider a sequence $\rho_k \searrow 0^+$ satisfying $\mu(\partial
Q_{\nu_0}(x_0,\rho_k))=\lambda(\partial
Q_{\nu_0}(x_0,\rho_k))=0$ for each $k \in \Nb$. Using
(\ref{jump2}) we derive
\begin{multline*}
\frac{d\mu}{d\HH^{N-1}\res \, S_u}(x_0)\lim_{k \to +\infty}\frac{\mu(Q_{\nu_0}(x_0,\rho_k))}{\HH^{N-1}(S_u \cap Q_{\nu_0}(x_0,\rho_k))}
=\lim_{k \to +\infty}\frac{\mu(Q_{\nu_0}(x_0,\rho_k))}{\rho_k^{N-1}}=\\
=  \lim_{k \to +\infty}\lim_{n \to
+\infty}\frac{1}{\rho_k^{N-1}}\int_{Q_{\nu_0}(x_0,\rho_k)}
f\left(\frac{x}{\e_n},\nabla u_n \right)dx\,.
\end{multline*}
Thanks to Theorem \ref{density}, one can assume without loss of
generality that $u_n \in \mathcal D(\O;\M)$ for each $n \in \Nb$.
Arguing exactly as in Step 1 of the proof of \cite[Lemma 5.2]{BM}
(with $Q_{\nu_0}(x_0,\rho_k)$ instead of $Q(x_0,\rho_k)$) we obtain
a sequence $\{v_n\} \subset \mathcal D(Q_{\nu_0}(0,\rho_k);\M)$ such
that $v_n \to u(x_0+\cdot)$ in $L^1(Q_{\nu_0}(0,\rho_k);\Rb^d)$ as
$n \to +\infty$, and
$$\frac{d\mu}{d\HH^{N-1}\res \, S_u}(x_0) \geq \limsup_{k \to +\infty}\,\limsup_{n \to +\infty}\,
\frac{1}{\rho_k^{N-1}}\int_{Q_{\nu_0}(0,\rho_k)}
f\left(\frac{x}{\e_n},\nabla v_n \right)dx$$
(note that the
construction process to obtain $v_n$ from $u_n$ does not affect the
manifold constraint). Changing variables and setting
$w_{n,k}(x)=v_n(\rho_k\, x)$ lead to
$$\frac{d\mu}{d\HH^{N-1}\res \, S_u}(x_0) \geq \limsup_{k \to
+\infty}\,\limsup_{n \to +\infty} \rho_k\int_{Q_{\nu_0}}
f\left(\frac{\rho_k \, x}{\e_n},\frac{1}{\rho_k}\nabla w_{n,k}
\right)dx\,.$$
Defining
$$u_0(x):=\begin{cases}
s_0^+ & \text{ if }  x\cdot \nu_0 > 0\,,\\
s_0^- & \text{ if }  x\cdot \nu_0 \leq 0\,,
\end{cases}$$
we infer from (\ref{jump1}) that
$$\lim_{k \to +\infty}\lim_{n \to +\infty}
\int_{Q_{\nu_0}}|w_{n,k}-u_0|\, dx=0\,.$$ By a standard diagonal
argument, we find a sequence $n_k \nearrow +\infty$ such that
 $\d_k:=\e_{n_k}/\rho_k\to 0$,  $w_k:=w_{n_k,k} \in \mathcal
D(Q_{\nu_0};\M)$ converges to $u_0$ in $L^1(Q_{\nu_0};\Rb^d)$, and
\begin{equation}\label{dhnwj}
\frac{d\mu}{d\HH^{N-1}\res \, S_u}(x_0) \geq \limsup_{k \to +\infty}\,
\rho_k\int_{Q_{\nu_0}}f\left(\frac{x}{\d_k},\frac{1}{\rho_k}\nabla
w_k \right)dx\,.
\end{equation}
According to $(H_4)$ and the positive $1$-homogeneity of $f^\infty(y,\cdot)$, we have
\begin{align}\label{dhnwj2}
\nonumber\int_{Q_{\nu_0}}\left|\rho_k\,
f\left(\frac{x}{\d_k},\frac{1}{\rho_k}\nabla w_k \right) -
f^\infty\left(\frac{x}{\d_k},\nabla w_k \right)\right| dx & \leq  C\rho_k\int_{Q_{\nu_0}} (1 + \rho_k^{q-1}
|\nabla w_k|^{1-q})\, dx\\
&\leq C\left(\rho_k +\rho_k^q \|\nabla
w_k\|^{1-q}_{L^1(Q_{\nu_0};\Rb^{d \times N})}\right)\,,
\end{align}
where we have used H\"older's inequality and $0<q<1$. From
(\ref{dhnwj}) and the coercivity condition $(H_2)$, it follows that
$\{\nabla w_k\}$ is uniformly bounded in $L^1(Q_{\nu_0};\Rb^{d
\times N})$. Gathering (\ref{dhnwj}) and (\ref{dhnwj2}) yields
\begin{equation}\label{j1}
\frac{d\mu}{d\HH^{N-1}\res \, S_u}(x_0) \geq \limsup_{k \to +\infty}\,
\int_{Q_{\nu_0}}f^\infty\left(\frac{x}{\d_k},\nabla w_k \right)dx\,.
\end{equation}
\vskip5pt

{\bf Step 2.} Now it remains to modify the value of $w_k$
on a neighborhood of $\partial Q_{\nu_0}$ in order to get an
admissible test function for the surface energy density. We argue as
in \cite[Lemma~5.2]{AEL}. Using the notations of
Subsection~\ref{sectsurf}, we consider $\g \in
\mathcal G(s_0^+,s_0^-)$, and set
$$\psi_k(x):=\g\left(\frac{x\cdot\nu_0}{\d_k} \right)\,.$$
Using a De Giorgi type slicing argument, we shall modify $w_k$ in
order to get a function which matches $\psi_k$ on $\partial
Q_{\nu_0}$. To this end, define
$$r_k:=\|w_k-\psi_k\|^{1/2}_{L^1(Q_{\nu_0};\Rb^d)}\,, \quad
M_k:=k[1+\|w_k\|_{W^{1,1}(Q_{\nu_0};\Rb^d)}
+\|\psi_k\|_{W^{1,1}(Q_{\nu_0};\Rb^d)}]\,, \quad
\ell_k:=\frac{r_k}{M_k}\,.$$
Since $\psi_k$ and $w_k$ converge to $u_0$ in
$L^1(Q_{\nu_0};\Rb^d)$, we have $r_k \to 0$, and one may assume that
$0<r_k<1$. Set
$$Q^{(i)}_k:=(1-r_k+i\, \ell_k) Q_{\nu_0} \quad \text{ for }i=0,\ldots,M_k\,.$$
For every $i \in \{1,\ldots,M_k\}$, consider a cut-off function
$\varphi^{(i)}_k \in \C^\infty_c(Q^{(i)}_k;[0,1])$ satisfying
$\varphi^{(i)}_k=1$ on $Q^{(i-1)}_k$ and $|\nabla
\varphi^{(i)}_k|\leq c/\ell_k$. Define
$$z^{(i)}_k:=\varphi^{(i)}_k
w_k + (1-\varphi^{(i)}_k)\psi_k\in W^{1,1}(Q_{\nu_0};\Rb^d)\,,$$ so
that $z_k^{(i)}=w_k$ in $Q^{(i-1)}_k$, and $z_k^{(i)}=\psi_k$ in
$Q_{\nu_0} \setminus Q^{(i)}_k$. Since $z^{(i)}_k$ is smooth outside
a finite union of sets contained in some $(N-2)$-dimensional
submanifolds and $z^{(i)}_k(x) \in {\rm co}(\M)$ for a.e. $x\in
Q_{\nu_0}$, one can apply Proposition \ref{proj} to obtain new
functions $\hat z^{(i)}_k \in W^{1,1}(Q_{\nu_0};\M)$ such that $\hat
z^{(i)}_k= z^{(i)}_k$ on $(Q_{\nu_0} \setminus Q^{(i)}_k) \cup
Q^{(i-1)}_k$, and
\begin{align*}
\int_{Q^{(i)}_k \setminus Q^{(i-1)}_k}|\nabla \hat z^{(i)}_k|\, dx
&\leq C_\star \int_{Q^{(i)}_k \setminus Q^{(i-1)}_k}|\nabla z^{(i)}_k|\,
dx\\
&\leq C_\star \int_{Q^{(i)}_k \setminus Q^{(i-1)}_k}\left(|\nabla
w_k|+|\nabla \psi_k| + \frac{1}{\ell_k}|w_k-\psi_k| \right)\,
dx\,.
\end{align*}
In particular $\hat z^{(i)}_k \in \mathcal
B_{\d_k}(s_0^+,s_0^-,\nu_0)$, and by the growth condition
(\ref{finfty1gc}),
\begin{multline*}
\int_{Q_{\nu_0}}f^\infty\left(\frac{x}{\d_k},\nabla \hat
z^{(i)}_k\right)dx \leq
\int_{Q_{\nu_0}}f^\infty\left(\frac{x}{\d_k},\nabla  w_k
\right)dx+C\int_{Q_{\nu_0} \setminus Q^{(i)}_k} |\nabla \psi_k|\, dx\,+\\
+C\int_{Q^{(i)}_k\setminus Q^{(i-1)}_k} \left(|\nabla w_k|+|\nabla
\psi_k| + \frac{1}{\ell_k}|w_k-\psi_k| \right)\, dx\,.
\end{multline*}
Summing up over all $i=1,\ldots,M_k$ and dividing by $M_k$, we get
that
\begin{multline*}
\frac{1}{M_k}\sum_{i=1}^{M_k}\int_{Q_{\nu_0}}f^\infty\left(\frac{x}{\d_k},\nabla
\hat z^{(i)}_k\right)dx  \leq
\int_{Q_{\nu_0}}f^\infty\left(\frac{x}{\d_k},\nabla  w_k
\right)dx\,+\\
+ C\int_{Q_{\nu_0} \setminus Q^{(0)}_k} |\nabla \psi_k|\, dx
+\frac{C}{k}+C\|w_k-\psi_k\|^{1/2}_{L^1(Q_{\nu_0};\Rb^d)}\,.
\end{multline*}
Since $$\int_{Q_{\nu_0} \setminus Q^{(0)}_k}|\nabla \psi_k|\, dx
\leq \mathbf d_{\M}(s_0^+,s_0^-) \HH^{N-1}((Q_{\nu_0} \setminus
Q^{(0)}_k) \cap \{x\cdot \nu_0=0\}) \to 0$$ as $k \to +\infty$,
there exists a sequence $\eta_k \to 0^+$ such that
$$\frac{1}{M_k}\sum_{i=1}^{M_k}\int_{Q_{\nu_0}}f^\infty\left(\frac{x}{\d_k},\nabla
\hat z^{(i)}_k\right)dx \leq
\int_{Q_{\nu_0}}f^\infty\left(\frac{x}{\d_k},\nabla  w_k \right)dx
+\eta_k\,.$$
Hence, for each $k \in \Nb$ we can find some index
$i_k \in \{1,\ldots,M_k\}$ satisfying
\begin{equation}\label{j11}
\int_{Q_{\nu_0}}f^\infty\left(\frac{x}{\d_k},\nabla \hat
z_k^{(i_k)}\right)dx \leq
\int_{Q_{\nu_0}}f^\infty\left(\frac{x}{\d_k},\nabla  w_k \right)dx
+\eta_k\,.
\end{equation}
Gathering (\ref{j1}) and (\ref{j11}), we obtain that
$$\frac{d\mu}{d\HH^{N-1}\res \, S_u}(x_0) \geq \limsup_{k \to
+\infty} \int_{Q_{\nu_0}}f^\infty\left(\frac{x}{\d_k},\nabla \hat
z_k^{(i_k)} \right)dx\,.$$
Since $\hat z_k^{(i_k)} \in \mathcal
B_{\d_k}(s_0^+,s_0^-,\nu_0)$, we infer from Proposition
\ref{limitsurfenerg}, Proposition \ref{limsurf2} and  \eqref{idsurfen} that
$$\frac{d\mu}{d\HH^{N-1}\res \, S_u}(x_0) \geq
\vartheta_\hom(s_0^+,s_0^-,\nu_0)\,,$$
which completes the proof.
\prbox

\subsection{Proof of Theorem \ref{babmil2}}

\noindent {\bf Proof of Theorem \ref{babmil2}. } In view of $(H_2)$
and the closure of the pointwise constraint under strong
$L^1$-convergence, $\F(u)<+\infty$ implies $u \in BV(\O;\M)$. In
view of \eqref{decompupbd},  Lemma \ref{upperboundBV},  Corollary
\ref{upbdjp} and Lemma~\ref{lowerboundBV}, the subsequence
$\{\F_{\e_{n}}\}$ $\Gamma$-converges to $\F_\hom$ in
$L^1(\O;\Rb^d)$. Since the $\G$-limit does not depend on the
particular choice of the subsequence, we get in light of
\cite[Proposition~8.3]{DM} that the whole sequence $\G$-converges.
\prbox

\section{Appendix}
\vskip5pt

\noindent We present in this appendix a relaxation result already proved in
\cite{AEL} for $\M=\Sd$, and in \cite{Mucci} for isotropic
integrands. The proof can be obtained
following the one of \cite[Theorem~3.1]{AEL}
replacing the standard projection on the
sphere (used in Lemma 5.2, Proposition~6.2 and Lemma~6.4 of
\cite{AEL}) by the projection on $\M$ of \cite{HL} as in Proposition
\ref{proj}. Since we only make use of the upper bound on the diffuse part,
we will just enlight the differences in the main steps leading to it.
\vskip5pt

Assume that $\M$ is a smooth, compact and connected submanifold of
$\Rb^d$ without boundary, and let $f : \O \times \Rb^d \times \Rb^{d
\times N} \to [0,+\infty)$ be a continous function satisfying:

\begin{itemize}
\item[$(H_1')$] $f$ is tangentially quasiconvex, {\it i.e.}, for all $x \in \O$, all $s \in \M$ and all $\xi \in [T_s(\M)]^N$,
$$f(x,s,\xi) \leq \int_Q f(x,s,\xi + \nabla \varphi(y))\, dy \quad \text{for every $\varphi \in W^{1,\infty}_0(Q;T_s(\M))\,$;}$$

\item[$(H_2')$] there exist $\a>0$ and $\b>0$ such that
$$\a |\xi| \leq f(x,s,\xi) \leq \b(1+|\xi|) \quad \text{ for every
}(x,s,\xi) \in \O \times \Rb^d \times \Rb^{d \times N}\,;$$

\item[$(H_3')$] for every compact set $K \subset \O$, there exists a
continuous function $\omega : [0,+\infty) \to [0,+\infty)$
satisfying $\omega(0)=0$ and
$$|f(x,s,\xi) - f(x',s',\xi)| \leq \omega(|x-x'| + |s-s'|)
(1+|\xi|)$$
for every $x$, $x' \in \O$, $s$, $s' \in \Rb^d$ and
$\xi \in \Rb^{d \times N}$;

\item[$(H_4')$] there exist $C>0$ and $q \in (0,1)$ such that
$$|f(x,s,\xi) - f^\infty(x,s,\xi)| \leq C(1+|\xi|^{1-q}), \quad \text{ for every
}(x,s,\xi) \in \O \times \Rb^d \times \Rb^{d \times N}\,,$$ where
$f^\infty : \O \times \Rb^d \times \Rb^{d \times N} \to [0,+\infty)$
is the recession function of $f$ defined by
$$f^\infty(x,s,\xi):=\limsup_{t \to +\infty} \frac{f(x,s,t\xi)}{t}\,
.$$
\end{itemize}

Consider the functional $F:L^1(\O;\Rb^d) \to [0,+\infty]$ given by
$$F(u):=\left\{
\begin{array}{ll}
\ds \int_\O f(x,u,\nabla u)\, dx & \text{ if }u \in
W^{1,1}(\O;\M),\\[0.4cm]
+\infty & \text{ otherwise}, \end{array}\right.$$ and its relaxation
for the strong $L^1(\O;\Rb^d)$-topology $\overline F:L^1(\O;\Rb^d)
\to [0,+\infty]$ defined by
$$\overline F(u):=\inf_{\{u_n\}} \left\{ \liminf_{n \to +\infty}
F(u_n) : u_n \to u \text{ in }L^1(\O;\Rb^d)\right\}\,.$$ Then the
following integral representation result holds:

\begin{theorem} \label{relax}
Let $\M$ be a smooth compact and connected submanifold of $\Rb^d$
without boundary, and let $f:\Rb^N \times \Rb^{d \times N} \to
[0,+\infty)$ be a continuous function satisfying $(H_1')$ to
$(H_4')$. Then for every $u \in L^1(\O;\Rb^d)$,
\begin{equation}\label{reprel}
\overline F(u)= \begin{cases} \ds
\begin{multlined}[8.5cm]
\,\int_\O f(x,u,\nabla u)dx +
\int_{\O\cap S_u}K(x,u^+,u^-,\nu_u)d\HH^{N-1}\,+ \\[-15pt]
+ \int_\O f^\infty\bigg(x,\tilde u,\frac{dD^cu}{d|D^cu|}\bigg)\,
d|D^cu|
\end{multlined}
& \text{\it if }\,u \in BV(\O;\M)\,,\\
& \\
\,+\infty & \text{\it otherwise}\,,
\end{cases}
\end{equation}
where for every $(x,a,b,\nu) \in \O \times \M \times \M \times \SN$,
\begin{multline*}
K(x,a,b,\nu) := \inf_\varphi \bigg\{\int_{Q_\nu}
f^\infty(x,\varphi(y),\nabla \varphi(y))\, dy : \varphi \in
W^{1,1}(Q_\nu;\M),\; \varphi=a \text{ on } \{x\cdot \nu=1/2\},\\
\varphi=b \text{ on }\{x\cdot \nu=-1/2\} \text{ {\it and} } \varphi \text{
\it is $1$-periodic in the }\nu_2,\ldots,\nu_{N} \text{ directions}
\bigg\}\,,
\end{multline*}
$\{\nu,\nu_2,\ldots,\nu_N\}$ forms any orthonormal basis of $\Rb^N$,
and $Q_\nu$ stands for the open unit cube in $\Rb^N$ centered at the
origin associated to this basis.
\end{theorem}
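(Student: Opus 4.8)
The plan is to reproduce the argument of \cite[Theorem~3.1]{AEL} (itself modelled on the blow-up technique for relaxation of linear-growth functionals), whose one genuinely constrained ingredient is the nearest-point projection onto the sphere: wherever it occurs — in the finiteness estimate, in the blow-up constructions for the bulk and Cantor densities, and in the slicing arguments for the jump density (Lemma~5.2, Proposition~6.2 and Lemma~6.4 of \cite{AEL}) — I would substitute the Hardt--Lin type projection of Proposition \ref{proj}. The first step is to localize: for $A\in\A(\O)$ and $u\in L^1(\O;\Rb^d)$ set $\overline F(u,A):=\inf\{\liminf_n\int_A f(x,u_n,\nabla u_n)\,dx:u_n\in W^{1,1}(A;\M),\ u_n\to u\text{ in }L^1(A;\Rb^d)\}$. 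Arguing exactly as in the proof of Lemma \ref{measbis} — approximating $u\in BV(\O;\M)$ by smooth ${\rm co}(\M)$-valued maps in the strict topology (Theorem~3.9 of \cite{AFP}) and projecting them onto $\M$ by Proposition \ref{proj} — one gets $\overline F(u,A)\le\beta\big(\LL^N(A)+C_\star|Du|(A)\big)$, and a nested-sets subadditivity estimate (cut-off between two competitors across a thin annulus, then project by Proposition \ref{proj}) together with the De Giorgi--Letta criterion shows that $\overline F(u,\cdot)$ is, for $u\in BV(\O;\M)$, the trace on $\A(\O)$ of a Radon measure absolutely continuous with respect to $\LL^N+|Du|$; in particular it splits as $g_a\,\LL^N+g_c\,|D^cu|+g_j\,\HH^{N-1}\res\, S_u$ for nonnegative Borel densities $g_a,g_c,g_j$.

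For the lower bound, $\overline F(u,\O)\ge$ (right-hand side of \eqref{reprel}) amounts to $g_a(x_0)\ge f(x_0,u(x_0),\nabla u(x_0))$ for $\LL^N$-a.e.\ $x_0$, $g_c(x_0)\ge f^\infty(x_0,\tilde u(x_0),\frac{dD^cu}{d|D^cu|}(x_0))$ for $|D^cu|$-a.e.\ $x_0$, and $g_j(x_0)\ge K(x_0,u^+(x_0),u^-(x_0),\nu_u(x_0))$ for $\HH^{N-1}$-a.e.\ $x_0\in S_u$. The first follows by blowing up at a Lebesgue point, freezing $x$ and $s$ with the error absorbed by $(H_3')$, and invoking tangential quasiconvexity $(H_1')$, as in \cite[Lemma~5.2]{BM}. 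The second is obtained by repeating almost verbatim the three-step truncation argument in the proof of \eqref{lambda^c} above, with $f$ in place of $g$ and with $(H_4')$ used to trade $f$ for $f^\infty$. The third is obtained by repeating the blow-up plus De Giorgi slicing plus projection argument of the proof of \eqref{lambda^j}: the slicing produces a map in $W^{1,1}(Q_{\nu_0};\M)$ which, near $\partial Q_{\nu_0}$, equals the frozen one-dimensional transition and is periodic in the tangential directions, hence an admissible competitor in the cell formula defining $K$.

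For the upper bound — the only part actually invoked, through Lemma \ref{upperboundBV} — it suffices, by the measure decomposition above, to prove the matching upper estimates on $g_a$, $g_c$, $g_j$. The bulk estimate $g_a(x_0)\le f(x_0,u(x_0),\nabla u(x_0))$ comes from the standard blow-up construction: near a Lebesgue point $x_0$, with $s_0:=u(x_0)$ and $\xi_0:=\nabla u(x_0)\in[T_{s_0}(\M)]^N$, compose the affine map $x\mapsto s_0+\xi_0(x-x_0)$ (which stays in a tubular neighbourhood of $\M$ on small cubes) with the smooth nearest-point retraction $\Pi$ onto $\M$, and glue to $u$ across a thin layer near $\partial Q(x_0,\rho)$; since $d\Pi_{s_0}$ restricts to the identity on $T_{s_0}(\M)$, the gradient of the composition tends to $\xi_0$, whence the bulk energy is $(f(x_0,s_0,\xi_0)+o(1))\rho^N$ by $(H_3')$ and the layer contributes $o(\rho^N)$. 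The Cantor estimate is analogous, exploiting the one-dimensional (rank-one) structure of the Cantor blow-up and $(H_4')$ to replace $f$ by $f^\infty$. For the jump estimate $g_j(x_0)\le K(x_0,u^+,u^-,\nu_u)$ one takes a near-optimal competitor in the cell formula for $K$, rescales and periodizes it, glues it to $u^\pm$ across a thin transition layer, and applies Proposition \ref{proj} to the resulting ${\rm co}(\M)$-valued map. Summing the three pointwise estimates against the respective measures gives $\overline F(u,\O)\le$ (right-hand side of \eqref{reprel}).

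The main obstacle is the energetic bookkeeping in the gluing and projection steps. Proposition \ref{proj} controls only $\|\nabla w\|_{L^1}$ of the projected map $w$, not $f(x,w,\nabla w)$ pointwise, and on the set $B:=\{v\notin\M\}$ (where $v$ denotes the pre-projection map) $w$ is {\it a priori} an uncontrolled $\M$-valued map; one closes the argument by arranging, using that the ambient target $u$ is itself $\M$-valued, that $\LL^N(B)$ and $\int_B|\nabla v|\,dx$ be as small as wished, so that $\int_B f(x,w,\nabla w)\,dx\le\beta\big(\LL^N(B)+C_\star\int_B|\nabla v|\,dx\big)$ is negligible by $(H_2')$, while on the complement $w=v$ and the errors coming from freezing $x$ and $s$ are controlled by $(H_3')$. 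Together with the lower bound of the second paragraph, this establishes the representation \eqref{reprel}.
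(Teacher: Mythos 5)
Your proposal identifies the correct key tool (the Hardt--Lin averaged projection of Proposition \ref{proj}) and the correct structural skeleton (localize, obtain a Radon measure absolutely continuous w.r.t.\ $\LL^N+|Du|$, then prove pointwise density inequalities), and it correctly flags the central technical issue --- that Proposition \ref{proj} only controls the $L^1$-norm of the gradient, so the region where the pre-projection map leaves $\M$ must be shown to carry negligible energy. The lower-bound part of your sketch (blow-up, freeze, tangential quasiconvexity, and the truncation argument transposed from the proof of \eqref{lambda^c}) matches the paper's route.

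Where you diverge, and where there is a gap, is in the \emph{upper bound}. For \eqref{contpartrel} you propose to build the competitor by composing the affine map $a(x)=s_0+\xi_0(x-x_0)$ with the nearest-point retraction $\Pi$ and gluing to $u$ across a thin layer. But $\Pi\circ a$ is a single fixed function on $Q(x_0,\rho)$ which does \emph{not} converge to $u$ in $L^1(Q(x_0,\rho);\Rb^d)$ as any parameter is sent to its limit: it is only close to $u$ in mean. Gluing to $u$ ``across a thin layer'' cannot fix this, because (i) $u\in BV$, not $W^{1,1}$, so it is not itself an admissible competitor on the annulus, and (ii) if you instead glue to a recovery sequence $u_n\to u$, the resulting sequence converges to a function that equals $\Pi\circ a$ on the inner cube, not to $u$. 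Thus the glued sequence is not admissible for $\overline F(u,Q(x_0,\rho))$ and your asserted bound ``$(f(x_0,s_0,\xi_0)+o(1))\rho^N$ and the layer contributes $o(\rho^N)$'' is not established. The paper avoids this by dispensing with the affine ansatz and with gluing altogether: it takes as competitor the mollification $u_n=\varrho_n*u$ composed with the Hardt--Lin projection, $w_n^k=p_n^k\circ u_n$, which does converge to $u$ in $L^1(Q(x_0,\rho_k);\Rb^d)$, and then uses $(H_3')$ together with the quantitative bounds \eqref{Ank}, \eqref{dn}, \eqref{cAnk}, \eqref{2numb}, \eqref{3} to freeze the $(x,s)$-variables and compare $\nabla w_n^k$ with $\nabla u_n$. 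The same comment applies, mutatis mutandis, to your Cantor-part competitor.

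For \eqref{jppartrel} your plan (rescale a near-optimal cell competitor for $K$, periodize, glue, project) is a direct blow-up construction at a single jump point; even granting the gluing (which has the same admissibility problem as above, now with respect to the jump blow-up limit $u_0$ rather than $u$), it would at best give the pointwise estimate $g_j(x_0)\le K(x_0,u^+,u^-,\nu_u)$. The paper instead follows \cite[Lemma~6.5]{AEL}: first establish the estimate when $u$ takes finitely many values via the abstract representation on partitions (the analogue of Proposition \ref{reppart}), and then pass to general $u\in BV(\O;\M)$ by the AMT-type approximation argument of Corollary \ref{upbdjp}, using the Lipschitz-type regularity of $K$. This discretization step is absent from your proposal, and the direct construction alone does not give the global inequality $\overline F(u,\O\cap S_u)\le\int_{\O\cap S_u}K\,d\HH^{N-1}$ without additional measure-theoretic work.
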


\noindent{\bf Sketch of the Proof.} The proof of the lower bound
``$\geq$" in \eqref{reprel} can be obtained as in \cite[Lemma~5.2]{BM} and Lemma \ref{lowerboundBV} using standard techniques to
handle with the dependence on the space variable. The lower bounds
for the bulk and Cantor parts rely on the construction of a suitable
function $\tilde f: \O \times \Rb^d \times \Rb^{d \times N} \to
[0,+\infty)$ replacing $f$ as we already pursued in Section
\ref{thbe}. On the other hand, the jump part rests on the projection
on $\M$ of \cite{HL} as in Proposition \ref{proj} instead of the
standard projection on the sphere used in
\cite[Proposition~5.2]{AEL}. \vskip5pt

\noindent
To obtain the upper bound, we localize as usual  the functionals setting for every $u \in
L^1(\O;\Rb^d)$ and $A \in \A(\O)$,
$$F(u,A):=\begin{cases}
\ds \int_A f(x,u,\nabla u)\, dx & \text{ if }u \in
W^{1,1}(A;\M)\,,\\
+\infty & \text{ otherwise}\,,
\end{cases}$$
$$\overline F(u,A):=\inf_{\{u_n\}} \left\{ \liminf_{n \to +\infty}
F(u_n,A) : u_n \to u \text{ in }L^1(A;\Rb^d)\right\}\,.$$
Arguing as in the proof of Lemma \ref{measbis}, we obtain that for every $u \in BV(\O;\M)$, the set function
$\overline F(u,\cdot)$ is the restriction to $\A(\O)$ of a Radon
measure absolutely continuous with respect to $\LL^N+|Du|$. Hence it uniquely extends into a Radon measure on $\Omega$ (see Remark \ref{measconstr}),
and it suffices to prove that for any $u\in BV(\O;\M)$,
\begin{equation}\label{jppartrel}
\overline F(u,\Omega\cap S_u)\leq \int_{\Omega \cap S_u}K(x,u^+,u^-,\nu_u)\, d\HH^{N-1}\,,
\end{equation}
\begin{equation}\label{contpartrel}
\frac{d\overline F(u,\cdot)}{d \LL^N}(x_0)\leq f(x_0,u(x_0),\nabla u(x_0))\quad \text{for $\LL^N$-a.e. $x_0\in \Omega$}\,,
\end{equation}
\begin{equation}\label{cantpartrel}
\frac{d\overline F(u,\cdot)}{d |D^cu|}(x_0)\leq f^\infty\bigg(x_0,\tilde u(x_0),\frac{dD^c u}{d|D^cu|}(x_0)\bigg)\quad \text{for $|D^cu|$-a.e. $x_0\in \Omega$}\,,
\end{equation}
\vskip5pt

\noindent{\it Proof of \eqref{jppartrel}.} Concerning the jump part, one can proceed  as in \cite[Lemma~6.5]{AEL}.
A slight difference lies in the third step of its proof where one needs to approximate in energy
an arbitrary  $u\in BV(\O;\M)$ by a sequence $\{u_n\}\subset BV(\O;\M)$ such that for each $n$, $u_n$ assumes a finite number of values.
This can be performed as in the proof of Corollary \ref{upbdjp} using the regularity properties of $K$ stated in \cite[Lemma~4.1]{AEL} for $\M=\mathbb{S}^{d-1}$.
\vskip5pt

\noindent{\it Proof of \eqref{contpartrel}.} Let $x_0 \in \O$ be a Lebesgue
point for $u$ and $\nabla u$ such that $u(x_0) \in \M$, $\nabla u(x_0)
\in [T_{u(x_0)}(\M)]^N$,
$$\lim_{\rho \to 0^+} \med_{Q(x_0,\rho)} |u(x) - u(x_0)|(1+|\nabla
u(x)|)\, dx=0\,,\quad \lim_{\rho \to 0^+}\frac{|D^s
u|(Q(x_0,\rho))}{\rho^N}=0\,,$$ and
$$\frac{d |Du|}{d\LL^N}(x_0) \quad \text{ and }\quad \frac{d\overline
F(u,\cdot)}{d\LL^N}(x_0)$$ exist and are finite. Note that
$\LL^N$-a.e. $x_0 \in \O$ satisfy these properties. We select a sequence $\rho_k
\searrow 0^+$  such that $Q(x_0,2\rho_k)\subset \O$ and $|Du|(\partial Q(x_0,\rho_k)) =0$ for
each $k \in \Nb$. Next consider a sequence of standard mollifiers
$\{\varrho_n\}$, and  define $u_n :=\varrho_n * u \in
W^{1,1}(Q(x_0,\rho_k);\Rb^d) \cap \C^\infty(Q(x_0,\rho_k);\Rb^d)$. In the sequel, we shall argue
as in the proof of Proposition \ref{proj} and we refer to it for the notation. Fix $\d>0$ small
enough such that $\pi:\Rb^d\setminus X\to \M$ is smooth
in the $\d$-neighborhood of $\M$. 
Since $u_n$ takes its values in ${\rm co}(\M)$, we can reproduce the proof of Proposition \ref{proj}  to find $a_n^k \in\Rb^d$ with $|a_n^k|<\delta/4$
such that setting $p_n^k:=(\pi_{a_n^k}|_{\M})^{-1}
\circ \pi_{a_n^k}$, $w_n^k:=p_n^k\circ u_n \in W^{1,1}(Q(x_0,\rho_k);\M)$ and
\begin{equation}\label{Ank}
\int_{A_n^k} |\nabla w_n^k|\, dx \leq C_* \int_{A_n^k}|\nabla u_n|\,
dx\,,
\end{equation}
where $A_n^k$ denotes the open set $A_n^k:=\big\{x \in Q(x_0,\rho_k) : {\rm dist}(u_n(x),\M) >\d /2\big\}$. 
Furthermore, since $ \pi$ is smooth in the $\d$-neighborhood
of $\M$ and $|a_n^k|<\d/4$,
there exists a constant $C_\d>0$ independent
of $n$ and $k$ such that
\begin{equation}\label{dn}
|\nabla^2 p_n^k(s)|+|\nabla p_n^k(s)| \leq C_\d \text{ for every $s \in \Rb^d$ satisfying
$\dist(s,\M)\leq \d/2$}\,,
\end{equation}
and consequently,
\begin{equation}\label{cAnk}
|\nabla w_n^k| \leq C_\d |\nabla u_n| \quad \text{$\LL^N$-a.e. in
}Q(x_0,\rho_k) \setminus A_n^k\,.
\end{equation}
Since $u(x) \in \M$ for $\LL^N$-a.e. $x \in \O$, it follows that
$$\LL^N(A_n^k) \leq
\frac{2}{\d} \int_{Q(x_0,\rho_k)} \dist(u_n,\M)\, dx \leq
\frac{2}{\d} \int_{Q(x_0,\rho_k)} |u_n-u|\, dx \xrightarrow[n \to
+\infty]{} 0\,,$$
and then (\ref{dn}) yields
\begin{multline*}
\int_{Q(x_0,\rho_k)}|w_n^k - u|\, dx  =  \int_{
A_n^k}|w_n^k - u|\, dx + \int_{Q(x_0,\rho_k)
\setminus A_n^k}|p_n^k(u_n) - p_n^k(u)|\, dx\leq\\
 \leq  {\rm diam}(\M) \LL^N(A_n^k) + C_\d
\int_{Q(x_0,\rho_k)}|u_n-u|\, dx \xrightarrow[n \to +\infty]{} 0\,.
\end{multline*}
Hence $w_n^k \to u$ in $L^1(Q(x_0,\rho_k);\Rb^d)$ as $n \to +\infty$
so that we are allowed to take $w_n^k$ as competitor, {\it i.e.},
$$\overline F(u,Q(x_0,\rho_k)) \leq \liminf_{n \to +\infty}
\int_{Q(x_0,\rho_k)}f(x,w_n^k,\nabla w_n^k)\, dx\,.$$
At this stage we can argue exactly
as in \cite[Lemma~6.4]{AEL} to prove that for any $\eta>0$
there exists $\lambda=\lambda(\eta)>0$ such that
\begin{multline}\label{1numb}
\overline F(u,Q(x_0,\rho_k))  \leq  \liminf_{n \to
+\infty}\bigg\{\int_{Q(x_0,\rho_k)}f(x_0,u(x_0),\nabla u_n)\,
dx+C\int_{Q(x_0,\rho_k)}|\nabla u_n - \nabla w_n^k|\, dx\, +\\
+ C(\eta +\lambda \rho_k) \int_{Q(x_0,\rho_k)}(1+|\nabla u_n|)\, dx
+C\lambda\int_{Q(x_0,\rho_k)}|w_n^k-u(x_0)|(1+|\nabla
w_n^k|)\, dx \bigg\}\,.
\end{multline}
The first and third term in the right handside of \eqref{1numb} can be treated as in the proof of \cite[Theorem~2.16]{FM2}. 
Concerning the remaining terms, we proceed as follows. 
Using
(\ref{Ank}), (\ref{dn}) and (\ref{cAnk}), we get that
\begin{multline}\label{2numb}
\int_{Q(x_0,\rho_k)}|w_n^k-u(x_0)||\nabla w_n^k|\, dx
\leq{\rm diam}(\M) \int_{A_n^k}|\nabla w_n^k|\, dx\,+\\
+ \int_{Q(x_0,\rho_k)
\setminus A_n^k}|p_n^k(u_n) - p_n^k(u(x_0))| |\nabla w_n^k| \, dx
 \leq C\int_{A_n^k}|\nabla u_n|\, dx\,+\\
+ C_\d \int_{Q(x_0,\rho_k) \setminus A_n^k}|u_n - u(x_0)| |\nabla
u_n|
\,dx 
\leq C_\d \int_{Q(x_0,\rho_k)}|u_n - u(x_0)| |\nabla u_n|\, dx\,,
\end{multline}
where  $C_\d>0$ still denotes some constant depending on $\d$ but independent of $k$ and $n$. Arguing
in a similar way, we also derive
\begin{equation}\label{3}
\int_{Q(x_0,\rho_k)}|\nabla u_n - \nabla w_n^k|\, dx \leq C_\delta
\int_{Q(x_0,\rho_k)} |u_n -u(x_0)| |\nabla u_n|\, dx+
\int_{Q(x_0,\rho_k) \setminus A_n^k} |L_n^k \nabla u_n|\, dx\,,
\end{equation}
where $L_n^k:={\rm Id} - \nabla p_n^k(u(x_0)) \in {\rm
Lin}(\Rb^{d\times d},\Rb^{d \times d})$. 
Gathering \eqref{1numb}, \eqref{2numb} and (\ref{3}) we finally
obtain that
\begin{multline}\label{4}
\overline F(u,Q(x_0,\rho_k))  \leq  \liminf_{n \to
+\infty}\bigg\{\int_{Q(x_0,\rho_k)}f(x_0,u(x_0),\nabla u_n)\,
dx+C\int_{Q(x_0,\rho_k) \setminus A_n^k} |L_n^k \nabla u_n|\, dx\,+\\
+C(\eta +\lambda \rho_k) \int_{Q(x_0,\rho_k)}(1+|\nabla u_n|)\, dx
+C_\d\lambda\int_{Q(x_0,\rho_k)}|u_n-u(x_0)|(1+|\nabla u_n|)\,
dx \bigg\}\,.
\end{multline}
Now we can follow the argument in  \cite[Lemma~6.4]{AEL} to conclude  that
$$\frac{d\overline F(u,\cdot)}{d\LL^N}(x_0) \leq f(x_0,u(x_0),\nabla
u(x_0))\,,$$
which completes the proof of \eqref{contpartrel}.
\vskip5pt

\noindent {\it Proof of \eqref{cantpartrel}.} Once again the proof
parallels the one in \cite[Lemma~6.4]{AEL}. We first proceed as in
the previous reasoning leading to \eqref{4}. Then we can exactly
follow the argument of \cite[Lemma~6.4]{AEL} to obtain
\eqref{cantpartrel}. \prbox

\vskip15pt

\noindent{\bf Acknowledgement. }The authors wish to thank Roberto
Alicandro, Pierre Bousquet, Giovanni Leoni and Domenico Mucci for
several interesting discussions on the subject. This work was initiated while
V. Millot was visiting the department of {\it Functional Analysis and Applications} at S.I.S.S.A.,
he thanks G. Dal Maso  and the whole department for
the warm hospitality. The research of
J.-F. Babadjian was partially supported by the Marie Curie Research
Training Network MRTN-CT-2004-505226 ``Multi-scale modelling and
characterisation for phase transformations in advanced materials''
(MULTIMAT). V. Millot was partially supported by  the Center for
Nonlinear Analysis (CNA) under the National Science Fundation Grant
No. 0405343.

\end{document}